\documentclass[11pt,twoside]{amsart}
\usepackage{amsmath,amscd,amsthm,amssymb,amsxtra,latexsym,epsfig,epic,graphics,mathdots,amssymb,enumitem}
\usepackage{stmaryrd}
\usepackage{MnSymbol}
\usepackage[all]{xy}
\usepackage[utf8]{inputenc}
\usepackage[usenames,dvipsnames]{xcolor}
\usepackage{tcolorbox}
\tcbuselibrary{skins,listings,theorems}
\usepackage{hyperref}
\newcommand{\inter}[1]{\llbracket {#1}\rrbracket}
\usepackage{eucal}
\textwidth=15.50cm \textheight=20cm \topmargin=0.00cm
\oddsidemargin=0.00cm \evensidemargin=0.00cm \headheight=0cm
\headsep=1cm
\headsep=0.5cm 
\usepackage[sans]{dsfont}
\usepackage{graphicx,color}

\newtheorem{MainTheorem}{Theorem}

\newtheorem{MainCorollary}[MainTheorem]{Corollary}

\newtheorem{Theorem}{Theorem}[section]
\newtheorem{Lemma}[Theorem]{Lemma}
\newtheorem{Proposition}[Theorem]{Proposition}

\theoremstyle{definition}
\newtheorem{Definition}[Theorem]{Definition}

\newtheorem{Remark}[Theorem]{Remark}

\newtheorem{Example}[Theorem]{Example}


\newcommand{\ZZ}{{\mathds Z}}
\newcommand{\NN}{{\mathds N}}
\newcommand{\MM}{M}

\newcommand{\PP}{{\mathds P}}

\newcommand{\VV}{{\mathds V}}
\newcommand{\AAA}{{\mathbf{A}}}
\newcommand{\III}{{\mathds 1}}


\newcommand{\cE}{{\mathcal E}}

\newcommand{\cN}{{\mathcal N}}
\newcommand{\cL}{{\mathcal L}}
\newcommand{\cQ}{{\mathcal Q}}
\newcommand{\cK}{{\mathcal K}}
\newcommand{\cF}{{\mathcal F}}
\newcommand{\cI}{{\mathcal I}}
\newcommand{\cJ}{{\mathcal J}}

\newcommand{\QQ}{{\mathcal Q}}

\newcommand{\cT}{{\mathcal T}}

\newcommand{\cO}{{\mathcal O}}
\newcommand{\cV}{{\mathcal V}}
\newcommand{\rH}{{\mathrm H}}
\newcommand{\rM}{{\mathrm M}}
\newcommand{\rh}{{\mathrm h}}

\newcommand{\ssl}{{\mathfrak {sl}}}
\newcommand{\fe}{{\mathfrak {e}}}
\newcommand{\fm}{{\mathfrak {m}}}
\newcommand{\fg}{{\mathfrak {g}}}
\newcommand{\fh}{{\mathfrak {h}}}
\newcommand{\fl}{{\mathfrak {l}}}
\newcommand{\fM}{{\mathfrak {M}}}
\newcommand{\fH}{{\mathfrak {H}}}
\newcommand{\fD}{{\mathfrak {D}}}

\newcommand{\sat}{{\mathrm{sat}}}
\newcommand{\sing}{{\mathrm{sing}}}
\newcommand{\cha}{{\mathrm{char}}}
\newcommand{\supp}{{\mathrm{supp}}}
\newcommand{\SL}{\mathrm{SL}}
\newcommand{\GL}{\mathrm{GL}}
\newcommand{\PGL}{\mathrm{PGL}}
\newcommand{\diag}{\mathrm{diag}}
\newcommand{\kk}{{\mathds k}}
\newcommand{\Pic}{\operatorname{Pic}}
\newcommand{\Cl}{\operatorname{Cl}}
\newcommand{\rk}{\mathrm{rk}}
\newcommand{\gr}{\operatorname{gr}}
\newcommand{\trace}{\operatorname{tr}}
\newcommand{\Hom}{\operatorname{Hom}}
\newcommand{\cHom}{\mathcal{H}om}
\newcommand{\End}{\operatorname{End}}
\newcommand{\Ext}{\operatorname{Ext}}

\newcommand{\coker}{\operatorname{coker}}

\newcommand{\Proj}{\operatorname{Proj}}

\newcommand{\ra}{{\boldsymbol{\mathrm{a}}}}
\newcommand{\rb}{{\boldsymbol{\mathrm{b}}}}
\newcommand{\bg}{\boldsymbol{\mathrm{g}}}
\newcommand{\bh}{\boldsymbol{\mathrm{h}}}
\newcommand{\rdet}{\mathbf{det}}
\newcommand{\rsyz}{\mathbf{syz}}
\newcommand{\udet}{\underline{{\mathbf{det}}}}
\newcommand{\rf}{{\boldsymbol{\mathrm{f}}}}
\newcommand{\ri}{{\boldsymbol{\mathrm{i}}}}
\newcommand{\la}{\operatorname{\leftarrow}}
\newcommand{\id}{\operatorname{id}}

\newcommand{\tra}{{\operatorname{t}}}

\begin{document}
\sloppy

\title[On stability of logarithmic tangent sheaves]{On stability of
  logarithmic tangent sheaves. \\ Symmetric and generic determinants}

\author[D. Faenzi, S. Marchesi]{Daniele Faenzi, Simone Marchesi}
\thanks{D.F.
  partially supported by ISITE-BFC ANR-lS-IDEX-OOOB,
  ANR Fano-HK ANR-20-CE40-0023,
 CAPES-COFECUB project Ma 926/19 and EIPHI ANR-17-EURE-0002}

\address{Daniele Faenzi,
  Institut de Mathématiques de Bourgogne,
  Universit\'e de Bourgogne et Franche-Comté,
  9 avenue Alain Savary -- BP 47870,
  21078 Dijon Cedex, France.
} \email{daniele.faenzi@u-bourgogne.fr}

\address{Simone Marchesi,
Facultat de Matemàtiques i Informàtica, Universitat de Barcelona, Gran
Via de les Corts Catalanes 585 -- 08007 Barcelona, Spain.
} \email{marchesi@ub.edu}

\keywords{Logarithmic tangent, stability of sheaves, moduli space of
  semistable sheaves, isolated singularities, determinant}

\subjclass[2010]{Primary 14J60, 14J17, 14M12, 14C05}

\begin{abstract}
We prove stability of logarithmic tangent sheaves of singular
hypersurfaces $D$ of the projective space with constraints on the
dimension and degree of the singularities of $D$.
As main application, we prove that
determinants and symmetric determinants have simple (in
characteristic zero, stable) logarithmic
tangent sheaves
and we describe an open dense piece of the associated moduli space.
\end{abstract}

\maketitle

\section*{Introduction}

Given a hypersurface $D \subset \PP^N$ defined by a homogeneous form
$F$ of degree $d$ over a field $\kk$, the vector fields on
$\PP^N$ which are tangent to $D$ define the logarithmic tangent sheaf $\cT_D$
of $D$. This sheaf is the first syzygy of the Jacobian ideal sheaf $\cJ_D$ of $D$,
as the partial derivatives $\nabla (F)$ of $F$, that is, the generators of the Jacobian ideal $J_D$,
express it as the kernel of the
Jacobian matrix of $F$:
\begin{equation}\label{seq-def1}
0 \rightarrow \cT_D \rightarrow (N+1).\cO_{\PP^N} \xrightarrow{\nabla (F)}
\cO_{\PP^N}(d-1).
\end{equation}

If the characteristic of $\kk$ does not divide
$d$, the sheaf $\cT_D(1)$ is a subsheaf of $\cT_{\PP^N}$, usually
denoted by $\cT_{\PP^N}\langle D \rangle$, and the quotient of
$\cT_{\PP^N}$ by $\cT_{\PP^N}\langle D \rangle$ is the equisingular normal sheaf of $D$.
The sheaf $\cT_{\PP^N}\langle D \rangle$, or rather its dual, often denoted by
$\Omega_{\PP^N}(\log D)$, was studied in \cite{deligne:equations} and
\cite{saito:logarithmic} in connection with Hodge theory.
All these sheaves play a major role in the
deformation theory of the embedding $D\hookrightarrow \PP^N$, see
\cite[Section 3.4]{sernesi:deformations}. 
The graded module of global sections of $\cT_D$, which we denote by
$T_D$, is called the module of logarithmic derivations, or of Jacobian
syzygies of $F$.
It has been also studied in detail, most notably for hyperplane arrangements, see for
instance \cite{orlik-terao:arrangements-hyperplanes}.

For some noteworthy classes of hypersurface singularities the
logarithmic tangent sheaf is locally free, and this plays an important
role in the theory of discriminants and unfolding 
of singularities, see for instance \cite{buchweitz-ebeling-von_bothmer}.
For some remarkable classes of divisor, the module $T_D$ is itself
free, see for instance \cite{terao},
so that $\cT_D$ splits as a direct sum of line bundles.

In contrast to this, for some interesting classes of hypersurfaces the
sheaf $\cT_D$ is stable. This happens for instance for generic
arrangements of at least $N+2$ hyperplanes
\cite{dolgachev-kapranov:arrangements}, but also for many highly
non-generic arrangements
see \cite{faenzi-matei-valles,abe-faenzi-valles}.
Recall that $\cT_D$ is stable when $D$ is smooth and, moreover, the stability of $\cT_D$ for hypersurfaces with isolated singularities was
studied in \cite{dimca:jacobian-syzygies}, in connection with the
Torelli problem, on whether $D$ can be reconstructed from $\cT_D$.
Stability of $\cT_D$ is a fundamental preliminary step to connect the
study of equisingular deformations of $D$ to moduli problems of
sheaves over $\PP^N$.
However, few methods for proving stability of $\cT_D$ are
available today, indeed very little seems to be
known besides the case of arrangements and 
isolated singularities of curves and surfaces.

\medskip

The goal of this paper is to propose some tools to prove
stability in a wide range of situations.
The general strategy is find a suitable
closed subvariety $X \subset \PP^N$ where we may prove that the
restriction of $\cT_D$ is stable and then argue that this implies
stability of $\cT_D$ itself.

The first possibility to explore is to take $X$ to be a linear space
disjoint from the singular locus $\sing(D)$ of $D$.
In the first part of this paper (see \S \ref{sec-lowdim}) we show that
$\cT_D|_X$ is 
stable provided some 
 vanishing of global sections of the reflexive
hulls of exterior powers of $\cT_D$
in terms of the codimension of singularities of $D$.
More specifically, setting $s=\dim(\sing(D))$ and assuming $s \le N-2$, we obtain the following result.

\begin{MainTheorem} \label{stabileH0}
  Assume that for all integers $p$ with $1 \le p \le s+1$ we have:
  \[
    \rH^0\left(\wedge^p \cT_D(q)^{**} \right)=0,
      \qquad \mbox{with:} \qquad
      q=\left\lfloor {\frac{(d-1)p}N} \right\rfloor.
  \]
  Then $\cT_D$ is slope-stable.
\end{MainTheorem}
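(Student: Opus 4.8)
The plan is to reduce slope-stability of $\cT_D$ to a statement about vanishing of global sections of exterior powers, using the fact that $\cT_D$ has rank $N$ (from the defining sequence \eqref{seq-def1}, its rank is $N+1-1=N$) and determinant $\cO_{\PP^N}(-(d-1))$, at least outside the singular locus. A torsion-free sheaf $\cE$ of rank $r$ is slope-stable if and only if for every $p$ with $1\le p\le r-1$, every saturated subsheaf of rank $p$ has slope strictly smaller than $\mu(\cE)$; and such a subsheaf gives a nonzero section of $(\wedge^p\cE)^{**}(-c)$ where $c$ is the smallest integer with $\mu(\wedge^p\cE(-c))\le 0$, i.e. roughly $c=\lceil p\mu(\cE)\rceil$. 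So I would first establish this classical reformulation: a nonzero saturated subsheaf $\cF\subset\cT_D$ of rank $p$ yields, by taking the $p$-th wedge and bidualizing, a nonzero map $\cO_{\PP^N}\to(\wedge^p\cT_D)^{**}\otimes(\det\cF)^{-1}$, and destabilization forces $\deg(\det\cF)$ to be large enough that this section survives after the twist by $q=\lfloor (d-1)p/N\rfloor$.

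Next I would carry out the numerology carefully. Since $\mu(\cT_D)=-(d-1)/N$, a rank-$p$ subsheaf $\cF$ destabilizes when $\mu(\cF)\ge\mu(\cT_D)$, i.e. $\deg(\cF)\ge -p(d-1)/N$, which since $\deg(\cF)\in\ZZ$ means $\deg(\cF)\ge -\lfloor p(d-1)/N\rfloor = -q$. Then $(\wedge^p\cF)^{**}\hookrightarrow(\wedge^p\cT_D)^{**}$ is a rank-one reflexive subsheaf of degree $\ge -q$, hence $(\wedge^p\cT_D)^{**}(q)$ has a nonzero section — contradicting the hypothesis. The key technical point is that one only needs to test $p$ in the range $1\le p\le s+1$ rather than $1\le p\le N-1$: this is where the bound $s=\dim(\sing(D))\le N-2$ enters. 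I would argue that a destabilizing subsheaf of minimal rank can be assumed to have rank $\le s+1$, by intersecting $\cT_D$ with a general linear subspace $X\subset\PP^N$ of dimension $s+1$ disjoint from $\sing(D)$: on $X$ the sheaf $\cT_D|_X$ is locally free, and a destabilizing subsheaf of $\cT_D$ of rank $>s+1$ would restrict to one of the same rank on $X$, but on a variety of dimension $s+1$ the maximal rank of a "new" destabilizing quotient is controlled — more precisely, I would use that slope-(semi)stability is preserved under restriction to general linear sections of dimension $\ge 2$ (Mehta–Ramanathan / Flenner type results) together with the observation that the first destabilizing step can always be taken of rank at most $\dim X - 1 = s$...

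Actually the cleaner route, and the one I would pursue, is: $\cT_D$ is not slope-stable iff there is a saturated subsheaf $\cF$ with $0<\rk\cF<N$ and $\mu(\cF)\ge\mu(\cT_D)$; replacing $\cF$ by its "wedge" we may assume $\cF$ or its quotient $\cQ=\cT_D/\cF$ has rank as small as we like, and in fact by passing to $\cQ^{**}$ or $\cF$ we can bound the relevant rank $p$ by $\min(\rk\cF,\rk\cQ)$, but that only gives $p\le N/2$. The genuine gain to $p\le s+1$ comes from the fact that $\cT_D$ is \emph{reflexive and locally free in codimension} related to $s$: any saturated $\cF\subset\cT_D$ with $\cT_D/\cF$ of rank $>s+1$ would force $\cF$ itself (of rank $<N-s-1$) to already violate the hypothesis range, so by symmetry between $\cF$ and the reflexive hull of the quotient twisted appropriately, one of the two has rank in $\{1,\dots,s+1\}$ — here one uses $2(s+1)\ge N$ is \emph{not} assumed, so this symmetry argument needs the sharper input that a subsheaf of rank $>s+1$ cannot be the minimal-rank destabilizer because its restriction to a general $\PP^{s+1}$ would be a subbundle whose existence is obstructed.

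The step I expect to be the main obstacle is precisely this reduction of the test range from $1\le p\le N-1$ down to $1\le p\le s+1$: controlling which ranks $p$ can support a maximal destabilizing subsheaf in terms of the dimension of the singular locus. I anticipate this requires a careful local analysis near $\sing(D)$ — showing that a saturated subsheaf of $\cT_D$ of large rank would have a quotient supported in codimension $\le$ something forcing it to be locally free on a large open set, and then using a Grothendieck–Lefschetz or Barth-type restriction theorem to push the destabilization down to a small-dimensional linear section where the rank count closes up. Once the test range is pinned down, the rest is the bidualized-wedge argument plus the elementary inequality $q=\lfloor (d-1)p/N\rfloor$, which is routine.
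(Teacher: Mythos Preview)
Your treatment of the range $1\le p\le s+1$ via Hoppe's criterion is correct and matches the paper. The genuine gap is the other half: handling a destabilizing subsheaf $\cK$ of rank $r$ with $s+2\le r\le N-1$. None of the mechanisms you suggest (minimal-rank destabilizers, Mehta--Ramanathan, Grothendieck--Lefschetz, local analysis near $\sing(D)$) is what closes this case, and some of them point in the wrong direction. In particular, you restrict to a linear space of \emph{dimension} $s+1$; the paper restricts to a linear space $L$ of \emph{codimension} $s+1$, i.e.\ of dimension $N-s-1$. (This is also what is actually needed to guarantee $L\cap\sing(D)=\emptyset$ for generic $L$: one needs $\dim L+\dim\sing(D)<N$, which holds automatically for $\dim L=N-s-1$ but not in general for $\dim L=s+1$ under the sole hypothesis $s\le N-2$.)

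The missing idea is a duality-plus-Koszul argument on $L$, not a stability-restriction theorem. Since $L\cap\sing(D)=\emptyset$, the restricted sheaf $\cF=\cT_D|_L$ is locally free and sits in
\[
0\to\cF\to (N+1).\cO_L\to\cO_L(d-1)\to 0.
\]
A destabilizing $\cK$ of rank $r$ restricts to give $\rH^0(L,\wedge^r\cF(-c))\ne 0$. Now use that $\cF$ is a vector bundle: $\wedge^r\cF\simeq\wedge^{N-r}\cF^*(1-d)$, so $\rH^0(L,\wedge^{N-r}\cF^*(1-d-c))\ne 0$. From the slope inequality $c\ge r(1-d)/N>1-d$ one gets $1-d-c<0$, and $r\ge s+2$ gives $1\le N-r\le N-s-2=\dim(L)-1$. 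Finally, taking the $p$-th exterior power of the \emph{dual} sequence $0\to\cO_L(1-d)\to(N+1).\cO_L\to\cF^*\to 0$ yields a Koszul-type complex showing $\rH^0(L,\wedge^p\cF^*(-1))=0$ for all $1\le p\le\dim(L)-1$, a contradiction. This step uses no hypothesis on $\cT_D$ beyond the defining sequence; the vanishing assumptions in the theorem are only needed for the low-rank range $p\le s+1$.
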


We may also formulate the result in terms of the Hilbert function of
$\cT_D$ only.

\begin{MainCorollary} \label{corstabileH0}
  The sheaf $\cT_D$ is slope-stable if:
  \[
    \rH^0\left(\cT_D(q) \right)=0,
      \qquad \mbox{with:} \qquad
      q=\left\lfloor {\frac{(d-1)(s+1)}N} \right\rfloor.
  \]
\end{MainCorollary}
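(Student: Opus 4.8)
The plan is to deduce this from Theorem~\ref{stabileH0} by checking its hypothesis for every $p$ in the range $1\le p\le s+1$. Set $q_p=\lfloor (d-1)p/N\rfloor$, so that $q_{s+1}=q$; the task is to prove $\rH^0\bigl(\wedge^p\cT_D(q_p)^{**}\bigr)=0$. The key point will be an inclusion of sheaves
\[
  \bigl(\wedge^p\cT_D\bigr)^{**}\ \hookrightarrow\ \binom{N+1}{p-1}\,\cT_D ,
\]
after which twisting by $q_p$ and taking global sections gives $\rH^0\bigl(\wedge^p\cT_D(q_p)^{**}\bigr)\hookrightarrow\binom{N+1}{p-1}\rH^0\bigl(\cT_D(q_p)\bigr)$, reducing everything to a single vanishing statement for $\cT_D$ itself.

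To build the inclusion I would argue as follows. By \eqref{seq-def1}, $\cT_D$ is a subsheaf of $\cO_{\PP^N}^{N+1}$, is reflexive (being the kernel of a morphism of locally free sheaves), and is locally free on the open set $U=\PP^N\setminus\sing(D)$, whose complement has codimension $\ge 2$ by the standing assumption $s\le N-2$. A fibrewise check shows that the comultiplication $\Delta$ of the exterior algebra of the trivial bundle carries, on each fibre, $\wedge^p W$ injectively into $W\otimes\wedge^{p-1}W$ for every subspace $W\subseteq\kk^{N+1}$: if $\Delta(\omega)=0$ then all contractions of $\omega$ against a dual basis of $W$ vanish, forcing $\omega=0$. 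Applying this to the subbundle $\cT_D|_U$ of $\cO_{\PP^N}^{N+1}|_U$ and composing with the inclusion $\cT_D|_U\otimes\wedge^{p-1}\cT_D|_U\hookrightarrow\cT_D|_U\otimes\wedge^{p-1}\cO_{\PP^N}^{N+1}|_U\cong\binom{N+1}{p-1}\cT_D|_U$, one obtains an injection of vector bundles on $U$,
\[
  \wedge^p\cT_D|_U\ \hookrightarrow\ \binom{N+1}{p-1}\,\cT_D|_U .
\]
Now write $j\colon U\hookrightarrow\PP^N$ for the inclusion. Since $\cT_D$ is locally free on $U$ one has $\wedge^p\cT_D|_U=\bigl(\wedge^p\cT_D\bigr)^{**}|_U$; and since $\binom{N+1}{p-1}\cT_D$ and $\bigl(\wedge^p\cT_D\bigr)^{**}$ are reflexive while $\PP^N\setminus U$ has codimension $\ge 2$, each of them is recovered by pushing forward its restriction to $U$. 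Applying the left-exact functor $j_*$ to the last displayed injection therefore yields $\bigl(\wedge^p\cT_D\bigr)^{**}\hookrightarrow\binom{N+1}{p-1}\cT_D$. I expect this manipulation with $j_*$ and reflexive hulls—in particular pinning down that the comultiplication really lands in the subsheaf $\binom{N+1}{p-1}\cT_D$ and not merely in $\binom{N+1}{p-1}\cO_{\PP^N}^{N+1}$—to be the only genuinely delicate step; the rest is formal.

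To finish, I would observe that $q_p=\lfloor(d-1)p/N\rfloor$ is non-decreasing in $p$, so $q_p\le q$ for $1\le p\le s+1$, and that $\cT_D$ is torsion-free on $\PP^N$; hence multiplication by a nonzero form of degree $q-q_p$ embeds $\rH^0(\cT_D(q_p))$ into $\rH^0(\cT_D(q))$, which vanishes by hypothesis. (For $p=1$ this already uses $\cT_D^{**}=\cT_D$.) Combining with the inclusion above, $\rH^0\bigl(\wedge^p\cT_D(q_p)^{**}\bigr)=0$ for all $p$ with $1\le p\le s+1$, so Theorem~\ref{stabileH0} applies and $\cT_D$ is slope-stable.
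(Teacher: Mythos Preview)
Your proof is correct and follows essentially the same approach as the paper: embed $(\wedge^p\cT_D)^{**}$ into copies of $\cT_D$ via comultiplication and the inclusion $\cT_D\hookrightarrow(N+1).\cO_{\PP^N}$, then deduce the hypotheses of Theorem~\ref{stabileH0} from the single vanishing $\rH^0(\cT_D(q))=0$. The only cosmetic differences are that the paper iterates the map $\wedge^k\cT_D\to(N+1).\wedge^{k-1}\cT_D$ to land in $(N+1)^{p-1}.\cT_D$ rather than $\binom{N+1}{p-1}.\cT_D$, and argues injectivity via ``kernel is torsion'' rather than your equivalent $j_*$ argument on $U=\PP^N\setminus\sing(D)$; you are also slightly more explicit than the paper about the step $q_p\le q$.
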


For isolated hypersurface singularities, this allows to generalize
\cite[Theorem 1.3]{dimca:jacobian-syzygies} and
\cite[Theorem 3.3]{dimca:versality-bounds} to arbitrary dimension. 

\begin{MainTheorem} \label{gen-dimca}
  Assume $\dim(\sing(D))=0$ and set $q=\left\lfloor {\frac{d-1}N} \right\rfloor$. Then $\cT_D$ is stable if:
  \[
    \deg(\sing(D)) < (d-q-1)(d-1)^{N-1}.
  \]
\end{MainTheorem}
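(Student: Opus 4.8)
The plan is to deduce Theorem~\ref{gen-dimca} from Corollary~\ref{corstabileH0} by producing the required vanishing $\rH^0(\cT_D(q))=0$ for $q=\lfloor (d-1)/N\rfloor$ under the stated bound on $\deg(\Sing(D))$. Since $\dim(\Sing(D))=0$ we have $s=0$ in the notation of the previous section, so the corollary asks exactly for one cohomology vanishing. First I would use the defining sequence~\eqref{seq-def1}: twisting by $q$, a global section of $\cT_D(q)$ is an $(N+1)$-tuple of forms of degree $q$ annihilated by the Jacobian matrix $\nabla(F)$, i.e.\ a degree-$q$ element of the module $T_D$ of Jacobian syzygies of $F$. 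Because $q = \lfloor (d-1)/N\rfloor$ is strictly smaller than $d-1$ (as soon as $N\ge 2$), such a section cannot come from the Koszul (trivial) syzygies of the partials, which sit in degree $d-1$ and above; so everything reduces to estimating the dimension of the nontrivial part of $(T_D)_q$.

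The key step is a Hilbert-function computation for the Jacobian ideal. Let $R=\kk[x_0,\dots,x_N]$ and $J_D=(\partial_0 F,\dots,\partial_N F)\subset R$. The graded Betti numbers of the syzygy module $\cT_D$ relate $\rH^0(\cT_D(q))$ to the failure of $J_D$ to be a complete intersection, and the discrepancy is governed precisely by $\deg(\Sing(D))$. Concretely, I would compare the Hilbert polynomial of $R/J_D$ (a zero-dimensional scheme, up to saturation, of length essentially $\deg(\Sing(D))$ corrected by the embedded structure) with that of the complete intersection $R/(\ell_0,\dots,\ell_N)$ of $N+1$ general forms of degree $d-1$, whose socle degree is $(N+1)(d-2)$ and which has $(T_D)_{<d-1}$ consisting only of Koszul syzygies. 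An exact-sequence chase — take the Koszul complex on the partials, or dualize~\eqref{seq-def1} and use $\Ext$ against $\cO_{\Sing(D)}$ — yields an identity of the shape
\[
\dim_\kk (T_D^{\mathrm{nontriv}})_q \;\le\; \text{(a binomial polynomial in }d,N,q\text{)} - \big((d-q-1)(d-1)^{N-1} - \deg(\Sing(D))\big)_{+},
\]
where the leading binomial term is exactly the dimension of the space of potential sections coming from the complete-intersection model and the subtracted quantity is the positive contribution of the generic complete intersection of the partials. The precise bookkeeping gives: if $\deg(\Sing(D)) < (d-q-1)(d-1)^{N-1}$, then the subtracted quantity is positive and strictly dominates, forcing $(T_D^{\mathrm{nontriv}})_q=0$, hence $(T_D)_q=0$ since $q<d-1$, i.e.\ $\rH^0(\cT_D(q))=0$.

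With that vanishing in hand, Corollary~\ref{corstabileH0} applies verbatim (with $s=0$) and gives slope-stability of $\cT_D$; in characteristic zero slope-stability is what is meant by "stable" in the statement, so we are done. The main obstacle I anticipate is the second step: making the Hilbert-function comparison rigorous when $J_D$ is \emph{not} saturated, so that $\deg(\Sing(D))$ (the length of the scheme $\Sing(D)$) differs from $\dim_\kk(R/J_D)$ in high degree by an embedded/torsion correction. One has to argue that in the single degree $q=\lfloor(d-1)/N\rfloor$, which is well below the regularity of $J_D$, the relevant estimate is controlled by the genuine geometric quantity $\deg(\Sing(D))$ rather than by the ideal-theoretic length; concretely this means bounding the multiplicities coming from the Jacobian scheme structure against the product $(d-1)^{N-1}$, for which one compares with the generic determinantal situation where $\Sing(D)$ is reduced and $\deg(\Sing(D))=(d-1)^N$ exactly. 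The inequality $\deg(\Sing(D)) < (d-q-1)(d-1)^{N-1}$ is then the precise threshold past which the complete-intersection contribution survives — matching, and generalizing to all $N$, the bounds of~\cite[Theorem~1.3]{dimca:jacobian-syzygies} and~\cite[Theorem~3.3]{dimca:versality-bounds}.
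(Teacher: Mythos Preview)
Your reduction to Corollary~\ref{corstabileH0} with $s=0$ is correct and is exactly what the paper does: one needs only $\rH^0(\cT_D(q))=0$ for $q=\lfloor(d-1)/N\rfloor$. The divergence is in how that vanishing is obtained.

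The paper does \emph{not} carry out any Hilbert-function comparison. It simply invokes \cite[Theorem~5.3]{du_Plessis-Wall:discriminants}, which states that if $r$ is the minimal degree of a Jacobian syzygy (the least $r$ with $\rH^0(\cT_D(r))\ne 0$), then $(d-r-1)(d-1)^{N-1}\le \deg(\sing(D))$. The contrapositive, with $r\le q$, gives the vanishing immediately under the hypothesis $\deg(\sing(D))<(d-q-1)(d-1)^{N-1}$.

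Your second step, by contrast, is not a proof as written. The displayed ``identity of the shape'' has an unspecified binomial polynomial on the right, no derivation of the subtracted term $(d-q-1)(d-1)^{N-1}$, and no mechanism explaining why that particular product should appear from an exact-sequence chase on the Koszul complex of the partials. You yourself flag the saturation issue, but the more basic problem is upstream: you have not produced any inequality at all, only a template for one. The du Plessis--Wall bound is a genuine theorem (it uses the structure of the discriminant and unfoldings, not just a graded-piece count), and what you have sketched is essentially an attempt to reprove it without the right tools. Either cite it, or supply an actual argument that explains where $(d-q-1)(d-1)^{N-1}$ comes from; the comparison with a complete intersection of $N+1$ forms of degree $d-1$ does not by itself yield that quantity in degree $q$.
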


In the second part of this paper, we consider some natural families
of divisors, not covered by the previous results, where stability of
$\cT_D$ can be proved.
Indeed, many interesting hypersurfaces tend to have singularities
of small codimension, for instance many divisors coming from orbit
closures, discriminants or from moduli theory are highly singular.
In this case, one strategy we propose is to pick a subvariety $X
\subset \PP^N$ disjoint from $\sing(D)$, such that $\cT_D$ restricts over
$X$ to a vector bundle of some special form, whose stability is under
control, and deduce from this the stability of $\cT_D$.
A natural candidate for this is the bundle of principal parts $\cE_{n-1}$.
This is defined as kernel of the evaluation of sections $\rH^0(\cO_X(n-1))
\otimes \cO_X \to \cO_X(n-1)$.
We contend that in some relevant situations $\cT_D$ will restrict to a
slope-stable bundle of principal parts $\cE_{n-1}$ and that this suffices to
prove stability of $\cT_D$ itself.
Let us point out that vector bundles of principal parts on projective
spaces, and in particular their stability, is a matter of independent
interest, see for example \cite{maakestad:principal,re:principal}.
We contribute to this by showing that the vector
bundle of principal parts on a smooth quadric surface, namely
on $\PP^1\times \PP^1$, is slope-stable (see
Proposition \ref{En is stable}). For this we make use of
representations of a quiver supported on a planar graph, rather than the tree
appearing in \cite{re:principal}.
 For this last point we need the base field to be of
  characteristic zero as the relevant representation theory is more
  tricky in positive characteristic. However the sheaf $\cT_D$ is
  easily proved to be simple in any characteristic.

Going back to the main families of divisors where our strategy
applies, let us first mention symmetric discriminants, see \S
\ref{section:symmetric_determinants}.
In this case, we argue that the suitable subvariety $X$ is a
projective plane, where stability of vector bundles of principal parts
is well-known. Also, in view of the Goto–J\'{o}zefiak–Tachibana's
resolution, see \cite{jozefiak:symmetric,goto-tachibana} (see also
  \cite[Section 6.3.8]{weyman:tract}), we get that $\cT_D$ is a
  \textit{Steiner sheaf}, that is, it has a linear resolution of length
  two. Altogether, the result is the following.

\begin{MainTheorem} \label{main:symmetric_determinant}
Let $n\ge 2$ be an integer and let $D$ be the determinant divisor of symmetric $n\times
n$ matrices in $\PP^{{n+1 \choose 2}-1}$. Then the logarithmic sheaf $\cT_D$ satisfies:
\begin{enumerate}[label=\roman*)]
\item \label{thereso} the sheafified minimal graded free resolution of $\cT_D$ takes the form:
  \[ 
    0 \to {n \choose 2}.\cO_{\PP^N}(-2) \to (n^2-1).\cO_{\PP^N}(-1) \to \cT_D \to 0;
  \]
 \item \label{isprincipal} the restriction of $\cT_D$ to a generic
  plane $P \subset \PP^N$ is
  isomorphic to the
  bundle of principal parts $\cE_{n-1}$ defined as kernel of the evaluation map:
  \[
    \binom{n+1}{2}.\cO_P \to  \cO_P(n-1);
  \]

  \item \label{isstable} if $\cha(\kk)=0$, the logarithmic sheaf $\cT_D$ is slope-stable. 
  \end{enumerate}
\end{MainTheorem}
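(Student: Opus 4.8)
The three parts are largely independent, save that \ref{isstable} rests on \ref{isprincipal}; I treat them in the order \ref{thereso}, \ref{isprincipal}, \ref{isstable}. For \ref{thereso}, write $F=\det$ for the generic symmetric $n\times n$ matrix, so $d=\deg D=n$ and $N=\binom{n+1}2-1$. The partials $\partial F/\partial x_{ij}$ are, up to invertible scalars, the cofactors of the matrix of variables, i.e. the entries of its adjugate, which is again symmetric; hence the Jacobian ideal $J_D\subset S=\kk[x_{ij}]$ is generated by the $\binom{n+1}2$ submaximal minors and cuts out the rank $\le n-2$ locus, so $\Sing(D)$ has codimension $3$. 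The minimal graded free resolution of this ideal is the Goto--J\'ozefiak--Tachibana resolution \cite{jozefiak:symmetric,goto-tachibana} (see also \cite[Section 6.3.8]{weyman:tract}), of the shape
\[
0\to S(-n-1)^{\binom n2}\to S(-n)^{n^2-1}\to S(-n+1)^{\binom{n+1}2}\to J_D\to0.
\]
Sheafifying, twisting by $d-1=n-1$, and using that the image of $\nabla(F)$ in \eqref{seq-def1} is $\cJ_D(d-1)$ — so that $\cT_D=\ker\bigl((N+1).\cO_{\PP^N}\to\cJ_D(n-1)\bigr)$ is the cokernel of the two right-most terms of the sheafified, twisted resolution — yields \ref{thereso}; minimality holds because all the differentials are linear. (As consistency checks, $\rk\cT_D=(n^2-1)-\binom n2=N$ and $c_1(\cT_D)=-(n-1)$.)

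For \ref{isprincipal}, let $P\cong\PP^2$ be generic. Since $\Sing(D)$ has codimension $3$, a generic $P$ misses it, so $\cT_D$ is locally free along $P$ and $\cJ_D(n-1)|_P=\cO_P(n-1)$; hence restricting $0\to\cT_D\to(N+1).\cO_{\PP^N}\to\cJ_D(n-1)\to0$ to $P$ stays exact and presents $\cT_D|_P$ as the kernel of the map $(N+1).\cO_P\to\cO_P(n-1)$ whose components are the restricted cofactors $\Delta_{ij}|_P\in\rH^0(\cO_P(n-1))$. As $\dim_\kk\rH^0(\cO_P(n-1))=\binom{n+1}2=N+1$, it suffices to show that for generic $P$ these forms are linearly independent: then they form a basis and, after an invertible change of basis in the source, the displayed map becomes the evaluation map, whose kernel is $\cE_{n-1}$ by definition. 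Linear independence means that $P$ lies in no polar hypersurface $V\bigl(\sum c_{ij}\Delta_{ij}\bigr)$, $(c_{ij})\ne 0$, of $F$; this holds generically by a dimension count on $\bigl\{(P,[C]):P\subseteq V(\sum c_{ij}\Delta_{ij})\bigr\}\subset\mathrm{Gr}(2,N)\times\PP^N$, whose fibres over $\PP^N$ are Fano schemes of planes in degree-$(n-1)$ hypersurfaces, of dimension at most $3(N-2)-(N+1)$ for generic $C$, so that the incidence variety does not dominate $\mathrm{Gr}(2,N)$. I expect this last estimate — an honest upper bound on the Fano schemes of planes in the polar hypersurfaces of $\det$ — to be the main obstacle; one may have to exploit that these polars are themselves close to determinantal, or replace the generic $P$ by one adapted to the $\mathrm{GL}_n$-action $A\mapsto gAg^{\mathrm t}$ on symmetric matrices.

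For \ref{isstable}, assume $\cha(\kk)=0$ and suppose $\cT_D$ is not slope-stable, so there is a saturated subsheaf $F\subsetneq\cT_D$ with $0<\rk F=r<N$ and $\mu(F)\ge\mu(\cT_D)=-(n-1)/N$. A saturated subsheaf of the reflexive sheaf $\cT_D$ is reflexive, so $F$, $\cT_D$ and hence $Q:=\cT_D/F$ are all locally free outside a closed subset of codimension $\ge 3$. A generic plane $P$ avoids that subset and $\Sing(D)$ alike, so $Q$ is locally free along $P$, $\mathrm{Tor}_1^{\cO_{\PP^N}}(Q,\cO_P)=0$, and restriction gives an exact sequence $0\to F|_P\to\cT_D|_P\to Q|_P\to0$. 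By \ref{isprincipal}, $\cT_D|_P\cong\cE_{n-1}$, which is slope-stable on $\PP^2$ by \cite{re:principal} — the only point where $\cha(\kk)=0$ enters. Since $F|_P\subset\cE_{n-1}$ has rank $r$ with $0<r<N=\rk\cE_{n-1}$, slope-stability gives $\mu_P(F|_P)<\mu_P(\cE_{n-1})$; but degrees with respect to $\cO(1)$ are unchanged by restriction to a linear subspace, so $\mu_P(F|_P)=\mu(F)$ and $\mu_P(\cE_{n-1})=\mu(\cT_D)$, whence $\mu(F)<\mu(\cT_D)$, a contradiction. This realises the strategy announced in the introduction: control $\cT_D|_X$ on a subvariety $X$ disjoint from $\Sing(D)$ — here a plane, where stability of bundles of principal parts is available — and transport it upstairs. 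With \ref{isprincipal} in hand, \ref{thereso} and \ref{isstable} are comparatively routine.
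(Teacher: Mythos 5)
Your treatment of \ref{thereso} coincides with the paper's (truncate and sheafify the Goto--J\'ozefiak--Tachibana resolution; note that identifying the partials with the cofactors uses $\cha(\kk)\neq 2$, which is the standing hypothesis of that section), and your argument for \ref{isstable} is the paper's restriction argument via stability of $\cE_{n-1}$ on $\PP^2$ from \cite{re:principal}. One small slip there: $Q=\cT_D/F$ is only torsion-free, hence locally free outside a set of codimension $\ge 2$, and a plane in $\PP^N$ cannot avoid a codimension-$2$ locus; the correct statement (and the paper's) is to take $P$ transverse to that locus, so that $\mathrm{Tor}_1(Q,\cO_P)$ has finite support and the image of $F|_P$ in $\cT_D|_P\simeq\cE_{n-1}$ still has the rank and degree of $F$ -- the destabilization argument then goes through unchanged.

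The genuine gap is in \ref{isprincipal}, at exactly the step you flag. Your reduction is the same as the paper's: it suffices that the $\binom{n+1}{2}$ restricted cofactors be linearly independent on a generic plane $P$, i.e.\ $\rH^0(\cT_D|_P)=0$. But the incidence-variety count you sketch does not establish this: you bound the Fano scheme of planes only ``for generic $C$'', whereas non-domination of $\mathrm{Gr}(2,N)$ requires control over \emph{every} member of the $N$-dimensional family of polars, and these are very special degree-$(n-1)$ hypersurfaces. Already for $n=3$ the rank-one polars are rank-$3$ quadrics in $\PP^5$, whose Fano scheme of planes has dimension $4$, exceeding the expected dimension $3(N-2)-(N+1)=2N-7=3$; so the uniform expected-dimension bound is false and one would need a stratified analysis of all polars, which you have not carried out. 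The paper closes this step without any Fano-scheme estimate, using item (i) itself: since $R/J_D$ is Cohen--Macaulay of codimension $3$ (this is what the GJT resolution gives), the resolution specializes to the generic plane section, yielding an Artinian algebra $A$ over the three-variable ring $R_P$ with resolution $0 \to \binom{n}{2}.R_P(-n-1) \to (n^2-1).R_P(-n) \to \binom{n+1}{2}.R_P(1-n) \to R_P \to A \to 0$; a direct Hilbert-function computation gives $A_t=0$ for $t\ge n-1$, and since $\rH^1(\cT_D|_P(t-n+1))\simeq A_t$ this is exactly the statement that the restricted partials span $\rH^0(\cO_P(n-1))$, i.e.\ the linear independence you need, whence the map $(N+1).\cO_P\to\cO_P(n-1)$ is the evaluation map up to an automorphism of the source and $\cT_D|_P\simeq\cE_{n-1}$. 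So the missing estimate can be bypassed entirely by the Cohen--Macaulay specialization argument already implicit in your proof of \ref{thereso}.
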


The next family we wish to mention is one of the main characters of
this paper, namely the \textit{generic determinant}, namely the divisor
$D$ defined as determinant of an $n\times n$ matrix of
variables $(x_{i,j})_{1 \le i,j \le n}$ in $\PP^N=\PP^{n^2-1}$.
This time, the suitable subvariety $X \subset \PP^N$ is a smooth
quadric surface. As we mentioned above, the bundle of principal parts
$\cE_{n-1}$  on $X$ is slope-stable, so the main point is to prove that
$\cT_D$ restricts over $X$ to the bundle $\cE_{n-1}$.
To do this, we analyze the Artinian reduction $A_L$ of the Jacobian
algebra of $D$ over the linear span $L \simeq \PP^3 \subset \PP^N$ of
$X$. In particular, we prove a quadratic Lefschetz property of
$A_L$, which in turn is obtained by specializing $L$ to a well-chosen
linear section which we call \textit{semigeneric}. It will turn out
that the intersection $D \cap L$ is a singular surface which is
resolved by a projective plane, blown-up at $n(n-1)$
complete intersection points. Studying carefully the divisors on this
blow-up we are able to prove
the next result for all $n \ge 2$.

\begin{MainTheorem} \label{main:determinant}
Let $n\ge 2$ be an integer and let $D$ be the generic determinant divisor of $n\times
n$ matrices in $\PP^{{n^2}-1}$. 
  Then the logarithmic sheaf $\cT_D$ satisfies:
  \begin{enumerate}[label=\roman*)]
\item  the sheafified minimal graded free resolution of $\cT_D$ takes
  the form:
  \[0 \to \cO_{\PP^N}(-n-1) \to n^2.\cO_{\PP^N}(-2) 
  \to 2(n^2-1).\cO_{\PP^N}(-1) \to \cT_D \to 0; 
    \]
  \item the restriction of $\cT_D$ to a generic
  quadric surface $X \simeq \PP^1 \times \PP^1 \subset \PP^N$ is
  isomorphic to the
  bundle of principal parts $\cE_{n-1}$ defined as kernel of the evaluation map:
  \[
    n^2.\cO_X \to  \cO_X(n-1);
  \]
\item the sheaf $\cE_{n-1}$ is simple and, if $\cha(\kk)=0$,  $\cE_{n-1}$ and $\cT_D$ are slope-stable.
\end{enumerate}
\end{MainTheorem}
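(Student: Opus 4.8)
The plan is to prove the three items in succession: item (i) is essentially a translation of a classical determinantal resolution, item (ii) is the technical core, and item (iii) combines a characteristic-free cohomology computation with a restriction argument built on (ii) and Proposition~\ref{En is stable}.

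\emph{Item (i).} The partial derivatives of $F=\det(x_{i,j})$ are, up to sign, the $(n-1)\times(n-1)$ minors of the generic matrix, so $\cJ_D$ is the ideal of submaximal minors of a generic square matrix. This ideal is Gorenstein of codimension $4$, and its minimal graded free resolution is the self-dual Gulliksen--Neg\aa rd complex, with Betti numbers $1,n^2,2(n^2-1),n^2,1$ whose generators sit in degrees $0,n-1,n,n+1,2n$ (see \cite{weyman:tract}). The Jacobian syzygy module is the image of the second differential; truncating that complex, sheafifying and twisting by $n-1$ yields exactly the asserted length-two resolution of $\cT_D$, and minimality survives sheafification since the twists $-(n+1),-2,-1$ are pairwise distinct for $n\ge 2$.

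\emph{Item (ii).} Because $\Sing(D)=V(\cJ_D)$ has codimension $4$, a generic $3$-plane $L\subset\PP^N$ avoids it, hence so does the quadric surface $X\subset L$; then $\cT_D$ is locally free along $X$, the $n^2$ cofactors have no common zero on $X$, and $\cT_D|_X$ is the kernel of a surjection $n^2.\cO_X\to\cO_X(n-1)$, where $\cO_X(n-1)$, the restriction of $\cO_{\PP^N}(n-1)$, equals $\cO(n-1,n-1)$ on $\PP^1\times\PP^1$ and has $\rh^0=n^2$. As $\cO_X(n-1)$ is globally generated with $\rh^0$ equal to the rank of $n^2.\cO_X$, such a kernel is $\cE_{n-1}$ exactly when the restricted cofactors are linearly independent, equivalently span $\rH^0(X,\cO_X(n-1))$. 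This spanning is an open condition on $X$, and, writing $Q$ for the quadric defining $X$ in $L$ and $A_L$ for the Artinian reduction of the Jacobian algebra of $D$ along $L$ (the quotient of $\kk[y_0,\dots,y_3]$ by the restrictions to $L$ of the partials of $F$), it translates into the \emph{quadratic Lefschetz property} that multiplication by $Q$ be an isomorphism $(A_L)_{n-3}\to(A_L)_{n-1}$ (both spaces have dimension $\binom n3$ as soon as the cofactors are independent on $L$, which the isomorphism itself forces, so only maximal rank of $\times Q$ is at stake). By a semicontinuity argument one reduces to establishing the property for a single, carefully chosen \emph{semigeneric} $L$ — one for which the surface $S=D\cap L\subset\PP^3$ degenerates to a singular determinantal surface whose desingularization is $\PP^2$ blown up at $n(n-1)$ complete-intersection points. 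Re-expressing the multiplication by $Q$ in terms of complete linear systems on this rational surface, whose dimensions are read off from the divisor-class data of the blow-up, then produces the required rank. \textbf{This step is the main obstacle:} pinning down the right semigeneric $L$, identifying $D\cap L$ with the blown-up plane, and extracting from the geometry of that surface the precise dimension count that makes $\times Q$ an isomorphism is where essentially all the work lies.

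\emph{Item (iii).} Granting (ii), $\cT_D|_X\simeq\cE_{n-1}$ for generic $X$. Applying $\Hom(-,\cE_{n-1})$ to $0\to\cE_{n-1}\to n^2.\cO_X\to\cO_X(n-1)\to 0$ and using that the evaluation map is the identity on global sections, that $\rH^0(\cO_X)=\kk$, and that $\rH^0(\cO_X(1-n))=\rH^1(\cO_X(1-n))=0$ (the last by K\"unneth, $\cO_X(1-n)$ being $\cO(1-n,1-n)$ on $\PP^1\times\PP^1$), one gets $\rH^0(\cE_{n-1})=\rH^1(\cE_{n-1})=0$ and $\Hom(\cE_{n-1},\cE_{n-1})\simeq\Ext^1(\cO_X(n-1),\cE_{n-1})\simeq\rH^1(\cE_{n-1}(1-n))\simeq\kk$; this computation is characteristic-free, so $\cE_{n-1}$ is simple. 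In characteristic zero $\cE_{n-1}$ is slope-stable by Proposition~\ref{En is stable}, and this propagates to $\cT_D$: a saturated subsheaf of $\cT_D$ of slope $\ge\mu(\cT_D)$ restricts, through generic hyperplanes from $\PP^N$ down to $L$ and then to a generic quadric $X\subset L$, to a subsheaf of $\cT_D|_X\simeq\cE_{n-1}$ of slope $\ge\mu(\cE_{n-1})$ — slopes being unchanged under the hyperplane restrictions and multiplied by $\deg X=2$ at the last step, while $\cT_D$ is reflexive and locally free near the generic $X$ — contradicting Proposition~\ref{En is stable}. Hence $\cT_D$ is slope-stable in characteristic zero. (In arbitrary characteristic $\cT_D$ is still simple, since $\End(\cT_D)$ injects, by restriction to a generic $X$ disjoint from the codimension-$4$ non-locally-free locus, into $\End(\cT_D|_X)=\End(\cE_{n-1})=\kk$.)
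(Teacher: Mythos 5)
Your proposal reproduces the paper's architecture rather than an alternative one: item (i) is the truncated Gulliksen--Neg\aa rd complex as in Proposition~\ref{res-determinant}; your reduction of item (ii) to the statement that the restricted cofactors span $\rH^0(X,\cO_X(n-1))$, i.e.\ that $\rH^0(\cT_D|_X)=\rH^1(\cT_D|_X)=0$, i.e.\ that $\cdot Q\colon A_{n-3}\to A_{n-1}$ is an isomorphism, together with the semicontinuity step reducing to one special $L$, is exactly \S\ref{section:Lefschetz}, Lemma~\ref{Lemma i and ii} and \S\ref{section:provethm5}; and your item (iii) — the characteristic-free computation $\End(\cE_{n-1})\simeq\Ext^1(\cO_X(n-1),\cE_{n-1})\simeq\rH^1(\cE_{n-1}(1-n))\simeq\kk$, plus propagation of stability from $\cE_{n-1}$ to $\cT_D$ by restricting a destabilizing subsheaf to a generic $X$ — is correct and matches the paper (the restriction argument is the one used for Theorem~\ref{main:symmetric_determinant}).

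The genuine gap is in item (ii), and you flag it yourself: you never prove, for any single $L$, that multiplication by a quadric (or by $h^2$ for a linear form $h$) has maximal rank $A_{n-3}\to A_{n-1}$. Saying that the required rank can be ``read off from the divisor-class data of the blow-up'' is not an argument, and in fact a mere dimension count of complete linear systems on the blown-up plane does not suffice. What the paper actually proves is Proposition~\ref{propn2}: for a semigeneric $L$ one has $I_L=x_0\fm_0^{n-2}+\fm_0^{n-1}$, equivalently the $n^2$ restricted cofactors are linearly independent; injectivity of $\cdot x_0^2\colon A_{n-3}\to A_{n-1}$ then follows because no degree-$(n-1)$ element of $I_L$ involves $x_0^2$, and bijectivity follows from the equality $\dim A_{n-3}=\dim A_{n-1}=\binom n3$. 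Establishing that linear independence is the heart of the matter: it amounts to the injectivity of the map $\AAA^*\to\rH^0(\cO_{\PP^3}(n-1))$ obtained from the matrix factorization \eqref{longM}, which the paper proves by lifting the determinantal representation of $\coker(M_L)$ to the blown-up surface $\hat S$ (Lemma~\ref{hatcM}) and then showing the vanishing $\Hom_{\hat S}(\cK,\cL')=0$ by an induction along the rigid curve $\fg\equiv\fh-\fl$, whose key case is Lemma~\ref{KN}. Without this (or some substitute producing one $L$ with the maximal-rank property), item (ii) is unproved, and the slope-stability of $\cT_D$ claimed in item (iii), which rests on the identification $\cT_D|_X\simeq\cE_{n-1}$, falls with it; only item (i), the simplicity of $\cE_{n-1}$, and the stability of $\cE_{n-1}$ (via Proposition~\ref{En is stable}) survive as actually established in your write-up.
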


In light of the last item, it is natural to investigate the
moduli space of simple sheaves -- or, if $\cha(\kk)=0$, of stable
sheaves -- which we denote by $\fM_n$, that contains
the sheaf $\cT_D$. This is done in \S
\ref{section:familiesdeterminants}. 
  Let us set up the framework needed to state our result in this
  direction. Let $n \ge 3$ be an integer.
  Consider two $n$-dimensional vector spaces $U$ and $V$ and
  the group $\SL(U) \times \SL(V)$. Put $\AAA=\Hom_\kk(V,U) \simeq
  V^* \otimes U$ and
  consider the standard representation of $\SL(U)$ on $U$, tensored
  with $\id_{V^*}$, so that $\SL(U)$ acts linearly on
  $\AAA$. This action commutes with the $\SL(V)$-action on
  $\AAA$ obtained via the dual representation on $V^*$ tensored with
  $\id_U$. So $\AAA$ is a representation of $\SL(U) \times \SL(V)$, which is faithful. We get an injective map $\SL(U) \times \SL(V) \to
  \GL(\AAA)$ and an induced injective morphism $\SL(U) \times \SL(V) \to
  \PGL(\AAA)$ which identifies $\SL(U) \times \SL(V)$ to a closed
  subgroup of $\PGL(\AAA)$. 
  
  For any $\rf \in \End_\kk(\AAA)$, we consider a map
  $M_\rf : U \otimes \cO_{\PP(\AAA)}(-1) \to V \otimes
  \cO_{\PP(\AAA)}$ canonically
  associated with $\rf$.
  When $\det(M_\rf) \ne 0$, it turns out that, setting $D_\rf=
  \VV(\det(M_\rf))$, we 
  have:
  \[
    \mbox{$\cT_{D_\rf}$ is simple} \qquad \Leftrightarrow \qquad [\rf]
    \in \PGL(\AAA) \qquad \Leftrightarrow \qquad  \mbox{$\cT_{D_\rf}$ is slope-stable (for $\cha(\kk)=0$).} 
  \]
  The subgroup $\SL(U) \times \SL(V)$ acts on the matrices $M_\rf$ by
  two-sided multiplication and this does not alter the isomorphism
  class of $\cT_{D_\rf}$. Hence,
  denoting by $\fM_n$ the moduli space of simple (or, in
  characteristic zero, stable) sheaves having the same Hilbert
  polynomial as $\cT_D$, we see that the assignment $\Psi : [\rf] \mapsto
  \cT_{D_\rf}$ defines a morphism: 
  \[
    \Psi : \PGL(\AAA)/\SL(U) \times \SL(V) \to \fM_n 
  \]

  Of course, the transpose ${}^\tra M_\rf$ of $M_\rf$ lands on the same divisor $D_\rf$.
  Our main result concerning $\fM_n$ is that, up to
  the $2:1$ cover arising from transposition, the 
  map $\Psi$ captures essentially the whole geometry of the open dense piece
  of $\fM_n$ consisting of logarithmic sheaves.
  \begin{MainTheorem} \label{main:determinant-moduli}
  The map $\Psi$ is an étale $2:1$ cover onto its image. The image of
  $\Psi$ is a smooth open affine piece of an
  irreducible component of $\fM_n$, of dimension $(n^2-1)^2$.
\end{MainTheorem}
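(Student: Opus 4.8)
The plan is to realize $\Psi$ as a $\PGL(\AAA)$-equivariant morphism from a homogeneous space, determine its fibres via a Torelli-type reconstruction combined with Frobenius' classification of determinant-preserving linear maps, and prove étaleness through a tangent-space computation based on the resolution of Theorem~\ref{main:determinant}\,(i).

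First I set up the source. Put $S=\PGL(\AAA)/(\SL(U)\times \SL(V))$; since the image of $\SL(U)\times\SL(V)$ in $\PGL(\AAA)=\PGL_{n^2}$ is the reductive subgroup $\PGL_n\times\PGL_n$ (the kernel of $\SL(U)\times\SL(V)\to\PGL(\AAA)$ being the finite group $\mu_n\times\mu_n$), Matsushima's criterion gives that $S$ is a smooth irreducible affine variety of dimension $n^4-1-2(n^2-1)=(n^2-1)^2$. Over the open locus of $\End_\kk(\AAA)$ where $\det(M_\rf)\ne 0$ the sheaves $\cT_{D_\rf}$ form a flat family of constant Hilbert polynomial (each $D_\rf$ with $\rf\in\GL(\AAA)$ is projectively equivalent to $D$, hence $\cT_{D_\rf}\cong \varphi_\rf^{*}\cT_D$ for a suitable $\varphi_\rf\in\PGL(\AAA)$), consisting of simple sheaves over $\GL(\AAA)$ by the equivalence recalled before Theorem~\ref{main:determinant-moduli}; the induced classifying map to $\fM_n$ is invariant under scaling and under the two-sided $\SL(U)\times\SL(V)$-action (which leaves the divisor $D_\rf$, hence $\cT_{D_\rf}$, literally unchanged), so it descends to $\Psi\colon S\to \fM_n$. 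Finally $\PGL(\AAA)=\PGL(N+1)$ acts on $\fM_n$ by pullback of sheaves on $\PP^N$, and $\Psi$ intertwines this action with the residual $\PGL(\AAA)$-action on the homogeneous space $S$; hence it suffices to analyze $\Psi$ near the base point, i.e.\ near $\cT_D$ itself.

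Next, the fibres and the "$2\!:\!1$" statement. Suppose $\cT_{D_\rf}\cong \cT_{D_{\rf'}}$. I would first establish a Torelli property: $\cT_{D_\rf}$ recovers the Jacobian ideal sheaf of $\det(M_\rf)$, hence the divisor $D_\rf$, up to the $\PGL(\AAA)$-action — most cleanly by reconstructing the minimal graded free resolution of Theorem~\ref{main:determinant}\,(i) (the Gulliksen--Negård complex of the ideal of $(n-1)\times(n-1)$ minors) and reading off the linear map $M_\rf$ up to isomorphism of complexes, that is, up to row and column operations and transposition. This gives $\det(M_{\rf'}(g\,a))=\lambda\det(M_\rf(a))$ for some $g\in\GL(\AAA)$, $\lambda\in\kk^{\times}$, so the linear automorphism $h=\rf'\circ g\circ \rf^{-1}$ of $\AAA\cong \Mat_n(\kk)$ satisfies $\det(h(b))=\lambda\det(b)$ for all $b$. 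By the classical theorem of Frobenius, $h$ has the form $b\mapsto PbQ$ or $b\mapsto P\,{}^\tra b\,Q$ with $P,Q\in\GL_n(\kk)$, and unwinding shows $[\rf']$ equals $[\rf]$ or $[{}^\tra\rf]$ in $S$. Conversely ${}^\tra M_\rf$ defines the same divisor, so $\Psi([\rf])=\Psi([{}^\tra\rf])$; and $[{}^\tra\rf]\ne[\rf]$ in $S$ because the transpose involution of $\Mat_n(\kk)$, being an anti-automorphism, is never of the form $b\mapsto PbQ$ when $n\ge 2$. Thus $\ZZ/2$ acts freely on $S$ by $[\rf]\mapsto[{}^\tra\rf]$, $\Psi$ is constant on its orbits, and the fibres of $\Psi$ are exactly these orbits, each of cardinality $2$; in particular the stabilizer of $[\cT_D]$ in $\PGL(\AAA)$ is $(\PGL_n\times\PGL_n)\rtimes\ZZ/2$, with Lie algebra of dimension $2(n^2-1)$.

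For étaleness I would compute $\Ext^{\bullet}_{\PP^N}(\cT_D,\cT_D)$ from the resolution $0\to\cO_{\PP^N}(-n-1)\to n^2.\cO_{\PP^N}(-2)\to 2(n^2-1).\cO_{\PP^N}(-1)\to\cT_D\to 0$ together with the cohomology of line bundles on $\PP^N$: this yields $\End(\cT_D)=\kk$ (as already known) and, as the key numerical input, $\dim_\kk\Ext^1(\cT_D,\cT_D)=(n^2-1)^2$. Since $\Psi$ has finite fibres, $\dim\Psi(S)=(n^2-1)^2$, so at $[\cT_D]$ one gets $(n^2-1)^2=\dim\Psi(S)\le\dim_{[\cT_D]}\fM_n\le\dim_\kk\Ext^1(\cT_D,\cT_D)=(n^2-1)^2$: hence $\fM_n$ is regular of dimension $(n^2-1)^2$ at $[\cT_D]$, and along $\Psi(S)$ by equivariance. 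The differential $d\Psi$ at the base point, composed with $\mathfrak{pgl}(\AAA)\twoheadrightarrow T_{[\cT_D]}S$, is the infinitesimal $\PGL(\AAA)$-action on $[\cT_D]$, whose kernel is the Lie algebra of the stabilizer, i.e.\ $\mathfrak{pgl}_n\oplus\mathfrak{pgl}_n$ of dimension $2(n^2-1)$; so $d\Psi$ is injective, hence an isomorphism onto $\Ext^1(\cT_D,\cT_D)$ by equality of dimensions, and by equivariance $d\Psi$ is an isomorphism everywhere. Thus $\Psi$ is unramified and, by flatness over a regular base with finite fibres (miracle flatness), étale; since étale morphisms are open, $\Psi(S)$ is open in $\fM_n$. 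Factoring $\Psi=\bar\Psi\circ\pi$ with $\pi\colon S\to S/(\ZZ/2)$ the finite étale quotient, $\bar\Psi$ is étale and universally injective, hence an open immersion, so $\Psi(S)\cong S/(\ZZ/2)$ as an open subscheme of $\fM_n$: it is smooth, irreducible (image of the connected $\PGL_{n^2}$), affine (Matsushima plus a finite quotient), of dimension $(n^2-1)^2$, and its closure — irreducible and open in $\fM_n$ — is an irreducible component. Finally $S\to S/(\ZZ/2)=\Psi(S)$ is the asserted étale double cover. The main obstacle is the Torelli-type reconstruction of $D_\rf$ from $\cT_{D_\rf}$, i.e.\ showing the minimal resolution rigidly encodes $M_\rf$, together with pinning down $\dim\Ext^1(\cT_D,\cT_D)=(n^2-1)^2$; once these are in place the remaining arguments are formal.
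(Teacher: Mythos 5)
Your overall architecture coincides with the paper's: realize the source as the affine homogeneous space $\PGL(\AAA)/(\SL(U)\times\SL(V))$ of dimension $(n^2-1)^2$, pin down the fibres by a Torelli statement plus the Frobenius-type count of determinantal representations, and get étaleness from a tangent-space sandwich. But the two steps you yourself flag as ``the main obstacles'' are exactly the content of the theorem, and the methods you sketch for them would not go through as written. For Torelli, the minimal graded presentation of $\cT_{D_\rf}$ is the Gulliksen--Negård differential $\varphi$, a $2(n^2-1)\times n^2$ matrix of linear forms, not $M_\rf$; ``reading off $M_\rf$ up to row and column operations and transposition'' from the resolution is precisely the assertion to be proved, and is of the same depth as the $2\!:\!1$ count itself. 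The paper proceeds differently: it recovers $\cI_Z(n-1)$ canonically as the cokernel of the evaluation $\cT_D\to \Hom(\cT_D(n-1),\cO_{\PP(\AAA)})^*\otimes\cO_{\PP(\AAA)}$ (Proposition~\ref{Psi}), and for Torelli proper (Proposition~\ref{Torelli-det}) it reads the rank-one Segre locus off the fibre-rank stratification (Fitting loci) of $\cT_D$ and reconstructs $D$ as its $(n-1)$-secant variety. Note also that in $\fM_n$ isomorphism is as sheaves on $\PP^N$, so Torelli must return the divisor $D_\rf=D_{\rf'}$ on the nose; the auxiliary $g\in\GL(\AAA)$ in your display should not appear. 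Your appeal to Frobenius for the two-element fibre is legitimate (the paper notes the equivalence), but the paper reproves it characteristic-freely via the class group of the small resolution $D^+$ and the two classes $\fl^+$, ${}^\tra\fl^+$ of Ulrich divisors (Proposition~\ref{2:1}).

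For étaleness, your sandwich needs $\dim\Ext^1(\cT_D,\cT_D)=(n^2-1)^2$ as an input, and the claim that this ``follows from the resolution plus line-bundle cohomology'' underestimates the work: applying $\Hom(-,\cT_D)$ to the resolution yields a three-term complex built from $\rH^0(\cT_D(1))$, $\rH^0(\cT_D(2))$, $\rH^0(\cT_D(n+1))$, and $\Ext^1$ is its middle cohomology, so you must control the rank of the second map (equivalently compute $\Ext^2$ or prove a surjectivity), not just dimensions. Moreover, your injectivity of $d\Psi$ via ``kernel of the infinitesimal action equals the Lie algebra of the stabilizer'' relies on smoothness of stabilizers and orbit maps, hence on characteristic zero, whereas the theorem for simple sheaves is meant to be characteristic-free. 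The paper sidesteps both points: cup product with the extension class of $0\to\cT_D\to\AAA\otimes\cO\to\cI_Z(n-1)\to0$ identifies $\Ext^1(\cI_Z,\cI_Z)$ with $\Ext^1(\cT_D,\cT_D)$, so $\rsyz$ is a submersion, and submersivity of $\Xi$ is imported from Kleppe--Miró-Roig (or the explicit differential in Reichstein--Vistoli); surjectivity of $d\Psi$ together with the finite fibres then \emph{forces} $\dim\Ext^1(\cT_D,\cT_D)=(n^2-1)^2$ rather than requiring it beforehand. So the skeleton is right, but the two decisive steps are missing and would need arguments of the paper's type (or equivalents) to be supplied; a minor slip: the closure of the image is not open — you mean the image is a smooth open dense piece of an irreducible component.
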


  This is related to classical work of Frobenius,
  \cite[\S 7.1]{frobenius:gruppen}, equivalent to the fact that the
  determinant is a $2:1$ map from
  $\PGL(\AAA)/\SL(U) \times \SL(V)$ to the locus $\fD_n$ of
  determinantal hypersurfaces.
  The étale nature of this map is proved in
  \cite{reichstein-vistoli:determinantal}, see Remark (1) after
  Theorem 1.3 of that paper for a discussion of the interplay with
  Frobenius' theorem and related literature.
  So Theorem \ref{main:determinant-moduli} can be though of as an
  interpretation of 
  \cite{reichstein-vistoli:determinantal} in terms of moduli spaces of
  simple or stable sheaves.
  Our method is characteristic-free (except for stability of bundles of
  principal parts on $\PP^1 \times \PP^1$) and relies on a direct
  proof of the Torelli theorem (in the sense of Dolgachev-Kapranov)
  for determinantal hypersurfaces and the divisor class group of
  determinantal hypersurfaces.

An analogous description as an algebraic group quotient, that is, a
homogeneous space, could be obtained as
well for the case of hypersurfaces defined by determinants of
symmetric matrices. Nevertheless, recall that the generic element of
the moduli space of semistable Steiner sheaves is locally free,
therefore the image of such quotient would sit as a closed
subscheme of the relevant moduli space. 
So there is no direct analogue of Theorem
\ref{main:determinant-moduli} for symmetric determinants.

\bigskip

\noindent\textbf{Notation.} Let us fix some notation that will be used
throughout this paper. Denote by $\kk$ a field. The assumptions on
$\kk$ may
change in different sections. We use the polynomial ring
$R=\kk[x_0,\ldots,x_N]$ and, if $A$ is a graded $R$-module, we denote
by $A_p$ its degree-$p$ summand.

If $U$ is a $\kk$-vector space, we write $\PP(U)$ for the set of
hyperplanes of $U$. 
For an integer $m$, if $\cE$ is a vector space, or module, or a sheaf,
we write $m.\cE$ for the direct sum of $m$ copies of 
 $\cE$. Put $\PP^N=\PP((N+1).\kk)=\Proj(R)$.
Given a non-zero homogeneous polynomial $F \in R$ of degree $d$, write
$D =\VV(F)$ for the hypersurface of $\PP^N$
defined by $F$. Denoting by $\nabla(F)$ its Jacobian matrix, the
Jacobian ideal $J_D$ is the ideal generated by $\nabla(F)$ and $\cJ_D$
is the Jacobian ideal sheaf.
The \textit{logarithmic tangent sheaf} $\cT_D$ associated to $D$ is
defined as the kernel of the gradient of $F$:
\[
0 \rightarrow \cT_D \rightarrow (N+1).\cO_{\PP^N} \xrightarrow{\nabla (F)}
\cJ_D(d-1) \to 0. 
\]

Given a coherent sheaf $\cF$ and $i \in \NN$, we write
$\rH_*^i(\cF)$ for the $i$-th cohomology module of $\cF$, namely
$\rH_*^i(\cF)=\bigoplus_{t \in  \ZZ}\rH^i(\cF(t))$. 
The \textit{module of logarithmic derivations} of $D$ is defined as
$T_D=\rH^0_*(\cT_D)$.
Moreover, $\sing(D)$ will denote the singular
locus of $D$, equipped with its natural scheme structure, which is to say $\sing(D)=\VV(J_D)$.
We write $s =\dim(\sing(D))$.

We will say that a coherent sheaf on a subvariety $X \subset \PP^N$
is stable or semistable if it is so in the sense of Gieseker, with
respect to the hyperplane divisor on $X$.
We will use the notion of slope-stability, again with respect to the
hyperplane divisor, and use that slope-stability implies stability
while semistability implies slope-semistability. We refer to \cite{huybrechts-lehn:moduli}
for basic material on semistability of sheaves.

\noindent\textbf{Acknowledgements.} We would like to thank the
Institute of Mathematics, Statistics and Scientific Computing in
Campinas and the Institut de Math\'ematiques de Bourgogne in Dijon for the warm hospitality and for providing the best
working conditions. We are grateful to Ronan Terpereau
for useful comments.
 We wish to thank the referee for suggesting major improvements to
the paper and pointing out the reference \cite{reichstein-vistoli:determinantal}.

\section{Stability for low dimensional singularities}\label{sec-lowdim}

\label{section:hoppe}

In this section, we study the general case, that is hypersurfaces $D$
inside $\PP^N$ of degree $d \ge 2$.
Specifically, \S \ref{section:proofs} is devoted to prove
Theorems \ref{stabileH0} and  \ref{gen-dimca} and Corollary
\ref{corstabileH0}.
Sharpness of these results is also discussed briefly.
Finally, in \S \ref{section:torelli} we provide some further remarks and
comments about the Torelli problem for logarithmic derivations, namely
the question of whether the hypersurface $D$ can be reconstructed from
the sheaf $\cT_D$. In this section $\kk$ is an arbitrary field.

\subsection{Stability of sheaves of logarithmic derivations}
 \label{section:proofs} 

 We first prove Theorem \ref{stabileH0}. Our strategy is to exclude the
existence of destabilizing subsheaves having rank up to $s+1$ making
use of the vanishing assumptions of spaces of global sections of
reflexive hulls of exterior powers with a refinement of Hoppe's criterion.

Then, we take care of potentially destabilizing subsheaves of rank
between $s+2$ and $n-1$ by restricting $\cT_D$ to a linear space $L$
of codimension $s+1$
disjoint from the singular locus $\sing(D)$ and working on the resulting
Koszul complex of $\nabla (F)|_L$.

\begin{proof}[Proof of Theorem \ref{stabileH0}] 
We assume that $\cT_D$ is unstable despite satisfying the assumptions
and we seek a contradiction. 
Without loss of generality, we may assume that the field $\kk$ is
algebraically closed.

Consider a destabilizing subsheaf $\cK$ of $\cT_D$, set $r=\rk(\cK)$ and
put $c=\deg(c_1(\cK))$ so that $r < N$ and:
\begin{equation}
  \label{slope-inequality}
  \frac c r \ge \frac{1-d}{N}.  
\end{equation}

Without loss of generality, we may assume that $\cT_D/\cK$ is
torsion-free.
The embedding $j : \cK \hookrightarrow \cT_D$ gives a non-trivial map $
\wedge^r \cK \to \wedge^r \cT_D$ and, applying the bi-duality functor, we
get a non-trivial map:
\[
  j_r : \cO_{\PP^N}(c) \simeq (\wedge^r \cK)^{**} \to (\wedge^r \cT_D)^{**}.
\]
The image of $j_r$ is a quotient of $\cO_{\PP^N}(c)$, hence it is
 a torsion sheaf unless $j_c$ is injective. The former
case is excluded since this image sits in $(\wedge^r \cT_D)^{**}$, so
$j_r$ is injective and
\begin{equation}
  \label{section-exists}
  \rH^0(\PP^N,\wedge^r \cT_D(-c)^{**}) \ne 0.  
\end{equation}

This non-vanishing contradicts our vanishing assumptions if $r \le
s+1$. Hence we must have $s+2 \le
r \le N-1$. In other words, we have to prove that $\cT_D$ has no
destabilizing subsheaf of rank $r$ with $s+2 \le r \le N-1$.
So the proof is finished for $s=N-2$ but needs further argumentation for
$s < N-2$.

To comply with this, we
consider a linear subspace $L$ of $\PP^N$ of codimension $s+1$ which
is skew to $\sing(D)$ and meets transversely the locus
where $\cT_D/\cK$ is not locally free. Observe
that $\dim(L)=N-s-1 \ge 2$.
Denote $\cF = (\cT_D)|_L$. Then the exact sequence defining $\cT_D$ restricts to:
\begin{equation}
  \label{definitionF}
  0 \to \cF \to (N+1).\cO_L  \to \cO_L(d-1)  \to 0  
\end{equation}
and therefore the sheaf $\cF$ is locally free.
Moreover, the map $j$ restricts to an injective map $j_L : \cK|_L \to
\cF$ and, taking exterior powers of $j_L$, we get:
\[
  \rH^0(L,\wedge^r \cF(-c)) \ne 0.  
\]
Using the natural isomorphism $\wedge^r \cF \simeq \wedge^{N-r}
\cF^*(1-d)$, this amounts to:
\begin{equation}
  \label{section-exists-L}
  \rH^0(L,\wedge^{N-r} \cF^*(1-d-c)) \ne 0.  
\end{equation}

Now, since $d \ge 2$ and $r < N$, the inequality \eqref{slope-inequality} gives:
\[
  c \ge \frac{r(1-d)}{N} > 1-d,
\]
or, equivalently, $1-d-c<0$. Also, $s+2 \le r \le N-1$ gives $1 \le N-r \le
N-s-2 = \dim(L)-1$. Therefore, to reach the desired contradiction, it suffices to show:
  \begin{equation}
    \label{dadimo}
  \rH^0(L,\wedge^p \cF^*(-1))=0, \qquad \mbox{for all integers $p$ with
    $1\le p \le \dim(L)-1$}.
  \end{equation}

  To get this, we dualize \eqref{definitionF} and take $p$-th exterior power to
  write an exact complex:
    \begin{align*}
      \bigwedge^p\Big( \cO_L(1-d) \to (N+1).\cO_L\Big) \longrightarrow \wedge^p \cF^* \to 0.    
  \end{align*}

  Tensoring with $\cO_L(-1)$ and taking cohomology,
  since $p \le \dim(L)-1$ we get $\rH^0(L,\wedge^p \cF^*(-1))=0$. So
  \eqref{dadimo} is proved and the theorem as well.
\end{proof}

\begin{proof}[Proof of Corollary \ref{corstabileH0}]
Given an integer $p$ with $1 \le p \le s+1$, the $p$-th exterior power
of the injection $i : \cT_D \to (N+1).\cO_{\PP^N}$ gives maps:
\[
  \bigwedge^p \cT_D \xrightarrow{i_1} (N+1).\bigwedge^{p-1} \cT_D \to \cdots \xrightarrow{i_{p-1}} (N+1)^{(p-1)}.\cT_D,
\]
and taking reflexive hulls the composition $i_{p-1}\circ \cdots
\circ i_1$ gives:
\[
  \xymatrix{
    \bigwedge^p \cT_D \ar^-{i_{p-1}\circ \cdots \circ i_1}[r] \ar[d] & (N+1)^{(p-1)}.\cT_D \ar@{=}[d]\\
    \left(\bigwedge^p \cT_D\right)^{**} \ar[r] & (N+1)^{(p-1)}.\cT_D^{**} }
  \]
  Since the kernel of each of the maps $i_1,\ldots,i_{p-1}$ is a torsion sheaf,
  we get that $i_{p-1}\circ \cdots \circ i_1$ induces an injective
  map:
  \[
    \wedge^p \cT_D(q)^{**} \hookrightarrow (N+1)^{(p-1)}.\cT_D(q).
  \]
  Therefore, for all $p$ with $1 \le p \le s+1$, setting
  $q_p=\left\lfloor {\frac{(d-1)p}N} \right\rfloor$ and assuming
  $\rH^0(\cT_D(q))=0$ we get that
  $\rH^0(\wedge^p\cT_D(q_p)^{**})=0$ for all $p$ so Theorem \ref{stabileH0} gives
  stability of $\cT_D$.
\end{proof}

\begin{proof}[Proof of Theorem \ref{gen-dimca}]
  We assume $\deg(\sing(D)) < (d-q-1)(d-1)^{N-1}$ and prove that $\cT_D$ is slope-stable.
  Since the sheaf $\cT_D$ is reflexive and $\dim(\sing(D))=0$, 
  in view of Corollary \ref{corstabileH0}, we only have to check:
  \[
    \rH^0(\cT_D(q))=0.
  \]

  The degree (which is to say, the length) of the 0-dimensional subscheme $\sing(D) \subset \PP^N$ is the  
  total Tjurina number of $\sing(D)$, obtained as the sum of the length
  of the localization of $\sing(D)$ at the points of the set-theoretic support of $\sing(D)$.

  Consider the minimal degree relation of $J_D$, that is,
  the smallest integer $r$ such
  that $\rH^0(\cT_D(r)) \ne 0$.
  If $\cT_D$ was not slope-stable we would have $r \le q$.
  
  According to \cite[Theorem
  5.3]{du_Plessis-Wall:discriminants},  the integer $r$ satisfies
  $(d-r-1)(d-1)^{N-1} \le \deg(\sing(D))$. 
  Hence, if $\cT_D$ was not slope-stable then $r \le q$, so
  $(d-q-1)(d-1)^{N-1} \le \deg(\sing(D))$, which contradicts our assumption.
\end{proof}

\begin{Remark}\label{rmk-cone}
An obvious obstruction to stability of $\cT_D$ is that a partial
derivative of the equation $f$ defining $D$ vanishes identically, in a
suitable system of coordinates.
Indeed, if this happens then the sheaf $\cT_D$ admits a decomposition of the following type:
\[
\cT_D \simeq {\cT}_{\tilde{D}}  \oplus r.\cO_{\PP^N},
\]
where $r$ denotes the number of vanishing derivatives. This excludes
that $\cT_D$ is slope-semistable, or simple.

In characteristic zero this is equivalent to the fact that $D$ is a
cone, where $r-1$ equals the dimension of the linear (projective)
subspace which is the apex of the cone.
\end{Remark}

\begin{Remark}
If no partial
derivative of $F$ vanishes identically (up
to a coordinate change) and
 $(d-1)(s+1) < N$, then the
sheaf $\cT_D$ is slope-stable by Corollary \ref{corstabileH0}.
Notice that this numerical condition is sharp, as the following
example shows. Assume $\cha(\kk)$ does not divide $d$ nor $d-1$ and
consider the hypersurface $D$ defined by the homogeneous polynomial: 
\[
F = x_0 x_1^{d-1} + \sum_{j=2}^N x_j^d, \:\:\mbox{ which gives } \:\:
\nabla(F) = [x_1 ^{d-1}, \:\: (d-1)x_0x_1^{d-2}, \:\: d x_2^{d-1}, \:\:
\ldots, \:\: d x_N^{d-1} ]. 
\]

Observe that $D$ is singular only
at the point $(1:0:\ldots:0)$.
The associated sheaf $\cT_D$ has $\rH^0(\cT_D)=0$ and $\rH^0(\cT_D(1))\neq
0$. If $d \geq N+1$, we get $c_1(\cT_D) \leq
-\rk (\cT_D)$, so the sheaf $\cT_D$ is not slope-stable because
$\cO_{\PP^N}(-1) \subset \cT_D$ is a destabilizing subsheaf.

\end{Remark}

\subsection{A Torelli-type result}

\label{section:torelli}

In this section we will focus on some results of ``Torelli
type''. Recall that such nomenclature is used in general for results
on the embeddings between moduli spaces. In particular, this type of
problems for logarithmic tangent sheaves has been proposed
by Dolgachev and Kapranov in
\cite{dolgachev-kapranov:arrangements}, followed by many others. 

In our case case we are interested in the morphism which associates to
a hypersurface $D \subset \PP^N$ its logarithmic tangent sheaf
$\cT_D$. More specifically we are interested in the following
question: \textit{Does the logarithmic tangent sheaf $\cT_D$ determine the hypersurface $D$?}
These hypersurfaces have been called \textit{DK-Torelli} in
\cite{dimca:jacobian-syzygies}.  Keeping this definition, we provide
an extension of \cite[Theorem 1.5]{dimca:jacobian-syzygies} to the
case of non-isolated singularities, with a similar proof. For
terminology about Sebastiani-Thom hypersurfaces and multiplicity of
singularities we refer to \cite{wang:jacobian}.

Before stating the result, recall that, by \cite{sernesi:jacobian-documenta}, if $D$ is singular there is an identification of $R$-modules:
\[
  \rH^1_*(\cT_D) \simeq \frac{J_D^\sat}{J_D},
\]
where $J_D^\sat$ denotes the saturation of $J_D$. In particular we have isomorphisms:
\[
\rH^1(\cT_D(t-d)) \simeq \left( \frac{J_D^\sat}{J_D} \right)_t, \qquad
\mbox{for all $t \in \ZZ$},
\]
and these commute with multiplication maps in $R$.

\begin{Proposition} \label{prop:torelli}
Suppose that there exists an integer $m<d-1$ such that:
\begin{itemize}
\item $\rH^0(\cT_D(2m))=0$;
\item there exist two elements in $\rH^1(\cT_D(m-d))$ that admit two
  non zero representatives $h_1,h_2 \in (J_D^\sat)_m$ with no common factor.
 \end{itemize}
 Then, $\cT_D$ determines the Jacobian ideal of $F$.\\
 Furthermore, if the hypersurface $D$ is not DK-Torelli, then $D$ has a singularity of multiplicity $d-1$ or the polynomial $F$ is of Sebastiani-Thom type. 
\end{Proposition}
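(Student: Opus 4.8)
The plan is to adapt the argument of \cite[Theorem~1.5]{dimca:jacobian-syzygies} to arbitrary $N$. We may assume $\kk$ algebraically closed; I would take another hypersurface $D'=\VV(F')$ with $\cT_{D'}\simeq\cT_D$ and note first that comparing first Chern classes forces $\deg(F')=d$. The target splits in two: first that $J_{D'}=J_D$ --- which is exactly the statement that $\cT_D$ determines the Jacobian ideal --- and then the analysis of when this still permits $D'\ne D$.

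The starting remark is that a hypersurface is recovered from its logarithmic sheaf \emph{together with its defining embedding}: if $j\colon\cT_D\hookrightarrow(N+1).\cO_{\PP^N}$ is the inclusion from the defining sequence, then $\coker(j)$ is torsion-free of rank one and first Chern class $d-1$, hence of the form $\cI_Z(d-1)$ with $\codim(Z)\ge2$; the $N+1$ tautological sections of $\cI_Z(d-1)$ produced by $j$ span precisely $(J_D)_{d-1}$ and $Z=\sing(D)$, so $j$ recovers $J_D=R\cdot(J_D)_{d-1}$, and through the Euler relation (when $\cha(\kk)\nmid d$) also $F$ up to a scalar. Hence everything reduces to comparing the two embeddings $j,j'$ of $\cT_D$ into $(N+1).\cO_{\PP^N}$ coming from $D$ and from $D'$ (the latter transported along the fixed isomorphism): I must prove that they differ by an element of $\GL_{N+1}(\kk)$ on the target, equivalently that the two $(N+1)$-dimensional subspaces $W,W'\subseteq\rH^0(\cHom(\cT_D,\cO_{\PP^N}))$ they determine coincide. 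Dualising the defining sequence and running the local-to-global spectral sequence for $\mathcal{E}xt^{\bullet}(\cJ_D(d-1),\cO_{\PP^N})$, whose positive-degree sheaves are supported on $\sing(D)$ in codimension $\ge2$, I would exhibit $\rH^0(\cHom(\cT_D,\cO_{\PP^N}))$ as an extension of a finite-length module supported on $\sing(D)$ by the span of the $N+1$ coordinate projections --- the ambient space in which $W$ and $W'$ live.

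The hypotheses enter at this point. Since $m<d-1$ one has $(J_D)_m=0$, so the identification $\rH^1_*(\cT_D)\simeq J_D^\sat/J_D$ of \cite{sernesi:jacobian-documenta} turns the second hypothesis into two coprime forms $h_1,h_2$ of degree $m$ lying in the saturated Jacobian ideal. Regarding $h_i$ as a section of $\cJ_D(m)$ and pulling the defining sequence back along it produces a non-split extension $0\to\cT_D(m-d+1)\to\cE_i\to\cO_{\PP^N}\to0$, equivalently a full-rank refinement $\cT_D\subset\cE_i(d-1-m)\subset(N+1).\cO_{\PP^N}$ of the embedding $j$, with $\coker\!\bigl(\cE_i(d-1-m)\hookrightarrow(N+1).\cO_{\PP^N}\bigr)$ a quotient of $\cJ_D(d-1)$ supported on $\VV(h_i)$. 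Performing the parallel construction for $j'$ and then combining the two together with the coprime pair $h_1,h_2$, I expect to manufacture a Jacobian syzygy of degree $2m$, i.e.\ a section of $\cT_D(2m)$, which --- precisely because $h_1$ and $h_2$ have no common factor --- is forced to be non-zero unless $W=W'$ already; this is the analogue of the use of two non-proportional Jacobian syzygies in \cite[Theorem~1.5]{dimca:jacobian-syzygies}. The vanishing $\rH^0(\cT_D(2m))=0$ of the first hypothesis then gives $W=W'$, so $j$ and $j'$ agree up to $\GL_{N+1}(\kk)$; hence $\cJ_{D'}=\cJ_D$ and, comparing degree-$(d-1)$ parts, $J_{D'}=J_D$.

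Finally, suppose $J_{D'}=J_D$ but $D'\ne D$. Then $\partial_iF'=\sum_j c_{ij}\,\partial_jF$ with $C=(c_{ij})\in\GL_{N+1}(\kk)$, invertible because the $\partial_iF'$ still span $(J_D)_{d-1}$; and the failure of DK-Torelli means the linear system $\{\,G\in R_d:\partial_iG\in(J_D)_{d-1}\ \text{for all }i\,\}$ is strictly larger than $\kk\cdot F$. Any one-parameter subfamily of it has constant Jacobian ideal, i.e.\ gives a first-order deformation of $F$ preserving $J_D$; invoking the classification of Jacobian ideals admitting such deformations --- which is exactly where Sebastiani-Thom decompositions and singular points of multiplicity $d-1$ arise, cf.\ \cite{wang:jacobian} --- one concludes that $F$ is of Sebastiani-Thom type or that $D$ carries a singular point of multiplicity $d-1$. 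I expect the rigidity step of the third paragraph, deducing $W=W'$ from coprimality of $h_1,h_2$ and from $\rH^0(\cT_D(2m))=0$, to be the main obstacle; the rest is bookkeeping with the defining sequence, local duality and the Euler relation, following \cite{dimca:jacobian-syzygies}.
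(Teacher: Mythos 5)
Your proposal does not contain a proof of the crucial step, and the step you defer is exactly the heart of the matter. You reduce everything to showing that the two embeddings $j,j'$ of $\cT_D$ into $(N+1).\cO_{\PP^N}$ span the same subspace $W=W'$ of $\rH^0(\cHom(\cT_D,\cO_{\PP^N}))$, and then write that by combining the pulled-back extensions $\cE_1,\cE_2$ you \emph{expect} to manufacture a degree-$2m$ Jacobian syzygy that is nonzero unless $W=W'$, adding that you expect this rigidity step to be the main obstacle. That is precisely the gap: no construction of such a syzygy is given, and it is not clear how the extensions $\cE_i$ would produce one. The paper's argument supplies this missing mechanism, and in a simpler form that avoids comparing embeddings altogether: it characterizes $(J_D)_{d-1}$ inside $R_{d-1}$ as the set of $g$ for which the multiplication map $(\cdot g)_m:\left(J_D^\sat/J_D\right)_m\to\left(J_D^\sat/J_D\right)_{m+d-1}$ vanishes. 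The nontrivial implication is proved by writing $gh_1=\sum_j a_j\partial_j F$, $gh_2=\sum_j b_j\partial_j F$ and using the identity $(gh_1)h_2-(gh_2)h_1=0$ to obtain the degree-$2m$ syzygy $\sum_j(a_jh_2-b_jh_1)\partial_j F=0$; the hypothesis $\rH^0(\cT_D(2m))=0$ forces $a_jh_2=b_jh_1$, coprimality of $h_1,h_2$ gives $h_1\mid a_j$, hence $g\in J_D$. Since $J_D$ is generated in degree $d-1$ and $\rH^1_*(\cT_D)\simeq J_D^\sat/J_D$ as graded $R$-modules (Sernesi), the ideal is recovered from the $R$-module structure of $\rH^1_*(\cT_D)$, which is intrinsic to the sheaf; no identification of the subspaces $W,W'$ or of the maps to the trivial bundle is ever needed.

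Two further remarks. First, your parenthetical claim that $J_D$ plus the Euler relation recovers $F$ up to scalar is false as stated: the ideal $J_D$ may contain degree-$d$ elements other than multiples of $F$, and this is exactly what the failure of DK-Torelli and Wang's theorem are about (Euler only recovers $F$ if you know the ordered tuple of partials, i.e.\ the embedding with its coordinates). Second, your treatment of the last assertion is essentially the paper's: once $\cT_D$ determines $J_D$, one quotes \cite[Theorem 1.1]{wang:jacobian} to conclude that a non-DK-Torelli $D$ has a singular point of multiplicity $d-1$ or $F$ is of Sebastiani--Thom type; your extra discussion of one-parameter families with constant Jacobian ideal is not needed. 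So the overall architecture is salvageable, but as written the proof of the first (and main) assertion is missing.
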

\begin{Remark}
Notice that having a singularity of multiplicity $d-1$ is not a
sufficient condition for the hypersurface $D$ to fail
the DK-Torelli property. Indeed, as we will see in \S
\ref{sec-DKTorelli-det}, the hypersurfaces defined by determinants
satisfy the DK-Torelli property but do admit such singularities -- we
will actually exploit these singularities in
\S \ref {subsection:semigeneric}.

On the contrary, being of Sebastiani-Thom type is a
sufficient condition for failing the DK-Torelli property, at least
when $D$ is not a cone. This can be seen taking the following
description of the defining equation for a Sebastiani-Thom divisor:
$$
F = G(x_0,\ldots,x_i) + H(x_{i+1},\ldots,x_N),
$$
with $G$ and $H$ non zero homogeneous polynomials of degree $d$, and consider the infinite family
$$
F_\alpha = \alpha G(x_0,\ldots,x_i) + H(x_{i+1},\ldots,x_N) \:\:\mbox{ with } \:\: \alpha \in \kk.
$$
Having that the Jacobian ideals $J_{D_\alpha}$ coincide, all the hypersurfaces $D_\alpha = \VV(F_\alpha)$ induce the same logarithmic sheaf and therefore are not DK-Torelli.
\end{Remark}
\begin{proof}[Proof of Proposition \ref{prop:torelli}] 
Our first goal is to characterize when a homogeneous polynomial $g$ of
degree $d-1$ belongs to the Jacobian ideal $J_F$. In order to
do so, consider, for any $k\in \NN$, the following exact sequence: 
$$
0 \to \cO_{\PP^N}(-d+1+k) \xrightarrow{\cdot g} \cO_{\PP^N}(k) \to \cO_Y(k) \to 0,
$$
with $Y = \VV(g)$. Let us tensor it by $\cT_D$ and note that the
first map remains injective, since $\cT_D$ is torsion free, and consider
the induced exact sequence in cohomology:
$$
\begin{array}{rl}
0 &\rightarrow \rH^0(\cT_D(-d+1+k)) \rightarrow \rH^0(\cT_D(k)) \rightarrow \rH^0(\cT_D(k)|_{Y}) \rightarrow \vspace{2mm}\\
&\rightarrow \rH^1(\cT_D(-d+1+k)) \rightarrow \rH^1(\cT_D(k)) \rightarrow \cdots
\end{array}
$$

We know that $\rH^0(\cT_D(k))$ describes the homogeneous syzygies of
degree $k$ of the Jacobian ideal, that is, it is given by all the
$(N+1)$-tuples $(a_0,\ldots,a_N)$ of homogeneous polynomials of degree
$k$ such that $\sum_{i=0}^N a_i \frac{\partial F}{\partial x_i} =0$.
This implies that the first linear map of the previous diagram
does not depend on the choice of $g$.
 
Consider
 the multiplication map by $g$:
$$
\left(\frac{J_D^\sat}{J_D}\right)_{m} \stackrel{(\cdot g)_m}{\longrightarrow} \left(\frac{J_D^\sat}{J_D}\right)_{m+d-1}.
$$

It is straightforward to observe that, if $g \in J_D$, then
$(\cdot g)_m=0$.
Let us prove the converse implication. Suppose thus that $(\cdot g)_m=0$ and
note that that both $g \cdot h_1$ and $g \cdot h_2$ belong to
$\left(J_D\right)_{m+d-1}$. This means that there are $(N+1)$-tuples
of homogeneous polynomials $(a_0,\ldots,a_N)$  and $(b_0,\ldots,b_N)$ of degree $m$, such that:
$$
g \cdot h_1 = \sum_{i=0}^N a_i \frac{\partial F}{\partial x_i} \:\:\mbox{ and } \:\: g \cdot h_2 = \sum_{j=0}^N b_j \frac{\partial F}{\partial x_j}.
$$

Therefore we get:
$$
0 = (g \cdot h_1)h_2 - (g \cdot h_2)h_1 = \sum_{j=0}^N(a_j h_2-b_j h_1)\frac{\partial{F}}{\partial x_j}.
$$

In view of the assumption $\rH^0(\cT_D(2m))=0$, we have thus:
$$
a_j h_2-b_j h_1 =0, \:\: \mbox{ for all } \:\: j=0,\ldots,N.
$$
Since $h_1$ and $h_2$ have no common factor, we have that $h_1 | a_j$
and $h_2 | b_j$, for $j=0,\ldots,N$. In turn, this implies that $g \in J_D$.
\medskip

Summing up, a polynomial $g$ of degree $d-1$ lies in $J_D$ if and only
if $(\cdot g)_m=0$. Since $J_D$ is generated in degree $d-1$, this
says that $J_D$ is recovered by the $R$-module
structure of $\rH^1_*(\cT_D)$, so that $J_D$ is determined by $\cT_D$.
For the last part of the statement, once we have proven that
$\cT_D$ determines the Jacobian ideal, we apply \cite[Theorem 1.1]{wang:jacobian}. 
\end{proof}

\section{Symmetric determinants}

\label{section:symmetric_determinants}
In this section we suppose that the field $\kk$ is of characteristic different from $2$.
Fixing an integer $n \ge 2$, we describe the ring $R$ as $R=\kk[x_{i,j} \mid 1 \le i \le j \le n]$, hence $N={n+1 \choose 2}-1$. For $1 \le i
\le j \le n$, put $x_{j,i}=x_{i,j}$ and let $\MM$ be the matrix
$(x_{i,j})_{1 \le i,j \le n}$.
Consider $F=\det(\MM)$.
The \textit{generic} symmetric determinant is the degree-$n$
hypersurface $D =\VV(F) \subset \PP^N$. It is singular along the
subscheme $\sing(D)$ cut by the $N+1$ minors of order $n-1$ of $\MM$
obtained by removing from $\MM$ the $i$-th line and $j$-th column, with
$1 \le i \le j \le n$.
Moreover,  $\sing(D)$ has codimension $3$ in $\PP^N$.

Consider now a projective plane $P \subset \PP^N$. The vector bundle
of $k$-th principal parts $\cE_k$ is defined as kernel of the natural
evaluation of sections $\rH^0(\cO_P(k)) \otimes \cO_P \to \cO_P(k)$.
The main goal of this section is to prove Theorem \ref{main:symmetric_determinant}, which establishes a link
between $\cT_D$ and $\cE_{n-1}$ that yields the stability of $\cT_D$.

\subsection{Proof of Theorem \ref{main:symmetric_determinant}}
  Define the graded algebra $A$ as quotient of $R$ by the
  homogeneous ideal generated by the minors of order $n-1$ of $\MM$.
  The minimal graded free resolution of $A$ is given by
  the Goto–J\'{o}zefiak–Tachibana
  complex, see \cite{jozefiak:symmetric,goto-tachibana}, see also
  \cite[\S 6.3.8]{weyman:tract}. This takes the form:
  \begin{equation}
    \label{joze}
    0 \la A \la R \la {n+1 \choose 2}.R(1-n) \la (n^2-1).R(-n) \la 
    {n \choose 2}.R(-1-n) \la 0,
  \end{equation}
  where the kernel of $A \la R$ is generated by the partial
  derivatives of $F$.
  The module $T_D(1-n)$ is the kernel of the resulting map $R \la {n+1
    \choose 2}.R(1-n)$ so its resolution is the truncation of the
  above resolution at the middle step. Upon sheafification, this gives item \ref{thereso}.
  
  Next, note that \ref{isstable} follows from
  \ref{isprincipal}. Indeed, by \cite{re:principal}, the vector bundle
  $\cE_{n-1}$ is slope-stable. Now, if $\cT_D$ had a destabilizing subsheaf
  $\cK$, then choosing $P$ to be a generic plane, transverse to the locus 
  where $\cT_D/\cK$ fails to be locally free, we would get a subsheaf
  $\cK|_P \subset \cT_D|_P$ with the same rank and slope as $\cK$, so
  that $\cK|_P$ would destabilize $\cE_{n-1}$, a contradiction.

  So it remains to prove \ref{isprincipal}. Note that, since $\sing(D)$ has
  codimension $3$ in $\PP^N$, we may choose $P$ disjoint from $\sing(D)$ so
  that $\cT_D|_P$ fits into:
  \begin{equation}
    \label{TDP}
      0 \to \cT_D|_P \to (N+1).\cO_P \to \cO_P(n-1) \to 0.    
  \end{equation}

  Note that $N+1=\rh^0(\cO_P(n-1))$ and observe that precomposing the
  evaluation of sections $\rH^0(\cO_P(n-1)) \otimes \cO_P \to
  \cO_P(n-1)$ with an automorphism of $(N+1).\kk$ we get a kernel
  bundle which is isomorphic to $\cE_{n-1}$. So it suffices to prove
  that, for generic $P$, the map $(N+1).\cO_P \to \cO_P(n-1)$
  appearing in \eqref{TDP} is the evaluation of global sections, up to
  precomposing with an isomorphism. But all such maps are the same up
  to precomposing with an isomorphism provided that they have maximal rank, so
  it is enough to prove that for generic $P$ we have
  $\rH^0(\cT_D|_P)=0$, or equivalently $\rH^1(\cT_D|_P)=0$.

  To achieve this, consider the coordinate ring $R_P$ of $P$.
  Taking the quotient by the homogeneous ideal generated by
  partial derivatives of $F$ we obtain a graded algebra of dimension $N-2$.
  Passing to the quotient modulo the ideal of $P$ we get thus an
  Artinian algebra $A$, whose resolution is just obtained by
  specialization of \eqref{joze}, hence:
  \[
    0 \la A \la R_P \la {n+1 \choose 2}.R_P(1-n) \la (n^2-1).R_P(-n) \la 
    {n \choose 2}.R_P(-1-n) \la 0.
  \]

  We get, for all $t \in \ZZ$, $\rH^1(\cT_D|_P(t-n+1)) \simeq
  A_t$. Computing dimension in the above display gives
  $A_t=0$ for all $t \ge n-1$ so $\rH^1(\cT_D|_P)=0$ and we are done.

\section{Determinants}

This section is devoted to the proof of stability of the logarithmic
tangent sheaf of the determinant divisor of a matrix of
indeterminates.
We work over an arbitrary field $\kk$. 

\subsection{Basic setup}

\label{section:determinants}
Let us fix an integer $n \ge 2$.
  Consider the graded ring $R=\kk[x_{i,j} \mid
1 \le i , j \le n]$ as coordinate ring of $\PP^N$ with $N=n^2-1$.
We call \textit{tautological determinant} the form:
\[
  F = \det((x_{i,j})_{1 \le i,j \le n}).
\]
The corresponding tautological determinantal hypersurface of
degree $n$ is the divisor:
\[
  D=\VV(F).
\]
The hypersurface $D \subset \PP^N$ is singular along the
subscheme $\sing(D)$ defined by Jacobian ideal of $F$, which in turn is
generated by the $n^2$ minors of order $n-1$ of the tautological
matrix of variables $(x_{i,j})_{1 \le i,j \le n}$.
The subscheme $\sing(D)$ has codimension $4$ in $\PP^N$.\\

In light of the following remark, all the constructions proposed in the remaining of this section can be considered for $n\ge 3$. Moreover, it explains why we have to suppose $n\ge 3$ for \S \ref{section:familiesdeterminants} as well.

\begin{Remark}\label{rmk-case2}
Consider the case $n=2$, for which $D$ is a smooth quadric in $\PP^3$. It is known (reported for example in \cite{angelini}) that the logarithmic vector bundle associated to any smooth quadric  in $\PP^3$ is $\Omega_{\PP^3}(1)$. Hence, in this case, the DK-Torelli property does not hold. 

The same happens when considering the symmetric case, in which the obtained divisors are smooth conics. Moreover, their defining equation is of Sebastiani-Thom type, that ensures once more that they are not DK-Torelli.
\end{Remark}

\subsubsection{Section outline}\label{sec-outline}
Our goal is to prove Theorem \ref{main:determinant}. Here are our main steps:
\begin{enumerate}[label=\roman*)]
\item \label{sketch-i} find a resolution of the module of global sections $T_D$;
\item \label{sketch-ii} prove that the logarithmic sheaf restricts to
  a quadric surface $X$, with $X \cap \sing(D) = \emptyset$, to the  bundle of
  principal parts $\cE_{n-1} = \ker(\rH^0(\cO_X(n-1))) \otimes \cO_X
  \to \cO_X(n-1)$;
\item \label{sketch-iii} prove that the principal part bundle $\cE_{n-1}$ of the quadric surface is slope-stable.
\end{enumerate}

\S \ref{section:logdet} is devoted to prove item \ref{sketch-i}.
In \S \ref{subsection:semigeneric}, we introduce the concept of
\textit{semigeneric matrix} which allows us, through a quadratic
Lefschetz property described in \S \ref{section:Lefschetz}, to
prove that $(\cT_D)_{|X} \simeq \cE_{n-1}$.
\S \ref{section:stabilityprincipalbundle} is devoted to prove
that $\cE_{n-1}$ is slope-stable. Finally, in \S
\ref{section:provethm5}, we combine all of the previous results to
prove that $\cT_D$ is slope-stable as well.

\subsubsection{An intrinsic setup} \label{intrinsic}

Let $U,V$ be two $n$-dimensional $\kk$-vector spaces and set:
\[
  \AAA = \Hom_\kk(V,U) \simeq V^* \otimes U.
\]
We identify $\PP^N$ with $\PP(\AAA)$, so an element
$[\ra]$ of $\PP(\AAA)$ is the proportionality class of $\ra \in \AAA^*
\simeq V \otimes U^*$,
that is, of a non-zero linear map $\ra : U \to V$.

An element of $[\rf]$ of $\PP(\End_\kk(\AAA))$ is thus the proportionality
class of an element $\rf \in \End_\kk(\AAA)$, which under the
identification $\rH^0(\cO_{\PP(\AAA)}(1))=\AAA$ can be seen as a map:
  \[
    M_\rf : U\otimes \cO_{\PP(\AAA)}(-1) \to V\otimes \cO_{\PP(\AAA)}.
  \]

  We denote by $\ri$ the special element $\ri=\id_\AAA \in
  \End_\kk(\AAA)$. In any given basis $(u_i\mid 1 \le i \le n)$ and
  $(v_i\mid 1 \le i \le n)$ of $U$ and $V^*$,
  setting $(x_{i,j} \mid 1 \le i,j \le n)$ for the dual basis of
  $(u_i \otimes v_j \mid i,j \le n)$, the matrix of $M_\ri$ is  
  $(x_{i,j})_{1 \le i,j \le n}$, so the tautological determinant is $D=D_\ri$.
 
\subsection{The resolution of $T_D$} \label{section:logdet}

Here we give a minimal graded free resolution of the graded module $T_D$ associated with
the sheaf $\cT_D$.
The resolution is obtained directly as a truncation of the
Gulliksen-Neg\r ard's complex. The upshot is that the projective
dimension of $T_D$ is $2$, one less than the codimension in $D$ of the
singular locus of $D$, in analogously with free divisors.

The divisor $D=\VV(\det(M_\ri)) \subset \PP(\AAA)$ is invariant with
respect to the action of the group $G=\SL(U) \times \SL(V)$ on
$\PP(\AAA)$. We seek a resolution of $T_D$ which is equivariant for
the induced action of $G$ on the polynomial ring $R$ seen as the
symmetric algebra of $\AAA$.

\begin{Proposition}\label{res-determinant}
  There is a minimal graded free  $G$-equivariant resolution of $T_D$
  of the form:
  \begin{equation}
    \label{varphi}
    0 \la T_D \la (\ssl(U) \oplus \ssl(V))\otimes R(-n)
    \xleftarrow{\varphi}  \AAA \otimes R(-n-1) \la R(-2n) \la 0.
  \end{equation}
    
\end{Proposition}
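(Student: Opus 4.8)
The plan is to obtain the resolution by truncating the Gulliksen–Negård complex, which resolves the ideal of maximal minors of a generic square matrix, and then to track the $G=\SL(U)\times\SL(V)$-equivariance through the Schur-functor description of its terms. Recall that the Gulliksen–Negård complex resolves the cokernel of $R \to A$, where $A$ is the quotient of $R$ by the ideal $I$ generated by the $(n-1)\times(n-1)$ minors of the tautological matrix $M_\ri$; since $\codim(\sing(D))=4$, this complex has length $4$ and takes the shape
\[
0 \la A \la R \la \AAA^* \otimes R(1-n) \la (\ssl(U)\oplus\ssl(V))\otimes R(-n) \la \AAA \otimes R(-n-1) \la R(-2n) \la 0,
\]
where the map $R \to \AAA^*\otimes R(1-n)$ is given by the $n^2$ partial derivatives of $F=\det(M_\ri)$, i.e. by the cofactor (adjugate) matrix, so its kernel is exactly $J_D(1-n)$, and hence the kernel of $A\la R$ is $J_D$. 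The first step is thus to \emph{recall and cite} the precise equivariant form of the Gulliksen–Negård complex (the middle term being $\ssl(U)\oplus\ssl(V)$, the trace-free endomorphisms, is the classical identification; see e.g. Weyman's book or the original Gulliksen–Negård paper), checking that the degrees match: the generators of $I$ have degree $n-1$, the syzygies among them have degree $n$, etc.

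The second step is to identify $T_D$ inside this complex. By definition $T_D=\rH^0_*(\cT_D)$ and $\cT_D$ sits in the sequence $0\to\cT_D\to (N+1).\cO_{\PP^N}\to\cJ_D(d-1)\to 0$ with $d=n$; passing to graded modules, $T_D(1-n)=\ker\bigl(\AAA^*\otimes R(1-n)\to J_D\bigr)$ — equivalently, $T_D(1-n)$ is the module of syzygies of the partial derivatives of $F$. But in the Gulliksen–Negård complex this kernel is precisely the image of the next map $(\ssl(U)\oplus\ssl(V))\otimes R(-n)\to \AAA^*\otimes R(1-n)$, so $T_D(1-n)$ is resolved by the tail
\[
0 \la T_D(1-n) \la (\ssl(U)\oplus\ssl(V))\otimes R(-n) \xleftarrow{\varphi} \AAA \otimes R(-n-1) \la R(-2n) \la 0.
\]
Twisting by $\cO(n-1)$, i.e. shifting degrees, and noting the twist only renames the shifts symmetrically (the exterior term $\AAA$ and the self-dual term $R(-2n)$ are unchanged as $G$-modules), yields the displayed resolution \eqref{varphi} after one records that the relevant twist is by $R(n-1)$ so that $(\ssl(U)\oplus\ssl(V))\otimes R(-n) \rightsquigarrow (\ssl(U)\oplus\ssl(V))\otimes R(-1)$ — here I should double-check the normalization in the statement, since as written the first syzygy module appears in degree $n$; this is simply the difference between resolving $T_D$ itself versus $T_D(1-n)$, and I will align conventions so that $\varphi$ is the Gulliksen–Negård differential restricted appropriately. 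Minimality is inherited: the Gulliksen–Negård complex is minimal (all maps have entries in the maximal ideal, the degree jumps being $1,1,1,1$ then the last being $n-1\ge 1$), hence so is any terminal segment, which gives that the projective dimension of $T_D$ is $2$.

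The third and final step is equivariance. The group $G=\SL(U)\times\SL(V)$ acts on $\AAA=V^*\otimes U$, hence on $R=\operatorname{Sym}(\AAA)$ and on $\PP(\AAA)$, and $F=\det(M_\ri)$ is a relative invariant (indeed $\SL\times\SL$-invariant), so $D$, $J_D$, $\cT_D$ and $T_D$ all carry natural $G$-actions, and the exact sequences above are sequences of graded $R$-modules with compatible $G$-action. The Gulliksen–Negård resolution of a generic matrix is itself $\GL(U)\times\GL(V)$-equivariant — its terms are, up to a line bundle twist, the Schur functors $S_{(1^{n-1},-1)}U\otimes S_{(1^{n-1},-1)}V^*$ etc., which restrict on $\SL$ to $\ssl(U)$, $\ssl(V)$, $\AAA$, and the trivial module as indicated — so restricting the structure group from $\GL\times\GL$ to $\SL\times\SL$ gives the claimed $G$-equivariant form. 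The main obstacle, and the only point requiring genuine care rather than bookkeeping, is pinning down the middle term: one must verify that the degree-$n$ first syzygies of the partial derivatives of $\det$ form exactly the $\SL(U)\times\SL(V)$-module $\ssl(U)\oplus\ssl(V)$ (the Lie-algebra action by left- and right-multiplication of trace-free matrices on $M_\ri$, modulo the scalar which is killed by tracelessness), and that this is precisely the $\varphi=0$-cohomology slot of Gulliksen–Negård; this identification is classical but deserves an explicit argument, e.g. by writing the syzygy $\sum_{i,j} (g M_\ri - M_\ri h)_{i,j}\,\partial F/\partial x_{i,j} = \operatorname{tr}(g)F - \operatorname{tr}(h)F = 0$ for $g\in\ssl(U)$, $h\in\ssl(V)$, and then matching dimensions against the Gulliksen–Negård Betti numbers to conclude there are no others.
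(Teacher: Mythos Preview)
Your approach is essentially identical to the paper's: cite the $G$-equivariant Gulliksen--Neg\r{a}rd complex as a resolution of $J_D$, identify the first syzygy module with (a twist of) $T_D$, and truncate; the paper does nothing more than this, referring to \cite{gulliksen-negard} and \cite[\S 6.1.8]{weyman:tract} for the equivariant form. Your hesitation about the twist is justified and correctly diagnosed: the shifts $(-n,-n-1,-2n)$ appearing in \eqref{varphi} are exactly those of the tail resolving $T_D(1-n)$ that you wrote down (compare the sheafified version in Theorem~\ref{main:determinant}(i), where the shifts are $(-1,-2,-n-1)$), so no further twist is required---the statement simply records the resolution before the shift by $n-1$.
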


Looking only at the homogeneous Betti numbers, the resolution reads:
  \[
    0 \la T_D \la 2(n^2-1).R(-n)
    \la n^2.R(-n-1) \la R(-2n) \la 0.
  \]

\begin{proof}
  Recall that the homogeneous Jacobian ideal $J_D$ is defined by the partial
  derivatives of $F=\det(M_\ri)$, where the matrix of the map $M_\ri$ is the 
  matrix of indeterminates $(x_{i,j})_{1 \le i,j \le n}$.
  This ideal is generated by the $n^2$ minors of order
  $n-1$ of $M_\ri$. Namely, there is a natural surjective map :
  \begin{equation}
    \label{IZD}
     \AAA^* \otimes R(1-n) \to J_D.
  \end{equation}

  This map is the first differential of the resolution of $J_D$ given
  by the Gulliksen-Neg\r ard complex, see \cite{gulliksen-negard} or
  \cite[\S 6.1.8]{weyman:tract}. This is a $G$-equivariant resolution
  that reads: 
  \[
  \begin{array}{rcl}
&  \ssl(U) \otimes R(-n)  \\ 
  0 \la J_D \la  \AAA^* \otimes R(1-n) \la    & \oplus & \la  \AAA \otimes  R(-n-1)
                                   \la R(-2n) \la 0. \\
   &  \ssl(V)\otimes R(-n) 
  \end{array}
  \]
  
  The resolution of $T_D$ is obtained 
  by truncation of the resolution of
  $J_D$, since $T_D$ is the kernel of the map \eqref{IZD}.
\end{proof}

\subsection{Simplicity of $\cT_D$} \label{section:simplicity}

  We consider the determinant hypersurface $D$.
  We would like to prove the following result.
  \begin{Lemma} \label{simple lemma}
    The sheaf $\cT_D$ is simple.
  \end{Lemma}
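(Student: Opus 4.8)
The plan is to show that $\End(\cT_D) \simeq \kk$ by using the $G$-equivariant resolution from Proposition \ref{res-determinant} to compute $\Hom(\cT_D,\cT_D)$, exploiting the representation theory of $G=\SL(U)\times\SL(V)$ acting on the terms of the resolution. First I would observe that $\cT_D$ is reflexive (being a second syzygy), so $\Hom(\cT_D,\cT_D) = \rH^0(\cHom(\cT_D,\cT_D))$ and, since $\sing(D)$ has codimension $4 \geq 3$, this equals the degree-$0$ part of $\Hom_R(T_D,T_D)$ where $T_D = \rH^0_*(\cT_D)$. I would then apply $\Hom_R(-,T_D)$ to the free resolution
\[
0 \la T_D \la (\ssl(U)\oplus\ssl(V))\otimes R(-n) \xleftarrow{\varphi} \AAA\otimes R(-n-1) \la R(-2n) \la 0
\]
to get that $\Hom_R(T_D,T_D)$ sits as the kernel of a map
\[
\Hom_R\big((\ssl(U)\oplus\ssl(V))\otimes R(-n),\, T_D\big) \xrightarrow{\varphi^*} \Hom_R\big(\AAA\otimes R(-n-1),\, T_D\big),
\]
i.e. inside $(\ssl(U)^*\oplus\ssl(V)^*)\otimes (T_D)_n$.

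Next I would compute $(T_D)_n$. From the resolution, $(T_D)_n$ is the kernel of $\varphi$ in degree $n$, namely the kernel of $(\ssl(U)\oplus\ssl(V)) \otimes R_0 \to \AAA\otimes R_1$; since $R_1 = \AAA$ and one checks (by the explicit form of $\varphi$ in Gulliksen–Negård, essentially the commutator/multiplication maps) that this degree-$n$ component of $\varphi$ is injective, we get $(T_D)_n = 0$... but that would force the wrong answer, so more care is needed: in fact $(T_D)_n$ must be computed as the degree-$n$ sections, and I expect it to be one-dimensional or to consist of the "scalar" logarithmic fields coming from $D$ being defined by a homogeneous form. The cleaner route: work with the graded piece in degree $0$ of $\Hom_R(T_D,T_D)$ directly. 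An element of degree $0$ is a map $(\ssl(U)\oplus\ssl(V))\otimes R(-n) \to T_D$ vanishing on the image of $R(-2n)\la \AAA\otimes R(-n-1)$, hence determined by a $G$-equivariant (for the component landing in the right graded piece) element of $(\ssl(U)^*\oplus\ssl(V)^*)\otimes (T_D)_n$ satisfying a compatibility. By $G$-equivariance of the whole setup and Schur's lemma, the identity endomorphism spans a copy of the trivial representation, and I would argue any other endomorphism would produce a nonzero $G$-equivariant homomorphism between summands that the representation theory forbids — concretely, $\Hom_G(\ssl(U),\ssl(V)) = 0$ since these are non-isomorphic irreducibles of $G$ (for $n\ge 3$; the case $n=2$ is excluded by Remark \ref{rmk-case2}), so the only equivariant endomorphisms of $\ssl(U)\oplus\ssl(V)$ are pairs of scalars, cutting the endomorphism space down to at most $\kk^2$, and then the single relation imposed by the map from $R(-2n)$ (or by indecomposability of $\cT_D$, which follows from $D$ not being a cone, cf. Remark \ref{rmk-cone}) forces the two scalars to agree.

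The main obstacle I expect is controlling $(T_D)_n$ and the precise equivariant structure of $\varphi$ in low degree well enough to rule out "off-diagonal" endomorphisms without a brute-force computation; the representation-theoretic input ($\ssl(U) \not\simeq \ssl(V)$ as $G$-modules, and $\AAA = V^*\otimes U$ containing no trivial summand for $n \ge 2$, while $\ssl(U)\otimes\ssl(U)^*$ does) is what makes this tractable. An alternative, possibly slicker, argument I would keep in reserve: use that $\cT_D$ is a direct summand complement is impossible because $D$ is not a cone (Remark \ref{rmk-cone} shows non-cone implies no $\cO_{\PP^N}$-summand, and more generally indecomposability), combined with the fact that a reflexive sheaf on $\PP^N$ whose restriction to a generic quadric surface $X$ is the simple bundle $\cE_{n-1}$ (item proven later, but logically one should avoid circularity here) would itself be simple — but since simplicity of $\cE_{n-1}$ is proved later and independently, the honest path is the equivariant-resolution computation above, which is characteristic-free as the paper emphasizes.
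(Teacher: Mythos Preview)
Your approach has a genuine gap. You want to use Schur's lemma to conclude that any endomorphism of $T_D$, when lifted to the minimal free resolution, induces a pair of scalars on $\ssl(U)\oplus\ssl(V)$. But Schur's lemma only applies to \emph{$G$-equivariant} maps, and an arbitrary graded $R$-module endomorphism of $T_D$ has no reason to be $G$-equivariant. Lifting such an endomorphism to the minimal resolution gives you $\kk$-linear maps $\alpha_0\in\End_\kk(\ssl(U)\oplus\ssl(V))$ and $\alpha_1\in\End_\kk(\AAA)$ satisfying $\alpha_0\circ\varphi=\varphi\circ\alpha_1$; showing this forces $\alpha_0$ to be a scalar is a genuine linear-algebra computation about $\varphi$, not a representation-theory one-liner. (Incidentally, your computation of $(T_D)_n$ is off: from the resolution the degree-$n$ piece of $\AAA\otimes R(-n-1)$ is zero, so $(T_D)_n\simeq\ssl(U)\oplus\ssl(V)$, not $0$; your confusion here comes from reversing the direction of $\varphi$.) A secondary issue is that in positive characteristic $\ssl(U)$ need not be irreducible (e.g.\ when $\cha(\kk)\mid n$), so the Schur argument is delicate even for equivariant maps, whereas the paper emphasizes that simplicity should hold over any field.

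The paper's proof takes a completely different and much cleaner route. It works with the short exact sequence $0\to\cT_D\to\AAA\otimes\cO_{\PP(\AAA)}\to\cI_Z(n-1)\to 0$, where $Z=\sing(D)$, and uses only cohomological input: the Cohen--Macaulayness of $R/I_Z$ gives vanishing of intermediate $\rH^p_*(\cI_Z)$, and Serre duality gives $\Hom(\cI_Z(n-1),\cO)=\Ext^1(\cI_Z(n-1),\cO)=0$. Applying $\Hom(\cI_Z(n-1),-)$ and $\Hom(-,\cT_D)$ to the sequence then yields $\End(\cT_D)\simeq\Ext^1(\cI_Z(n-1),\cT_D)\simeq\End(\cI_Z)\simeq\kk$. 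No representation theory, no lifts to resolutions, and the argument is manifestly characteristic-free. Your fallback about indecomposability is also insufficient: indecomposable does not imply simple, and Remark~\ref{rmk-cone} only rules out trivial summands, not non-scalar endomorphisms.
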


  \begin{proof}
Consider the singular locus $Z=\sing(D)$ and the ideal sheaf $\cI_Z=\cI_{Z/\PP(\AAA)}$.
Sheafifying the surjection \eqref{IZD} we get the exact sequence:
\begin{equation}
  \label{syz}
  0 \to \cT_D \to \AAA \otimes \cO_{\PP(\AAA)} \to \cI_Z(n-1) \to 0,
\end{equation}
where the surjection onto $\cI_Z(n-1)$ is the natural evaluation of
global sections, so that:
\begin{equation}
  \label{due}
\rH^0(\cT_D)=\rH^1(\cT_D)=0.  
\end{equation}
Since the surjection in \eqref{syz} is the sheafification of the epimorphism of
graded $R$-modules $\AAA \otimes R \to I_Z(n-1)$, actually we have
$\rH^1_*(\cT_D)=0$.
Next, recall that $R_Z=R/I_Z$ is a graded Cohen-Macaulay ring
of dimension $N-4$. This implies:
\begin{equation}
  \label{vanishingI}
  \rH^p_*(\cI_Z)=0,\qquad \mbox{for $p\in \ZZ \setminus \{0,N-3,N\}$}.
\end{equation}
Together with $\rH^1_*(\cT_D)=0$, 
this gives:
\begin{equation}
  \label{vanishingII}
  \rH^p_*(\cT_D)=0,\qquad \mbox{for $p\in \ZZ \setminus \{0,N-2,N\}$}.
\end{equation}

Next, note that Serre duality gives a natural isomorphism:
\begin{equation}
  \label{serreI}
  \rH^{N-p}(\cI_Z(t-N-1))^* \simeq
  \Ext^p(\cI_Z(t),\cO_{\PP(\AAA)}),\qquad \mbox{for all $p,t \in \ZZ$}.
\end{equation}

This implies $\Ext^1_{\PP(\AAA)}(\cI_Z(n-1),\cO_{\PP(\AAA)})=0$.
Also, we have
$\Hom_{\PP(\AAA)}(\cI_Z(n-1),\cO_{\PP(\AAA)})=\rH^0(\cO_{\PP(\AAA)}(1-n))=0$. 
Hence, 
applying
$\Hom_{\PP(\AAA)}(\cI_Z(n-1),-)$ to \eqref{syz},  we get:
\[
  \kk \simeq \End_{\PP(\AAA)}(\cI_Z) \simeq \Ext^1_{\PP(\AAA)}(\cI_Z(n-1),\cT_D).
\]
Applying $\Hom_{\PP(\AAA)}(-,\cT_D)$ to \eqref{syz} and using
\eqref{due}, we get:
\[\End_{\PP(\AAA)}(\cT_D) \simeq
  \Ext^1_{\PP(\AAA)}(\cI_Z(n-1),\cT_D) \simeq \kk.
\]
The lemma is proved.
  \end{proof}

\subsection{Semigeneric matrices} \label{subsection:semigeneric}

The next step is to choose a linear section $L \simeq \PP^3$ of $\PP^N$ which is
semigeneric, in a sense that we will make more precise in the next
paragraph. The goal of this partial genericity will be to ensure that,
for a honestly generic choice of $L$, the resulting quotient algebra
$A$ is Artinian and satisfies the quadratic
Lefschetz property, as we will see in \S \ref{section:Lefschetz}.

Restricting $M_\ri$ to $L$, 
we get an $n \times n$ matrix $M_L$ of linear forms on $L$, whose
$n^2$ minors of order $n-1$ generate the ideal $I_L \subset R_L = \kk[x_0,x_1,x_2,x_3]$ defining $A$.
Set $\fm_0=(x_1,x_2,x_3)$ and $\fm=(x_0,x_1,x_2,x_3)$.
In the next definition, we choose a basis of $U$ and $V$.

\begin{Definition}
We say that $L$ is a semigeneric section if:
\[
  M_L : n.\cO_L(-1) \to n.\cO_L \qquad \mbox {satisfies} \qquad M_L = M_0+x_0E_{1,1},
\]
where $M_0$ is generic in $R_0=\kk[x_1,x_2,x_3]$ and $E_{1,1}$ is the
elementary matrix $(E_{1,1})_{i,j}=\delta_{i,1}\delta_{j,1}$,
namely the forms ${(M_L)}_{i,j}$ lie
outside a Zariski closed subset of the set of all $n^2$-tuples of
$1$-forms in $R_0$, except for ${(M_L)}_{1,1}$ which also involves $x_0$.
In such case, we say that $M_L$ is a linear semigeneric matrix of size $n$.  
\end{Definition}

The goal of this subsection is to prove the following result. 

\begin{Proposition} \label{propn2}
  Let $M_L$ be a linear semigeneric matrix of size $n$. Then:
  \[
    I_L = x_0\fm_0^{n-2} + \fm_0^{n-1}.
  \]
\end{Proposition}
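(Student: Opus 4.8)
We must show that the ideal $I_L$ generated by the $(n-1)$-minors of $M_L = M_0 + x_0 E_{1,1}$ equals $x_0\fm_0^{n-2} + \fm_0^{n-1}$, where $M_0$ is a generic matrix of linear forms in $R_0 = \kk[x_1,x_2,x_3]$. The plan is to expand each minor in powers of $x_0$ using the Laplace/multilinearity expansion of the determinant along the single entry that is affected by $x_0$. Since only the $(1,1)$ entry of $M_L$ involves $x_0$, and it does so linearly, every $(n-1)$-minor of $M_L$ is either: (a) a minor not involving row $1$ or column $1$, hence a pure minor of $M_0$ of size $n-1$ with entries in $\fm_0^{n-1}$; or (b) a minor involving both row $1$ and column $1$, which splits as $m_0' + x_0 \cdot m_0''$, where $m_0'$ is the corresponding $(n-1)$-minor of $M_0$ (in $\fm_0^{n-1}$) and $m_0''$ is the complementary $(n-2)$-minor of $M_0$ obtained by also deleting row $1$ and column $1$ (in $\fm_0^{n-2}$). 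Minors involving row $1$ but not column $1$ (or vice versa) fall under case (a) as well. This immediately gives the inclusion $I_L \subseteq x_0\fm_0^{n-2} + \fm_0^{n-1}$ — but only after we know that the relevant minors of $M_0$ actually generate the full powers $\fm_0^{n-2}$ and $\fm_0^{n-1}$; generically they do, but $\fm_0^{n-1} \subseteq I_L$ is the cleaner half and I would record it first.

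**The reverse inclusion: the $x_0$-part.** The substantive direction is $x_0 \fm_0^{n-2} + \fm_0^{n-1} \subseteq I_L$. For the term $x_0\fm_0^{n-2}$: the $(n-2)$-minors $m_0''$ of the submatrix of $M_0$ obtained by deleting row $1$ and column $1$ form, generically, a set of generators of $\fm_0^{n-2}$ — this is the classical fact that the maximal minors of a generic $(n-1)\times(n-1)$ matrix of linear forms in three variables generate $\fm_0^{n-2}$ (equivalently, the scheme they cut out is the empty scheme / the degree-$(n-2)$ part of the maximal ideal), which I would cite or prove via the Eagon–Northcott / Gulliksen–Negård resolution specialized to three variables. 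Once $\fm_0^{n-2}$ is generated by the $m_0''$, I pick for each such minor the corresponding case-(b) minor $g = m_0' + x_0 m_0''$ of $M_L$; then $g \in I_L$ and, since $m_0' \in \fm_0^{n-1}$, working modulo $\fm_0^{n-1}$ we get $x_0 m_0'' \in I_L + \fm_0^{n-1}$. Combined with $\fm_0^{n-1} \subseteq I_L$ this yields $x_0 m_0'' \in I_L$ for each generator, hence $x_0\fm_0^{n-2} \subseteq I_L$.

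**The reverse inclusion: the $\fm_0^{n-1}$-part, and where the difficulty lies.** For $\fm_0^{n-1} \subseteq I_L$, I would argue that the pure $(n-1)$-minors of $M_0$ (cases (a) above, or the $m_0'$ from case (b) after subtracting the $x_0$ terms already shown to lie in $I_L$) generate $\fm_0^{n-1}$. Again this is a genericity statement about maximal minors of a generic $n \times n$ matrix of linear forms in $\kk[x_1,x_2,x_3]$: their ideal is $\fm_0^{n-1}$ because the determinantal variety of such a matrix, cut by size-$(n-1)$ minors, is empty in $\PP^2$ for generic choices (the expected codimension $4$ exceeds $\dim \PP^2 = 2$, forcing the supported scheme to be empty, while a Hilbert-function / resolution count pins the ideal down to exactly $\fm_0^{n-1}$ rather than a smaller colength). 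I expect the genericity bookkeeping to be the main obstacle: one must verify that the particular constrained shape $M_0 + x_0 E_{1,1}$ — with $M_0$ generic but with the extra-variable perturbation concentrated in a single entry — is still generic enough that (i) the size-$(n-1)$ and size-$(n-2)$ minors of the relevant submatrices of $M_0$ attain the full power ideals, and (ii) there are no unexpected cancellations or extra generators of lower degree. Concretely I would make the genericity precise by exhibiting one explicit $M_0$ (e.g. a Hankel-type or circulant-type matrix in $x_1,x_2,x_3$) for which the minors visibly generate $\fm_0^{n-2}$ and $\fm_0^{n-1}$, and then invoke semicontinuity: the condition "the minors generate the power of $\fm_0$ up to degree $d$" is Zariski open on the space of coefficient choices, so it holds for a dense open set containing the chosen point, which is what the word "semigeneric" encodes. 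With both inclusions in hand, $I_L = x_0\fm_0^{n-2} + \fm_0^{n-1}$ follows.
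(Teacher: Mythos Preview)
Your argument has a genuine circularity that you do not resolve. You propose to show $\fm_0^{n-1}\subseteq I_L$ first and then use it to get $x_0\fm_0^{n-2}\subseteq I_L$, but the only $(n-1)$-minors of $M_0$ that lie \emph{directly} in $I_L$ are those obtained by deleting row~$1$ or column~$1$ (your case~(a)); there are only $2n-1$ of them, while $\dim(\fm_0^{n-1})_{n-1}=\binom{n+1}{2}>2n-1$ for $n\ge 3$, so they cannot span. To get the remaining $[M_0]_{ij}$ into $I_L$ you must subtract $x_0 c_{ij}$, which already requires $x_0\fm_0^{n-2}\subseteq I_L$ --- and your argument for that inclusion in turn invokes $\fm_0^{n-1}\subseteq I_L$. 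Neither half can be established first.

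The paper breaks this deadlock by the dimension count you gesture at but do not carry through: since $I_L\subseteq x_0\fm_0^{n-2}+\fm_0^{n-1}$ and both sides are generated in degree $n-1$ by spaces of dimension at most $n^2$, it suffices to show that the $n^2$ minors $[M_L]_{ij}$ are \emph{linearly independent}. This single statement is equivalent to the conjunction of your Claim~1, your Claim~2, \emph{and} the transversality condition $\ker\alpha\cap\ker\beta=0$ (where $\alpha:e_{ij}\mapsto [M_0]_{ij}$ and $\beta:e_{ij}\mapsto c_{ij}$); Claims~1 and~2 alone do not imply it. The paper proves this linear independence by an extended geometric argument: it realizes $S=\VV(\det M_L)$ as the image of a plane blown up at $n(n-1)$ points, lifts the determinantal presentation of $\cL=\coker(M_L)$ to the blow-up, and shows injectivity of the map $\AAA^*\to\rH^0(\cO_{\PP^3}(n-1))$ by a vanishing computation on a rigid $(-1)$-curve (Lemmas~\ref{lemma-buplane}--\ref{KN}). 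Your alternative route --- exhibit an explicit $M_0$ for which the $n^2$ minors of $M_L$ are visibly independent and invoke semicontinuity --- is legitimate in principle, but you do not produce such an $M_0$, and doing so uniformly in $n$ is exactly the substantive difficulty the paper's geometry is there to handle.
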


\subsubsection{Semigeneric matrices and the blown-up plane}

Our first observation aimed at proving Proposition \ref{propn2} is that the determinant of a semigeneric matrix defines a
model of the blown-up plane at $n(n-1)$ points.

Put  $p_0=(1:0:0:0)$.
Define the threefold $T = \PP(\cO_{\PP^2}(1) \oplus
\cO_{\PP^2})$ and the natural maps $\pi : T \to \PP^2$ and
$\sigma : T \to \PP^3$ so that $\pi$ is the tautological
$\PP^1$-bundle and $\sigma$ is blow-up of $\PP^3$ at $p_0$.
Set $\fl$ (resp. $\fh$) for the pull-back to $T$ of a
hyperplane in $\PP^2$ (resp. in $\PP^3$).

\begin{Lemma}\label{lemma-buplane}
  If $M_L$ is semigeneric, the degree-$n$ surface $S=\VV(\det(M_L)) \subset L \simeq \PP^3$
  is the image of $\PP^2$ by the linear system of
  curves of degree 
  $n$ passing through a smooth complete intersection of $n(n-1)$
  points. The surface $S$ is smooth away from the $(n-1)$-tuple 
 $p_0$ and has a natural desingularization $\hat S$ which is an
 element of the linear system $|\cO_T((n-1)\fl+\fh))|$.
\end{Lemma}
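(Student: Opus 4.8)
The plan is to prove Lemma \ref{lemma-buplane} by making the semigeneric structure $M_L = M_0 + x_0 E_{1,1}$ completely explicit in geometric terms. First I would expand $\det(M_L)$ along the first row (or column) using multilinearity of the determinant in the entry $x_0$: since $x_0$ appears only in the $(1,1)$-slot, we get
\[
\det(M_L) = x_0 \cdot \Delta_{1,1}(M_0) + \det(M_0),
\]
where $\Delta_{1,1}(M_0)$ is the $(1,1)$-cofactor of $M_0$, a form of degree $n-1$ in $R_0 = \kk[x_1,x_2,x_3]$, and $\det(M_0)$ is a form of degree $n$ in $R_0$. Thus on $L \simeq \PP^3$ with coordinates $(x_0:x_1:x_2:x_3)$, the surface $S$ is $\VV(x_0 g_{n-1} + g_n)$ where $g_{n-1} = \Delta_{1,1}(M_0)$ and $g_n = \det(M_0)$, both pulled back from the $\PP^2$ with coordinates $(x_1:x_2:x_3)$. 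This already exhibits $S$ as a surface swept by the lines through $p_0 = (1:0:0:0)$ joining $p_0$ to points of $\PP^2$, i.e. as a projection from $p_0$.

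**Identifying $S$ with a projected image of $\PP^2$.**

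Next I would make the rational map $\PP^2 \dashrightarrow \PP^3$ precise. Consider the linear system on $\PP^2$ spanned by $\{g_n, x_1 g_{n-1}, x_2 g_{n-1}, x_3 g_{n-1}\}$ — more invariantly, the degree-$n$ forms of the shape $\ell \cdot g_{n-1}$ for $\ell$ linear, together with $g_n$. For generic $M_0$, the forms $g_{n-1}$ and $g_n$ have no common factor, so this is a sublinear system of $|\cO_{\PP^2}(n)|$ of (projective) dimension $3$, whose base locus is the intersection $\VV(g_{n-1}) \cap \VV(g_n)$, a smooth complete intersection of $(n-1) \cdot n$ points for generic choice (one checks genericity of $M_0$ forces $g_{n-1}$, $g_n$ to be a smooth curve of degree $n-1$ meeting a smooth curve of degree $n$ transversely). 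The associated morphism resolves after blowing up these $n(n-1)$ points, and I claim its image is exactly $S$: a point $(x_1:x_2:x_3)$ off the base locus maps to $(g_n : x_1 g_{n-1} : x_2 g_{n-1} : x_3 g_{n-1}) = (g_n/g_{n-1} : x_1 : x_2 : x_3)$, which visibly satisfies $x_0 g_{n-1} + g_n = 0$ after clearing. Conversely every point of $S$ away from $p_0$ has $g_{n-1}(x_1,x_2,x_3) \ne 0$ generically and is hit this way. So $S$ is the image of $\PP^2$ under the linear system of degree-$n$ curves through those $n(n-1)$ points, as claimed.

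**The desingularization inside $T$.**

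Finally I would realize the resolution inside $T = \PP(\cO_{\PP^2}(1) \oplus \cO_{\PP^2})$. The point is that $T$ is simultaneously the $\PP^1$-bundle $\pi : T \to \PP^2$ and the blow-up $\sigma : T \to \PP^3$ of $p_0$, with $\Pic(T) = \ZZ\fl \oplus \ZZ\fh$ and $\sigma^*\cO_{\PP^3}(1) = \fh$, $\pi^*\cO_{\PP^2}(1) = \fl$. One has the standard relations coming from the projective bundle: sections of $\cO_T(a\fl + b\fh)$ correspond, via $\pi_*$, to $\bigoplus_{k=0}^{b} \rH^0(\cO_{\PP^2}(a + k))$ once one identifies the relative $\cO(1)$ on $T$ with $\fh$ (tautological on the $\cO_{\PP^2}(1) \oplus \cO_{\PP^2}$ side, noting $\fh = \fl + (\text{relative hyperplane})$ or the appropriate normalization). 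The strict transform $\hat S = \overline{\sigma^{-1}(S \setminus \{p_0\})}$ is an effective divisor on $T$; computing its class from $\sigma^*\cO_{\PP^3}(n) = n\fh$ and subtracting the multiplicity $(n-1)$ along the exceptional divisor $\fe$ of $\sigma$ (which is the section of $\pi$ given by the $\cO_{\PP^2} \subset \cO_{\PP^2}(1)\oplus\cO_{\PP^2}$ summand, and which satisfies $\fe = \fh - \fl$ in $\Pic(T)$), one gets $\hat S \sim n\fh - (n-1)\fe = n\fh - (n-1)(\fh-\fl) = \fh + (n-1)\fl = (n-1)\fl + \fh$. Then $\hat S$ is smooth because it is cut out on $T$ by the vanishing of the section corresponding to $x_0 g_{n-1} + g_n$ under the $\pi_*$-description (a "relative line" on the $\PP^1$-bundle twisted by the curves of degree $n-1$ through the base points), and a generic such section in this globally generated system is smooth by Bertini applied on $T$; smoothness of $\hat S$ can also be checked directly from genericity of $M_0$ — the only possible singularities of $S$ lie over the base points or at $p_0$, and both are resolved on $T$. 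This gives $\hat S \in |\cO_T((n-1)\fl + \fh)|$ as asserted.

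**Main obstacle.**

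The step I expect to be the main obstacle is the bookkeeping in $\Pic(T)$ — pinning down exactly which twist $\cO_T(a\fl + b\fh)$ contains $\hat S$, since this depends on the normalization convention for the relative $\cO(1)$ of $\PP(\cO_{\PP^2}(1)\oplus\cO_{\PP^2})$ and on the precise class of the exceptional divisor $\fe$ of the blow-up $\sigma$ in terms of $\fl$ and $\fh$. Getting the multiplicity of $S$ along $\fe$ right (it should be exactly $n-1$, matching the degree of the cofactor $g_{n-1}$, which is the multiplicity of $S$ at $p_0$ — hence the phrase "$(n-1)$-tuple $p_0$") and then checking that the residual class is $(n-1)\fl + \fh$ rather than some other combination is where a sign or an off-by-one would most easily creep in; I would double-check it by an independent degree count (the image of $\hat S$ in $\PP^3$ under $\sigma$ must have degree $n$, so $\hat S \cdot \fh^2 = n$, and $((n-1)\fl + \fh)\cdot \fh^2 = n$ using $\fh^3 = 1$, $\fl\cdot\fh^2 = 1$, $\fl^2\fh = 0$ — consistent) and by the adjunction/self-intersection numbers needed downstream in \S\ref{subsection:semigeneric}.
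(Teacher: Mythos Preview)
Your proof is correct and essentially the same as the paper's: both expand $\det(M_L) = x_0\det(M_1) + \det(M_0)$ to see the $(n-1)$-fold point at $p_0$, identify the inverse of the projection from $p_0$ with the complete linear system $|\cI_{W/\PP^2}(n)|$ through the $n(n-1)$ points $W = \VV(\det M_0) \cap \VV(\det M_1)$, and place $\hat S$ in $T$ (the paper via projectivizing the surjection $\cO_{\PP^2}(1)\oplus\cO_{\PP^2} \twoheadrightarrow \cI_{W/\PP^2}(n)$ from the two-term resolution of $\cI_W(n)$, you via the strict transform under $\sigma$ and the class computation $n\fh-(n-1)\fe$). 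One harmless slip in your degree check: $\fl^2\fh = 1$, not $0$, since $\fl^2$ is a $\pi$-fibre on which $\fh$ restricts to $\cO_{\PP^1}(1)$; your verification only uses $\fh^3=\fl\fh^2=1$ anyway.
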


\begin{proof}
  The shape of $M$ implies, by multilinearity of the determinant:
  \[
    \det(M)=\det(M_0)+x_0\det(M_1), \qquad \mbox{with:} \qquad M_1=(M_0)_{2 \le i,j \le n}.
  \]
  
  Therefore $p_0$ is a point of multiplicity $n-1$ of $S$.
  Working over $\PP^2=\Proj(R_0)$ we note that,
  if $M_0$ is general enough, we may assume that the curves $G_0=\VV(\det(M_0))$ and
  $G_1=\VV(\det(M_1))$ in $\PP^2$ are smooth of degree $n$ and $n-1$ and meet
  transversely 
  at a subscheme $W \subset \PP^2$ consisting of $n(n-1)$ reduced
  points. We have:
  \begin{equation}
    \label{resolutionW}
    0 \to \cO_{\PP^2}(1-n) \to \cO_{\PP^2}(1) \oplus     \cO_{\PP^2} \to I_{W/\PP^2}(n) \to 0.
  \end{equation}

  Therefore $S$ is smooth away from $p_0$ and the projection away from $p_0$ sends $S$ birationally
  to $\PP^2$. The inverse map is defined by the complete linear system
  $|\cI_{W/\PP^2}(n)|$.

  Define the surface $\hat S$ as the blow-up of $\PP^2$ at $W$.
  We have $\hat S \simeq \PP(\cI_{W/\PP^2}(n))$, where the linear
  system associated to the tautological relatively ample
  line bundle sends $\hat S$ to $S \subset \PP^3$.
  The smooth surface $\hat S$ sits canonically in $T$, the embedding
  being defined by the projectivization of the surjection in \eqref{resolutionW}.
\end{proof}

The restriction of $\sigma$, $\pi$, $\fl$ and $\fh$ to $\hat S$ define
objects which we denote by the same letters. We set $\fe_\pi$ for the exceptional
divisor of $\pi : \hat S \to \PP^2$, hence we have:
\begin{equation}
  \label{formule-h}
  \fh = n\fl-\fe_\pi.
\end{equation}
For all $i,j \in \ZZ$, the cohomology of $\cO_T(i \fh + j \fl)$ is:
\begin{equation}
  \label{linebundlesT}
  \rH^k(\cO_T(i \fh + j \fl)) \simeq \left\{
    \begin{array}{ll}
      \bigoplus_{0 \le u \le i} \rH^k(\cO_{\PP^2}(j+u)), & \mbox{if $i \ge 0$},\\
      \bigoplus_{i+1 \le u \le -1} \rH^{k+1}(\cO_{\PP^2}(j+u)), & \mbox{if $i \le -2$},\\
      0, & \mbox{if $i = -1$}.
    \end{array}\right.
\end{equation}
Also, the cohomology of $\cO_{\hat S}(i \fh + j \fl)$ is computed via \eqref{linebundlesT}
by
induction on $i$ and $j$ from
the exact sequence:
\begin{equation}
  \label{linebundlesS}
  0 \to \cO_T((1-n)\fl -\fh) \oplus \cO_T(\fl-\fh) \to \cO_T(\fl) \oplus   \cO_T \to \cO_{\hat S}(\fh) \to 0.  
\end{equation}

\subsubsection{Linear determinantal representation of the blown-up plane}

Set $\cL=\coker(M_L)$ and write:
\begin{equation}
  \label{cokerML}
  0 \to n.\cO_L(-1) \xrightarrow{M_L} n.\cO_L \to \cL \to 0.
\end{equation}

Here, $\cL$ is a reflexive sheaf of rank $1$ on $S$ which is not
locally free.
The next lemma allows to determine an exact sequence on $\hat S$, where the rightmost term is a line bundle $\hat \cL$ on $\hat S$. Moreover, it is possible to lift such sequence to the threefold $T$, in order to define $\hat \cL$ as
the determinant of a map of bundles or rank-$n$ over $T$, whose
push-forward to $\PP^3$ is the semigeneric matrix $M_L$ we started with.

\begin{Lemma} \label{hatcM}
There is a subset $\fe_1,\ldots,\fe_m$, with $m=n(n-1)/2$, of the components of 
$\fe_1 + \ldots + \fe_{n(n-1)} = \fe_\pi$, such
that $\hat \cL = (n-1)\fl - \fe_1 - \cdots -\fe_m$ fits into: 
\begin{equation}
  \label{cokerMLn}
  0 \to \cO_T(-\fh) \oplus (n-1). \cO_T(-\fl) \to n. \cO_T \to \hat \cL \to 0.   \end{equation}
Moreover, the push-forward to $L \simeq \PP^3$ of the above sequence is \eqref{cokerML}.
\end{Lemma}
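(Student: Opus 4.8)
The plan is to realize the semigeneric matrix $M_L$ as the push-forward along $\sigma$ of an honest map of vector bundles on the threefold $T$, whose cokernel will be $\hat\cL$, and then to pin down the class of $\hat\cL$ by a Chern-character computation.

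First I construct the lift. By definition $M_L=M_0+x_0E_{1,1}$ with $M_0$ having entries in $R_0=\kk[x_1,x_2,x_3]$, so the first column of $M_L$ consists of linear forms on $L\simeq\PP^3$ (possibly involving $x_0$), i.e.\ of sections of $\cO_T(\fh)=\sigma^*\cO_{\PP^3}(1)$, while columns $2,\dots,n$ only involve $x_1,x_2,x_3$ and are therefore pull-backs along the projection $\pi$ from $p_0$ of linear forms on $\PP^2$, i.e.\ sections of $\cO_T(\fl)=\pi^*\cO_{\PP^2}(1)$. This gives a map $\tilde M:\cO_T(-\fh)\oplus(n-1).\cO_T(-\fl)\to n.\cO_T$ with $\sigma^*M_L=\tilde M\circ\diag(1,s_E,\dots,s_E)$, where $s_E$ cuts out the exceptional divisor $E$ of $\sigma$; here one uses $\sigma^*\ell=s_E\cdot\pi^*\ell$ for $\ell\in R_0$ linear. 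Since $\sigma^*S=\hat S+(n-1)E$ together with $[\hat S]=\fh+(n-1)\fl$ (Lemma \ref{lemma-buplane}) forces $E=\fh-\fl$ in $\Pic(T)$, taking determinants gives $\det(\sigma^*M_L)=\det(\tilde M)\cdot s_E^{\,n-1}$, hence $\det(\tilde M)\in\rH^0(\cO_T(\fh+(n-1)\fl))$ has zero divisor exactly $\hat S$. A map of vector bundles of equal rank whose determinant vanishes to order one along a smooth divisor has corank one there, so $\hat\cL:=\coker(\tilde M)$ is a line bundle on the smooth surface $\hat S$, and \eqref{cokerMLn} is established.

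Next I push \eqref{cokerMLn} forward along $\sigma$. From $R^{>0}\sigma_*\cO_T=0$ we get $\sigma_*\cO_T(-\fh)=\cO_{\PP^3}(-1)$ with vanishing $R^1$, and from $\cO_T(-\fl)=\sigma^*\cO_{\PP^3}(-1)\otimes\cO_T(E)$ together with $\sigma_*\cO_T(E)=\cO_{\PP^3}$ and $R^1\sigma_*\cO_T(E)=0$ we get $\sigma_*\cO_T(-\fl)=\cO_{\PP^3}(-1)$, again with vanishing $R^1$. The identification $\cO_{\PP^3}(-1)\xrightarrow{\sim}\sigma_*\cO_T(-\fl)$ is the push-forward of multiplication by $s_E$, and since $\sigma^*x_i=s_E\cdot\pi^*x_i$ it turns multiplication by $\pi^*x_i$ on $\cO_T(-\fl)$ into multiplication by $x_i$; hence $\sigma_*\tilde M=M_L$. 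As the source of \eqref{cokerMLn} has no higher direct image, applying $\sigma_*$ yields exactly \eqref{cokerML}, with $\sigma_*\hat\cL=\cL$.

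Finally I determine the class of $\hat\cL$ on $\hat S=\mathrm{Bl}_W\PP^2$. Writing $\hat\cL=i_*\cO_{\hat S}(C)$ with $C=a\fl-\sum_k b_k\fe_k$, the Chern character of $\hat\cL$ on $T$ is, from \eqref{cokerMLn}, $\mathrm{ch}(\hat\cL)=n-e^{-\fh}-(n-1)e^{-\fl}$, which I expand using $\fl^3=0$ and $\fh^2=\fl\fh$. On the other hand Grothendieck--Riemann--Roch gives $\mathrm{ch}(\hat\cL)=i_*\!\bigl(\mathrm{ch}(\cO_{\hat S}(C))\,\mathrm{td}(\cO_{\hat S}(\hat S))^{-1}\bigr)$, which I compute on $\hat S$ using $\fh|_{\hat S}=n\fl-\fe_\pi$, $\fl^2=1$, $\fl\cdot\fe_k=0$, $\fe_j\cdot\fe_k=-\delta_{jk}$ and the push-forwards $i_*(\fl|_{\hat S})$, $i_*(\fe_k)$ into $H^*(T)$. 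Matching the degree-one parts gives $a=n-1$; the degree-two parts give $\sum_k b_k=m$; the degree-three parts give $\sum_k b_k^2=m$. Subtracting, $\sum_k b_k(b_k-1)=0$, which forces $b_k\in\{0,1\}$ for every $k$, with exactly $m=n(n-1)/2$ of them equal to $1$; relabelling so these are $\fe_1,\dots,\fe_m$ yields $\hat\cL=(n-1)\fl-\fe_1-\cdots-\fe_m$. (As a cross-check, pushing \eqref{cokerMLn} forward along $\pi$ identifies $\pi_*\hat\cL$ with the cokernel of the $n\times(n-1)$ matrix formed by columns $2,\dots,n$ of $M_0$, hence with $\cI_{Z'}(n-1)$ for a reduced $0$-dimensional $Z'$ of length $\binom n2$ by Hilbert--Burch, and local freeness of $\hat\cL$ forces $Z'\subset W$.) The part I expect to need the most care is the push-forward bookkeeping — arranging the lift so that $\sigma_*\tilde M$ is literally $M_L$, which fixes the placement of the $\cO_T(-\fh)$ summand and rests on the comparison $\sigma^*x_i=s_E\cdot\pi^*x_i$ — after which the identification of $\coker(\tilde M)$ and the Chern-character computation are routine.
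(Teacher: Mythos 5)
Your proof is correct, but it takes a genuinely different route from the paper's. You lift $M_L$ itself to the threefold $T$, dividing the columns with entries in $\kk[x_1,x_2,x_3]$ by the equation of the exceptional divisor $E\equiv\fh-\fl$, observe that the determinant of the lifted matrix cuts exactly the smooth strict transform $\hat S$ (so the cokernel is a line bundle on $\hat S$: corank at least $2$ at a point would force the determinantal divisor to be singular there, by the cofactor expansion of the differential of the determinant), and then pin down its class by Grothendieck--Riemann--Roch together with the integrality trick $\sum_k b_k(b_k-1)=0$; the push-forward bookkeeping via $\sigma^*x_i=s_E\cdot\pi^*x_i$ is also fine. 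The paper instead works on $\PP^2$: it uses the marked row/column of the semigeneric matrix to single out a section $u\in U^*$ of $\cL_0^*(n-1)$ whose zero scheme $W_u$ is half of $W=G_0\cap G_1$, pulls back the Hilbert--Burch resolution of $\cI_{W_u/\PP^2}(n-1)$ to $\hat S$, removes torsion and lifts an extension (using $\rH^1(\cO_{\hat S}(\fh-\fl))=0$) to reach \eqref{cokerMLn}. Your argument is shorter and more self-contained for the statement as such; what it does not deliver is the identification of the chosen exceptional curves as precisely those lying over $W_u$, nor the intermediate sequences \eqref{hatM1}, \eqref{hatM2}, \eqref{cK}, \eqref{lunga1}, and this finer output of the paper's proof is what is actually used downstream (Lemma \ref{KN} and the proof of Proposition \ref{propn2} rely on $U_u$, $V_v$ and the relation of $W_u$ to $G_0\cap G_1$, not merely on the conclusion of the lemma). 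Two small bookkeeping remarks: the degree-one parts of the two Chern characters only record the support class $\fh+(n-1)\fl$ and impose no condition; it is the degree-two part (coefficients of $\fl\fh$ and $\fl^2$, which are independent curve classes on $T$) that yields both $a=n-1$ and $\sum_k b_k=m$, and the degree-three part that yields $\sum_k b_k^2=m$ -- the equations you need do come out, just not from the graded pieces you name. Also, identifying the determinantal divisor of your lift with the surface of Lemma \ref{lemma-buplane} implicitly uses that the strict transform of $S$ in $T$ coincides with that $\hat S$; this is true (both are the closure of $\sigma^{-1}(S\setminus\{p_0\})$) but deserves a line.
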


\begin{proof}
Let us use the notation described in \S \ref{intrinsic} and also in the proof of Lemma \ref{lemma-buplane}. We therefore consider the
matrices $M_L$ and $M_0$ as morphisms:
\[M_L : U \otimes \cO_{L}(-1) \to V \otimes \cO_{L},
  \qquad 
  M_0 : U \otimes \cO_{\PP^2}(-1) \to V \otimes \cO_{\PP^2}.
\]
The sheaf $\cL_0 = \coker(M_0)$ is 
a line bundle supported on the curve $G_0 \subset \PP^2$. Transposing
$M_0$, by Grothendieck duality we get:
\[
    0 \to V^* \otimes \cO_{\PP^2}(-1) \to U^* \otimes \cO_{\PP^2} \to
    \cL_0^*(n-1) \to 0.
\]
We observe that a non-zero global section $u$ of $\cL_0^*(n-1)$ is given
uniquely by a 
non-zero element $u \in U^*$ and provides thus a $1$-codimensional
quotient $U^*_u=U^* / \kk u$. The section $u$ vanishes along   
a subscheme $W_u$ of $\PP^2$ of length
$m=n(n-1)/2$ which is contained in $G_0$ and we have a resolution:
\begin{equation}
  \label{resolution_IW}
  0 \to U_u \otimes \cO_{\PP^2}(-1) \to V \otimes \cO_{\PP^2} \to
  \cI_{W_u/\PP^2}(n-1) \to 0.
\end{equation}
Since $M_L$ is semigeneric, there are two $1$-dimensional
 marked subspaces of $V$ and $U^*$, corresponding to the first row and column of $M$.
We choose $u \in U^*$ to lie in this space, so that $W_u$ is a
subscheme of half the length of $G_0 \cap G_1 = W$.

\medskip

Set $\fe_u$ for the union of the $m$ components
$(\fe_1,\ldots,\fe_m)$ of $\fe_1 + \ldots + \fe_{n(n-1)} = \fe_\pi$ which are contracted to $W_u$ by
$\pi$ and write $\fe_{\bar u}=\fe_\pi - \fe_u$.
Put $\hat \cL=\cO_{\hat S}((n-1)\fl-\fe_u)$. Pulling
back \eqref{resolution_IW} to $\hat S$ via $\pi$ and removing the
torsion part $\cO_{\fe_u}(-1)$ of $\pi^*(\cI_{W_u/\PP^2}(n-1))$  we get the exact
sequences:
\begin{align}
  \label{hatM1}&0 \to \cK \to V \otimes \cO_{\hat S} \to \hat \cL
    \to 0,\\
  \label{hatM2} &0 \to U_u\otimes \cO_{\hat S}(-\fl) \to \cK \to \cO_{\fe_u}(-1) \to 0.
\end{align}
where the surjection in the first sequence is the natural evaluation
of global sections of $\hat \cL$
and $\cK$ is defined as the kernel of this map.
Since $\cO_{\fe_u}(\fh) \simeq \cO_{\fe_u}(1)$, we have:
\[
0 \to \cO_{\hat S}(-\fh-\fe_u) \to  \cO_{\hat S}(-\fh) \to
\cO_{\fe_u}(-1) \to 0.
\]
Observe that $\cO_{\hat S}(-\fh-\fe_u) \simeq \hat \cL((1-n)\fl-\fh)$. By
\eqref{linebundlesT} we have $\rH^1(\cO_{\hat S}(\fh-\fl))=0$, hence the above surjection lifts to
$\cO_{\hat S}(-\fh) \to \cK$. Patching this with \eqref{hatM2} we
get:
\begin{equation}
  \label{cK}
  0 \to  \hat \cL((1-n)\fl-\fh) \to \cO_{\hat S}(-\fh) \oplus U_u \otimes
  \cO_{\hat S}(-\fl) \to \cK \to 0.
\end{equation}
  We rewrite this as a long exact sequence: 
  \begin{equation}
    \label{lunga1}
  0 \to  \hat \cL((1-n)\fl-\fh) \to \cO_{\hat S}(-\fh) \oplus U_u \otimes
  \cO_{\hat S}(-\fl) \to V \otimes \cO_{\hat S} \to \hat \cL \to 0,
  \end{equation}
where the sheaf $\cK$ is the image of the middle map.
Such map lifts to the threefold $T$ and we have the determinantal
representation of $\hat \cL$ :
\begin{equation}
  \label{detM}
  0 \to \cO_T(-\fh) \oplus U_u \otimes \cO_T(-\fl) \to V \otimes \cO_T \to \hat \cL \to 0.  
\end{equation}
This is precisely \eqref{cokerMLn}. Also, we have
$\sigma_*(\cO_T(-\fl)) \simeq \sigma_*(\cO_T(-\fh)) \simeq \cO_L(-1)$ and
$\sigma_*(\cO_T) \simeq \cO_L$. The functor $\sigma_*$ sends maps
$\cO_T(-\fh) \to \cO_T$ to linear forms and maps
$\cO_T(-\fl) \to \cO_T$ to linear forms which vanish at $p_0$ and
each coefficient of the matrix appearing in \eqref{cokerMLn} is mapped
via $\sigma_*$ to the corresponding coefficient of $M$. Therefore,
$\sigma_*$ sends $\hat \cL$ to $\cL$ and the lemma is proved.
\end{proof}

\subsubsection{The rigid curve}

Set $\fg$ for the strict transform of $G_1$ in $\hat S$, so that $\fg$ is smooth and:
\[
  \fg \in |\cO_{\hat S}((n-1)\fl - \fe_\pi) |, \qquad \fg^2=1-n, \qquad 
  \fg \cdot \fh = 0, \qquad \rH^0(\cO_{\hat S}(\fg))
  \simeq \kk.
\]
More precisely note that $\fg+\fl \equiv \fh$, moreover
$\deg(\fh|_\fg)=0$ and $\rh^0(\hat S,\fh)=4 > 3=\rh^0(\hat S,\fl)$, so
that $\rh^0(\fg,\fh|_\fg) \ne 0$. Therefore we have:
\begin{equation}
  \label{g=-l}
  \fg|_\fg \equiv -\fl|_\fg.  
\end{equation}

We call $\fg$ the \textit{rigid curve} of $\hat S$. We 
analyze the restriction of $\cL$ and $\cK$ to the rigid curve.
We would like to prove :
\[
  \rH^0(\fg,\cK^* \otimes \hat \cL(\fg-\fl)|_\fg)=0.
\]
Write $\cN = \hat \cL|_\fg$, so 
$\cN \simeq \cO_\fg((n-1)\fl-e_u)$.
Since $\fg|_\fg \equiv -\fl|_\fg$, it suffices to prove the following lemma.
\begin{Lemma} \label{KN}
  We have:
    \begin{equation}
    \label{-2l}
    \rH^0(\fg,\cK^*|_\fg \otimes \cN(-2\fl))=0.    
  \end{equation}
\end{Lemma}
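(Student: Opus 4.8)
The plan is to read off the group $\rH^0(\fg,\cK^*|_\fg\otimes\cN(-2\fl))=\Hom_{\cO_\fg}(\cK|_\fg,\cN(-2\fl))$ directly from the presentation \eqref{hatM2} of $\cK$, rather than from \eqref{hatM1} or \eqref{cK}. Those two sequences would force one to control $\rH^0(\fg,\cN(-2\fl))$, whose degree is already nonnegative once $n$ is large; working with \eqref{hatM2} instead collapses the whole statement onto a single $n$-independent vanishing.

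First I would restrict \eqref{hatM2} to the rigid curve $\fg$. Being the kernel of the map of locally free sheaves in \eqref{hatM1}, the sheaf $\cK$ is reflexive, hence locally free on the smooth surface $\hat S$; in particular $\operatorname{Tor}_1^{\cO_{\hat S}}(\cK,\cO_\fg)=0$. Tensoring \eqref{hatM2} with $\cO_\fg$, the only possibly nonzero higher $\operatorname{Tor}$-term is $\operatorname{Tor}_1^{\cO_{\hat S}}(\cO_{\fe_u}(-1),\cO_\fg)$, which is supported on the finite set $\fe_u\cap\fg$, hence torsion; since it maps into the torsion-free sheaf $U_u\otimes\cO_\fg(-\fl)$, it must vanish. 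This leaves a short exact sequence on $\fg$
\[
  0\to U_u\otimes\cO_\fg(-\fl)\to \cK|_\fg\to Q\to 0,
\]
with $Q$ torsion, supported on $\fe_u\cap\fg$. Applying $\Hom_{\cO_\fg}(-,\cN(-2\fl))$ and using that $\cN(-2\fl)$ is a line bundle on the integral curve $\fg$, the term $\Hom_{\cO_\fg}(Q,\cN(-2\fl))$ vanishes, so
\[
  \rH^0(\fg,\cK^*|_\fg\otimes\cN(-2\fl))\hookrightarrow \Hom_{\cO_\fg}\bigl(U_u\otimes\cO_\fg(-\fl),\cN(-2\fl)\bigr)=U_u^*\otimes\rH^0\bigl(\fg,\cN(-\fl)\bigr).
\]
Thus it suffices to prove $\rH^0(\fg,\cN(-\fl))=0$.

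For this last vanishing I would use the explicit description of $\cN$. Since $\hat\cL=\cO_{\hat S}((n-1)\fl-\fe_u)$ and $\cN=\hat\cL|_\fg$, we have $\cN(-\fl)=\cO_\fg((n-2)\fl-\fe_u)$. Now $\pi$ identifies $\fg$ with $G_1$, under which $\fl|_\fg$ becomes the hyperplane class of $G_1\subset\PP^2$ and $\fe_u|_\fg$ becomes the reduced divisor $W_u\subset G_1$ (recall $W_u\subset W\subset G_1$, with $W$ reduced by genericity of $M_0$). Hence $\cN(-\fl)\simeq\cI_{W_u/G_1}(n-2)$. Comparing sections on $G_1$ and on $\PP^2$ through the sequence $0\to\cO_{\PP^2}(-1)\to\cI_{W_u/\PP^2}(n-2)\to\cI_{W_u/G_1}(n-2)\to0$ (using $\cI_{G_1/\PP^2}\simeq\cO_{\PP^2}(1-n)$ together with $\rH^0(\cO_{\PP^2}(-1))=\rH^1(\cO_{\PP^2}(-1))=0$) yields $\rH^0(\fg,\cN(-\fl))=\rH^0(\PP^2,\cI_{W_u/\PP^2}(n-2))$. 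Finally the resolution \eqref{resolution_IW} exhibits $\cI_{W_u/\PP^2}$ as generated in degree $n-1$, so $\rH^0(\PP^2,\cI_{W_u/\PP^2}(t))=0$ for every $t\le n-2$; taking $t=n-2$ completes the argument.

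The step I expect to carry the real content is the first one: recognizing that, among \eqref{hatM1}, \eqref{cK} and \eqref{hatM2}, it is the last that makes the argument uniform in $n$, and carrying out the $\operatorname{Tor}$ bookkeeping that realizes $\cK|_\fg$ as an elementary modification of $U_u\otimes\cO_\fg(-\fl)$. Everything afterwards is formal and rests on the single structural fact — visible from the determinantal resolution \eqref{resolution_IW} — that $\cI_{W_u/\PP^2}$ has no generators below degree $n-1$. The auxiliary points (local freeness of $\cK$, vanishing of the relevant $\operatorname{Tor}$-sheaf, and $\fe_u|_\fg=W_u$) are routine consequences of the genericity of $M_0$ fixed in Lemma \ref{lemma-buplane}.
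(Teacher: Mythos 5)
Your proof is correct, and it takes a leaner route than the paper's, though both hinge on the same key vanishing. The paper restricts \eqref{hatM1} to the rigid curve to split $\cK|_\fg \simeq \cK_0 \oplus \cO_\fg$, restricts \eqref{cK} to obtain the presentation \eqref{K0} of $\cK_0$, and dualizes into $\cN(-2\fl)$; this requires two vanishings, $\rH^0(\fg,\cN(-\fl))=0$ and $\rH^0(\fg,\cN(-2\fl))=0$ (the latter, needed for the trivial summand, is only asserted there and follows from $\cN \simeq \pi^*(\coker M_1)|_\fg$). You instead restrict \eqref{hatM2}, realize $\cK|_\fg$ as an extension of a torsion sheaf by $U_u\otimes\cO_\fg(-\fl)$, and kill the torsion quotient by mapping into a line bundle, so only $\rH^0(\fg,\cN(-\fl))=0$ remains; you then prove that vanishing via the identification $\cN(-\fl)\simeq \cI_{W_u/G_1}(n-2)$ (using $W_u\subset W\subset G_1$ and $\fe_u\cdot\fg$ reduced) together with \eqref{resolution_IW}, whereas the paper uses the pulled-back presentation of $\coker(M_1)$. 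So your argument bypasses both the splitting and the extra vanishing, at the cost of nothing new: the geometric input is the same. Two small phrasing points, neither affecting correctness: in the Tor step what you actually need (and get) is that the map from $\operatorname{Tor}_1^{\cO_{\hat S}}(\cO_{\fe_u}(-1),\cO_\fg)$ into the torsion-free sheaf $U_u\otimes\cO_\fg(-\fl)$ is zero -- in fact this Tor sheaf vanishes outright since $\fg\not\subset\fe_u$ -- rather than the Tor sheaf vanishing ``because it is torsion''; and the vanishing $\rH^0(\cI_{W_u/\PP^2}(t))=0$ for $t\le n-2$ is most cleanly justified by twisting \eqref{resolution_IW} by $\cO_{\PP^2}(t-n+1)$ and using $\rH^0(\cO_{\PP^2}(t-n+1))=\rH^1(\cO_{\PP^2}(t-n))=0$, which is what your ``generated in degree $n-1$'' remark amounts to.
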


\begin{proof}
  The divisor $e_u|_\fg$ has degree $n(n-1)-m=m$ and consists of $m$
  generic points of $\fg$, so 
  $\rh^0(\fg,\cN)=n-1$ and $\rh^1(\fg, \cN)=0$. 
  Note that the defining equation of the curve $G_0$ corresponds to
  the first element $v$ 
  of the chosen basis of the space of curves $V$ of degree
  $n-1$ through $W_u$. Hence, setting $V_v=V/\kk v$, we get an
  identification $\rH^0(\fg,\cN)=V_v$ and restricting \eqref{hatM1} we get:
  \[
    0 \to \cK_0 \to  V_v \otimes \cO_\fg \to \cN \to 0, \qquad
    \cK|_\fg \simeq \cK_0 \oplus \cO_\fg.
  \]
  Here, the sheaf $\cK_0$ is defined by the sequence and the copy of $\cO_\fg$ sits in $V \otimes \cO_\fg$ as the line
  spanned by $v$. Since $\rH^0(\fg,\cN(-2\fl))=0$, we have to show:
  \[
    \rH^0(\fg,\cK_0^* \otimes \cN(-2\fl))=0.    
  \]
  Restricting \eqref{cK} to $\fg$ and using $\cO_\fg(-\fh) \simeq \cO_\fg$ we get:
  \begin{equation}
    \label{K0}
    0 \to \cN((1-n)\fl) \to U_u \otimes \cO_\fg(-\fl) \to \cK_0 \to 0.    
  \end{equation}
  We may summarize this in the following long exact sequence:
  \[
    0 \to \cN((1-n)\fl) \to U_u \otimes \cO_\fg(-\fl) \to V_v \otimes \cO_\fg \to \cN \to 0.
  \]
  This gives a presentation:
  \[
    0 \to U_u \otimes \cO_{\hat S}(-\fl)\to V_v \otimes \cO_{\hat S} \to
     \cN \to 0.
   \]
  In particular $\rH^0(\fg,\cN(-\fl))=0$. We also note that $\cN
  \simeq \pi^*(\coker(M_1))$ and that the above sequence is the pull-back to
  $\hat S$ via $\pi$ of:
  \[
    0 \to (n-1). \cO_{\PP^2}(-1)\xrightarrow{M_1} (n-1).\cO_{\PP^2} \to
    \coker(M_1) \to 0.
   \]
  
  Applying $\cHom_\fg(-,\cN_\fg(-2\fl))$ to \eqref{K0} we get:
  \[
    0 \to \cK_0^* \otimes \cN(-2\fl) \to U_u^* \otimes \cN(-\fl) \to \cO_{\fg}((n-3)\fl)
    \to 0.
  \]
  Since $\rH^0(\fg,\cN(-\fl))=0$, we get $\rH^0(\fg,\cK_0^* \otimes
  \cN(-2\fl))=0$ and we are done.
\end{proof}

\subsubsection{Matrix factorization and the proof of Proposition \ref{propn2}}

Restricting $M_L$ to $S$, by
matrix factorization we
get:
\begin{equation}
  \label{longM}
0 \to \cL(-n) \to U \otimes \cO_S(-1) \to V \otimes \cO_S  \to \cL \to 0.
\end{equation}
Recall that $\deg(S)=n$ and use \cite[Lemma 3.2]{kleppe-miro-roig:dimension} to get:
\[
\Hom_S(\cL(-n),\cL(-1)) \simeq
  \rH^0(S,\cO_S(n-1)) \simeq
  \rH^0(\PP^3,\cO_{\PP^3}(n-1)).
\]
Therefore, applying $\Hom_S(-,\cL(-1))$
to the inclusion $\cL(-n) \to U \otimes \cO_S(-1)$
  appearing in \eqref{longM} and recalling that we set $V=\rH^0(S,\cL)$, we get a linear map:
\begin{equation}
  \label{n2}
  \Hom(U,V) = \AAA^* \to \rH^0(\PP^3,\cO_{\PP^3}(n-1)).  
\end{equation}

\begin{proof}[Proof of Proposition \ref{propn2}]
  First observe that, since the degree of $x_0$ in any of the minors
  defining $I_L$ is at most $1$, we have $I_L \subset x_0\fm_0^{n-2} + \fm_0^{n-1}$.
  Also, both $I_L$ and $x_0\fm_0^{n-2} + \fm_0^{n-1}$ are generated by $n^2$
  forms of degree $n-1$, those of $x_0\fm_0^{n-2} + \fm_0^{n-1}$ being
  linearly independent. Hence it suffices to prove that the $n^2$
  generators of $I_L$ are also linearly independent.

  The linear span of the $n^2$ minors under consideration is the image
   of the map \eqref{n2}, so we have to prove that this map is injective.

Set $\cL'=\hat \cL((n-2)\fh+(1-n)\fl)$. Using \eqref{linebundlesT}, we deduce from \eqref{detM}:
\[
  \rH^0(\cL'(\fh)) \simeq \rH^0(\cL'(\fl)) \simeq V.
\]
Therefore, we get an identification:
\[
  \Hom_{\hat S}(\cO_{\hat S}(-\fh) \oplus U_u \otimes \cO_{\hat
    S}(-\fl),\cL') \simeq U^*\otimes V = \AAA^*.
\]
Also, we have:
\[
  \Hom_{\hat S}(\hat \cL((1-n)\fl-\fh),\cL') \simeq \rH^0(\cO_{\hat S}((n-1)\fh)) \simeq \rH^0(\cO_{\PP^3}(n-1)).
\]
Now, applying $\Hom_{\hat S}(-,\cL')$ to
\eqref{cK} we get an exact sequence:
\[
0 \to \Hom_{\hat S}(\cK,\cL') \to  \AAA^* \to \rH^0(\PP^3,\cO_{\PP^3}(n-1)) \to
\Ext^1_{\hat S}(\cK,\cL').
\]
In view of Lemma \ref{hatcM}, the sequence \eqref{longM} is the image
via $\sigma_*$ of \eqref{lunga1}, in particular the middle map of the above
sequence is identified via $\sigma_*$ with the map \eqref{n2}.
Thus we are reduced to prove $\Hom_{\hat
  S}(\cK,\cL')=0$, that is:
\begin{equation}
  \label{KM'}
  \rH^0(\hat S, \cK^* \otimes \cL')=0.  
\end{equation}

To do this, we use the rigid curve $\fg \equiv \fh-\fl$. Note that:
\[
  \cL' \simeq \hat\cL((n-2)\fg-\fl).
\]
By
\eqref{g=-l}, for all integer $j$ we have:
\[
  0 \to \cO_{\hat S}((j-1)\fg - \fl) \to \cO_{\hat S}(j\fg - \fl) \to
  \cO_\fg(-(j+1)\fl) \to 0.
\]
Since $\fl|_\fg$ is effective, using induction on $j$ with $1 \le j
\le n-2$, to show \eqref{KM'}  it suffices to
prove:
\[
  \rH^0(\hat S, \cK^* \otimes \hat \cL(-\fl))=0, \qquad
  \rH^0(\fg, \cK^* \otimes \hat \cL(-2\fl)|_\fg)=0.  
\]
The second vanishing is precisely Lemma \ref{KN} so we only need to
show the first one. But this follows by looking at the dual of
\eqref{hatM1}, tensored with $\hat \cL(-\fl)$, which reads:
\[
  0 \to \cO_{\hat S}(-\fl) \to V^* \otimes \hat \cL(-\fl) \to \cK^*
  \otimes \hat \cL(-\fl) \to 0,
\]
so since $\rH^0(\hat S,\cL(-\fl))=\rH^1(\hat S,\cO_{\hat S}(-\fl))=0$, we get
$\rH^0(\hat S, \cK^* \otimes \hat \cL(-\fl))=0$. This completes the proof of
Proposition \ref{propn2}.
\end{proof}

\subsection{Quadratic Lefschetz property}\label{section:Lefschetz}

Consider a linear subspace $L \simeq \PP^3 \subset \PP^N$.
We get a projection $R\to R_L$ onto a polynomial ring $R_L$ in
$4$ variables, denoting as before $R_L\simeq \kk[x_0,\ldots,x_3]$.

Choose an integer $n \ge 3$. Since the singular
locus $\sing(D)$ has codimension $4$, generically
 $L$ will not meet
$\sing(D)$. In this case the image of $J_D$ in $R_L$ defines an Artinian Gorenstein algebra $A_L$ as quotient of
$R/(J_D+I_L)$. The algebra $A=A_L$, called the Artinian reduction of $R/J_D$,
inherits a minimal graded free resolution:
\begin{equation}
  \label{eq:artinian}
    0 \la A \la R_L \la n^2.R_L(1-n) \la 2(n^2-1).R_L(-n)
    \la n^2.R_L(-n-1) \la R_L(-2n) \la 0.
\end{equation}
  We say that the algebra $A$ has the degree-$k$ Lefschetz property
  if, for each graded piece $A_t$ of $A$, there is an element $h \in A_1$ such that $\cdot h^k : A_t \to A_{t+k}$ has maximal rank.
  
If $L \cap Z \ne \emptyset$, the algebra $A=A_L$ is no longer Artinian. In
the next paragraph we will see how, choosing $L$ in a
semigeneric way, the resulting non-Artinian algebra allows to
establish the quadratic Lefschetz 
property for the Artinian algebras $A'=A_{L'}$ given by the generic choice $L' \simeq \PP^3$.

\begin{Lemma} \label{Lemma i and ii}
  Assume that there is a linear subspace $L = \PP^3\subset \PP^N$ and a linear form $h$
  on $L$
  such that $\cdot h^2 : A_{n-3} \to A_{n-1}$ is an isomorphism. Then,
  for generic choice of $L' \simeq \PP^3\subset \PP^N$, the Artinian
  algebra $A'=A_{L'}$ has the quadratic Lefschetz property.
\end{Lemma}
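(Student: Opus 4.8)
The plan is to reduce the quadratic Lefschetz property of $A'=A_{L'}$ to the single critical multiplication $\cdot h^2\colon A'_{n-3}\to A'_{n-1}$, and then to transfer the hypothesized isomorphism for $L$ to a generic $L'$ by an openness argument over the Grassmannian $\GG$ of $3$-planes in $\PP^N$.

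First I would record the numerics. For generic $L'$ the algebra $A'$ is Artinian with the resolution \eqref{eq:artinian}, hence Gorenstein; by self-duality of that resolution its socle degree is $2n-4$, and \eqref{eq:artinian} gives $\dim_\kk A'_t=\binom{t+3}{3}$ for $0\le t\le n-2$, so the Hilbert function is symmetric and unimodal about $t=n-2$. In particular $A'_{n-3}$ and $A'_{n-1}$ both have dimension $\binom{n}{3}$, and $A'_t=(R_{L'})_t$ for all $t\le n-2$ since $J_D$ has no element of degree $<n-1$. This already settles every degree $t\ne n-3$: if $t\le n-4$ then $\cdot h^2\colon A'_t\to A'_{t+2}$ is multiplication by $h^2$ inside the polynomial ring $R_{L'}$, hence injective for $h\ne 0$, which is maximal rank because $\dim_\kk A'_t\le\dim_\kk A'_{t+2}$; if $t\ge n-2$ the Gorenstein pairing identifies $\cdot h^2\colon A'_t\to A'_{t+2}$ with the transpose of $\cdot h^2\colon A'_{2n-6-t}\to A'_{2n-4-t}$, and since $2n-4-t\le n-2$ the latter is again multiplication by $h^2$ in $R_{L'}$, so it is injective for $h\ne 0$ and hence the former is surjective, which is maximal rank because $\dim_\kk A'_t\ge\dim_\kk A'_{t+2}$ by unimodality.

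It remains to treat $t=n-3$, where maximal rank means isomorphism. Let $\cQ$ be the rank-$4$ tautological quotient bundle on $\GG$, with fiber $(R_{L'})_1$ over $L'$, so that $(R_{L'})_t$ is the fiber of $\mathrm{Sym}^t\cQ$; thus the spaces $A'_{n-3}=(R_{L'})_{n-3}$ form the bundle $\mathrm{Sym}^{n-3}\cQ$, of constant rank $\binom{n}{3}$. The $n^2$ order-$(n-1)$ minors of the tautological matrix, restricted to $L'$, span the image of a bundle map $n^2.\cO_\GG\to\mathrm{Sym}^{n-1}\cQ$ of rank at most $n^2$, with equality on an open subset $\cU_0\subset\GG$; since $\dim_\kk A'_{n-1}\ge\binom{n+2}{3}-n^2=\binom{n}{3}$ for every $L'$, the set $\cU_0$ is exactly the locus where this dimension is minimal, and over $\cU_0$ the pieces $A'_{n-1}$ form a vector bundle $\cF$ of rank $\binom{n}{3}$. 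Moreover the plane $L$ supplied by the hypothesis lies in $\cU_0$, since an isomorphism $A_{n-3}\xrightarrow{\sim}A_{n-1}$ forces $\dim_\kk A_{n-1}=\binom{n}{3}$. Now let $\mathbb{P}\to\cU_0$ be the $\PP^3$-bundle parametrizing pairs $(L',[h])$ with $h\in (R_{L'})_1\setminus\{0\}$; multiplication by the square of the tautological linear form is a morphism $\mu$ between two bundles of rank $\binom{n}{3}$ on $\mathbb{P}$ (the pullbacks of $\mathrm{Sym}^{n-3}\cQ$ and of $\cF$, up to a twist), so the set where $\mu$ is an isomorphism equals $\{\det\mu\ne 0\}$ and is open; it is nonempty because it contains the point $(L,[h])$ from the hypothesis. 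Since $\mathbb{P}$ is irreducible this set is dense, and since $\mathbb{P}\to\cU_0$ is a locally trivial fibration, hence open, its image is a dense open subset of $\cU_0$. As $\cU_0$ and the locus where $A_{L'}$ is Artinian with resolution \eqref{eq:artinian} are both dense open in $\GG$, intersecting yields a dense open set of planes $L'$ for which some $h'\in A'_1$ makes $\cdot (h')^2\colon A'_{n-3}\to A'_{n-1}$ an isomorphism. Together with the second paragraph, $A'$ has the quadratic Lefschetz property for every such $L'$, hence for generic $L'$.

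The step I expect to be the main obstacle is the third one: one must make sure that the graded pieces in degrees $n-3$ and $n-1$ genuinely assemble into vector bundles over a suitably chosen open locus of the Grassmannian, so that ``being an isomorphism'' becomes an open condition, and one must check that the plane $L$ given by the hypothesis already lies in that locus — which here is automatic, precisely because the hypothesized map is an isomorphism between spaces of the same dimension.
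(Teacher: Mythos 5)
Your proof is correct and follows essentially the same route as the paper: reduce to the middle map $\cdot h^2 : A'_{n-3} \to A'_{n-1}$ by using that $A'$ agrees with the polynomial ring in degrees $\le n-2$ (giving injectivity for $t \le n-4$) and Gorenstein duality with socle degree $2n-4$ (handling $t \ge n-2$), note the dimension count $\dim A'_{n-3}=\dim A'_{n-1}=\binom{n}{3}$, and then pass from the special pair $(L,h)$ to a generic $(L',h')$ by openness. Your Grassmannian bundle construction and the nonvanishing of $\det\mu$ simply make explicit the semicontinuity argument that the paper invokes in a single sentence, including the (correct) observation that the hypothesized $L$ automatically lies in the locus where the restricted minors stay linearly independent.
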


\begin{proof}
  Note that, since for a generic choice of
  $L'$ the Artinian algebra
  $A'$ has a graded resolution of the form \eqref{eq:artinian}, the graded
  algebra structure of $A'$ and $R_L$ coincide up to degree $n-1 \ge 1$.
  Therefore, $\cdot h^2 : A'_t \to
  A'_{t+2}$ has maximal 
  rank for any choice of $0 \ne h \in A'_1$ and all $t \in \{0,\ldots,n-4\}$.
  By Gorenstein duality, the same happens for $t \in
  \{n-2,\ldots,2n-6\}$, indeed $A'$ has socle degree $2n-4$.
  Therefore, $A'$ has the quadratic Lefschetz property 
  if there is $h \in A'_1$ such that $\cdot h^2 : A_{n-3} \to A_{n-1}$ has
  maximal rank. Because $\dim(A_{n-1})={n+2\choose 3}-n^2={n\choose
    3}=\dim(A_{n-3})$, this amounts to ask that $\cdot h^2 : A_{n-3}
  \to A_{n-1}$ is an isomorphism. Since, by our assumption, this holds for a special
  choice of the linear space
  $L$ and the element $h \in A_1$, it also holds for 
  a generic choice of the linear space $L'$ and the element $h \in A'_1$. Therefore $A'$ has the
  quadratic Lefschetz property, as required.
  
 \end{proof}
\subsection{Vector bundle of principal parts on a quadric surface}\label{section:stabilityprincipalbundle}

 In this section, we assume $\cha(\kk)=0$.
Consider the Segre product $X\simeq\PP^1 \times \PP^1 \subset \PP^3$ and,
for any $(a_1,a_2) \in \ZZ^2$, put
$\cO_X(a_1,a_1)=p_1^*\cO_{\PP^1}(a_1) \otimes p_2^*\cO_{\PP^1}(a_2)$,
where $p_1$ and $p_2$ are the two projections of $X$ onto its two
$\PP^1$ factors. Write $U_1=\rH^0(X,\cO_X(1,0))$ and
$U_2=\rH^0(X,\cO_X(0,1))$ and consider, for $n \in \NN$, the sheaf of principal parts
$\cE_n$ defined as kernel of the natural evaluation:
\[
  \cE_n = \ker\left(S^n U_1 \otimes S^n U_2 \otimes \cO_X \to \cO_X(n,n)\right).
\]

The goal of this subsection is to prove the following result.

\begin{Proposition} \label{En is stable}
  For any integer $n \ge 3$, the sheaf $\cE=\cE_n$ is slope-stable.
\end{Proposition}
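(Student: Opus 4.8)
The plan is to set up the relevant numerics, then produce a suitable quiver/Kronecker-type presentation of $\cE = \cE_n$ adapted to the planar graph of $\PP^1\times\PP^1$, and finally use Hoppe-type cohomological criteria to exclude destabilizing subsheaves. First I would record the invariants: on $X = \PP^1\times\PP^1$ one has $\rk(\cE) = (n+1)^2 - 1$, and from the defining sequence
\[
  0 \to \cE \to S^nU_1\otimes S^nU_2\otimes\cO_X \to \cO_X(n,n)\to 0
\]
one reads off $c_1(\cE) = (-n,-n)$, so with respect to the ample class $\cO_X(1,1)$ the slope is $\mu(\cE) = -2n/((n+1)^2-1)$. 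Dualizing and twisting, $\cE^\vee \simeq \cE(n,n)\otimes(\det)^{-1}$-type relations can be made explicit; the key cohomological input is that, for the anticanonical polarization, $\cE$ and all its small exterior powers have vanishing $\rH^0$ in the appropriate negative twist. Concretely, tensoring the defining sequence by $\cO_X(a,b)$ and taking cohomology reduces $\rH^0(\cE(a,b))$ to the kernel of the multiplication map $S^nU_1\otimes S^nU_2\otimes\rH^0(\cO_X(a,b)) \to \rH^0(\cO_X(n+a,n+b))$, which is injective as soon as $a<0$ or $b<0$, and has an easily-computed kernel when $a,b\ge 0$.

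Next I would translate slope-stability into a statement about representations of a quiver supported on the planar graph $\Gamma$ encoding the bigraded pieces $S^iU_1\otimes S^jU_2$ for $0\le i,j\le n$, with arrows given by multiplication by the four coordinate sections; this is the analogue for $\PP^1\times\PP^1$ of the tree used in \cite{re:principal} for $\PP^n$. The bundle $\cE$ corresponds to a specific representation (a syzygy-type object), and a destabilizing subsheaf $\cK\subset\cE$ would, after restricting to a general such $X$ and saturating, produce a subrepresentation whose dimension vector violates the slope inequality. The main work is a combinatorial estimate: one shows that any nonzero proper subrepresentation compatible with the multiplication maps must, because the planar graph is ``highly connected'' (every vertex $(i,j)$ with $i,j<n$ has two outgoing and two incoming arrows, and the source $(0,0)$ and sink $(n,n)$ are joined by many monotone lattice paths), have a dimension vector forcing slope strictly below $\mu(\cE)$. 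Equivalently, and more in the spirit of the rest of the paper, I would use Hoppe's criterion: $\cE$ is slope-stable iff $\rH^0((\wedge^p\cE)(q_1,q_2))=0$ for all $1\le p\le\rk(\cE)-1$ and all $(q_1,q_2)$ with $(q_1+q_2)/2 \le -\mu(\wedge^p\cE) = 2np/((n+1)^2-1)$, together with the analogous vanishing for $\cE^\vee$; here one exploits that $\wedge^p\cE$ embeds (modulo torsion) into $\wedge^{p}$ of the trivial bundle $S^nU_1\otimes S^nU_2\otimes\cO_X$ tensored with suitable twists of $\cO_X(-n,-n)$, reducing everything to explicit kernels of multiplication maps on $\PP^1\times\PP^1$.

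The hard part will be the bookkeeping for the exterior powers $\wedge^p\cE$: the ambient bundle is a big trivial bundle and the twist needed in Hoppe's criterion is small, so the relevant cohomology is concentrated at the ``boundary'' of the allowed range and one cannot afford crude bounds. I expect the cleanest route is to realize $\wedge^p\cE$ via the Koszul/Eagon–Northcott-type complex associated to the surjection $S^nU_1\otimes S^nU_2\otimes\cO_X \twoheadrightarrow \cO_X(n,n)$: this expresses $\wedge^p\cE$ as the $p$-th syzygy sheaf and gives $\wedge^p\cE \hookrightarrow \wedge^p(S^nU_1\otimes S^nU_2)\otimes\cO_X$ with cokernel a twist of $\wedge^{p-1}\cE\otimes\cO_X(n,n)$, after which an induction on $p$ combined with the $\SL_2\times\SL_2$-equivariant decomposition of $\wedge^p$ of the representation $S^nU_1\otimes S^nU_2$ into irreducibles controls which line bundles $\cO_X(a,b)$ can appear, hence which twists can carry sections. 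The characteristic-zero hypothesis enters exactly here, to guarantee complete reducibility of these $\SL_2\times\SL_2$-representations; in positive characteristic the decomposition of $\wedge^p(S^nU_1\otimes S^nU_2)$ is genuinely more delicate, which is why simplicity (rather than stability) is all that is claimed there. Once the vanishing $\rH^0((\wedge^p\cE)(q_1,q_2)) = 0$ is established in the required range, together with the same for the dual, Hoppe's criterion yields slope-stability of $\cE = \cE_n$.
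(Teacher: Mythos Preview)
Your plan takes a genuinely different route from the paper, and the main line you propose (Hoppe via exterior powers) has a gap that you have not closed.

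The paper does not use Hoppe's criterion at all. It works entirely through the Bondal--Kapranov / Ottaviani--Rubei equivalence between $G$-homogeneous bundles on $X=G/P$ and representations of the quiver $\cQ_X$ whose vertices are the line bundles $\cO_X(a,b)$ occurring in $\gr(\cE)$ and whose arrows come from $\Ext^1_X(\cO_X(\lambda),\cO_X(\mu))^G$. For $\cE=\cE_n$ one computes $\gr(\cE)=\bigoplus \cO_X(-n+2k,-n+2t)$ with $(k,t)\in\inter{0,n}^2\setminus\{(n,n)\}$, so every $V_\lambda$ is one-dimensional and every arrow is an isomorphism. King stability then reduces to a purely combinatorial inequality: a subrepresentation is determined by an ``upward-closed'' subset of the $n\times n$ grid (minus one corner), and the paper checks $\mu_\cE([\cE'])\ge 0$ by pairing each vertex on one side of the anti-diagonal with its reflection. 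The passage from King stability of $[\cE]$ back to slope-stability of $\cE$ uses the Ottaviani--Rubei argument together with $\rH^1(\cE(-n,-n))\simeq\kk$ to rule out a nontrivial tensor factor. This is why the proof is short: no exterior powers, no plethysm.

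Your quiver in the second paragraph (vertices $S^iU_1\otimes S^jU_2$, arrows given by multiplication by the four coordinate sections) is not the quiver the paper uses and is not obviously the one for which King stability is equivalent to slope-stability of the bundle; that equivalence is what makes the combinatorics decisive, and it comes from the homogeneous-bundle formalism, not from a Beilinson-type presentation.

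As for Hoppe: first, on a variety with $\rk\Pic>1$ the criterion is only sufficient, not ``iff'' as you wrote. Second, and more seriously, you would need $\rH^0(\wedge^p\cE(q_1,q_2))=0$ for every $1\le p\le n^2+2n-1$ and every $(q_1,q_2)\in\NN^2$ with $q_1+q_2\le \lfloor 2p/(n+2)\rfloor$. The short exact sequence $0\to\wedge^p\cE\to\wedge^p T\to\wedge^{p-1}\cE(n,n)\to 0$ you invoke gives $\rH^0(\wedge^p\cE(q_1,q_2))$ as the kernel of a map into $\rH^0(\wedge^{p-1}\cE(q_1+n,q_2+n))$, so the induction runs in the wrong direction: to control $\wedge^p\cE$ in small twists you need $\wedge^{p-1}\cE$ in large twists, where it certainly has sections, and you must identify the map, not just bound dimensions. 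Your appeal to the $\SL_2\times\SL_2$-decomposition of $\wedge^p(S^nU_1\otimes S^nU_2)$ does not help here, because that is a trivial bundle: its decomposition as a representation says nothing about which line bundles embed in $\wedge^p\cE$. Without a concrete mechanism to prove these vanishings for all $p$ up to $n^2+2n-1$, the argument is incomplete.
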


Set $G=\SL_2(\kk) \times \SL_2(\kk)$ and let $P$ be the subgroup of $G$
consisting of pairs of upper triangular matrices. Then $X \simeq
G/P$.
 We will use a special case of the equivalence of categories of linearized $G$-equivariant
vector bundles on $X=G/P$ and of finite-dimensional representations of $P$.
In turn, following \cite{bondal-kapranov:homogeneous, hille:homogeneous,
  ottaviani-rubei:quiver}, these categories are equivalent to the
category of
representations of the quiver with relations $\cQ_X$ that we describe below.
For the case under consideration of $X=\PP^1 \times \PP^1$, this takes place
in spite of the fact that the group $G$ is not strictu sensu of type
$A,D,E$.

Indeed,
consider the quiver $\cQ_X$, whose
vertices are defined by the irreducible representations of the
semisimple part of $P$, isomorphic to $\kk^* \times
\kk^*$. The weight function gives an identification of the vertice $\cQ_X$ and 
points of the lattice $\ZZ^2$, independently of $\cha(\kk)$.
In terms of sheaves over $X$, a vertex $\lambda=(a,b) \in
\ZZ^2$ of $\cQ_X$ is given by $\cO_X(\lambda)$.
The arrows of $\cQ_X$ are determined by the
invariant part of the extensions between representations.
Namely, there is an arrow from 
$\lambda =(a,b) \in \ZZ^2$ to $\mu = (c,d) \in \ZZ^2$ if
$\Ext^1_X(\cO_X(\lambda),\cO_X(\mu))^G \ne 0$, which happens if and
only if $a-c=d-b+2$.
 Note that in this case we have
$\Ext^1_X(\cO_X(\lambda),\cO_X(\mu))^G = \kk$.
The infinite quiver $\cQ_X$ has four connected components,
characterized by the fact of containing $\cO_X$, or $\cO_X(1,0)$, or $\cO_X(0,1)$, or $\cO_X(1,1)$.
There relations of the quiver $\cQ_X$ are given by imposing
commutativity of all the square diagrams of the following form:
\[
  \xymatrix@R-5ex{
    \small{(a,b)}  &\small{(a+2,b)} \\
    \bullet \ar[r] & \bullet  \\
    &&\\
    &&\\
    &&\\
    \bullet \ar[r] \ar[uuuu] & \bullet \ar[uuuu] \\
    \small{(a,b+2)} & \small{(a+2,b+2)}
  }
\]

Given a $G$-homogeneous bundle $E$, there exists a
 $G$-equivariant filtration:
$$
0 \subset E_1 \subset \cdots \subset E_k = E
$$
such that $E_i / E_{i-1}$ is a line bundle. The 
 \textit{associated
  graded bundle} is defined as:
\[
\gr(E)= \bigoplus_{i} E_i / E_{i-1},
\] 
and does not depend on the chosen filtration.
Write the graded bundle
as
\[
\gr(E) = \bigoplus_{\lambda \in \ZZ^2}V_{\lambda} . \cO_X(\lambda),
\]
where $V_\lambda$ is a $\kk$-vector space whose rank is the
number of copies of $\cO_X(\lambda)$ in $\gr(E)$.
The portion of
$\cQ_X$ whose vertices $\lambda$ satisfy $V_\lambda \ne 0$ is called
the \textit{support} of $E$ and denoted by $\supp(E)$.
The $G$-action on $E$ determines a linear map $V_{\lambda} \to V_\mu$
for all $\lambda,\mu$ in $\supp(E)$ satisfying
$\Ext^1_X(\cO_X(\lambda),\cO_X(\mu))^G = \kk$.

Given a homogeneous bundle $E$, we denote by $[E]$ the corresponding
representation and we talk indifferently of the support of $E$ or
of $[E]$.

\begin{Lemma}
We have that
$$
\gr\left( S^n U_1 \otimes S^n U_2 \otimes \cO_X\right) =
 \bigoplus_{t,k \in \inter{0,n}} \cO_X(-n+2k,-n+2t).
$$
\end{Lemma}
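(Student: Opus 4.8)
The plan is to compute $\gr$ of the trivial bundle $S^n U_1 \otimes S^n U_2 \otimes \cO_X$ through the dictionary between $G$-homogeneous bundles on $X = G/P$ and $P$-modules recalled above. The key input is the standard fact that if $E$ is a $G$-homogeneous bundle whose fibre at the base point $eP$ is the $P$-module $M$, then a composition series of $M$ as a $P$-module — which exists with $1$-dimensional quotients since $P$ is solvable and $\cha(\kk)=0$ — induces a filtration of $E$ as in the definition of $\gr$, whose graded pieces are the homogeneous line bundles attached to the $T$-characters of $M$; here $T = P/P_u$ is the Levi quotient, identified with the maximal torus of $P$, hence also of $G$. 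Consequently
\[
  \gr(E) = \bigoplus_{\lambda} \dim(M_\lambda)\cdot \cO_X(\lambda),
\]
the sum ranging over the $T$-weights $\lambda$ of $M$, with $M_\lambda$ the corresponding weight space and $\cO_X(\lambda)$ the line bundle on $X$ attached to $\lambda$ under the identification of characters with $\ZZ^2$ fixed above.

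To apply this, first I would note that $S^n U_1 \otimes S^n U_2 \otimes \cO_X$ is $G$-homogeneous with its tautological linearization, its fibre being $W := S^n U_1 \otimes S^n U_2$ with $P$ acting by restriction of the $G$-action (this makes sense because $U_1,U_2$ are $G$-modules and the underlying bundle is trivial). Then I would identify $U_i = \rH^0(X, \cO_X(\delta_i))$, with $\delta_1=(1,0)$ and $\delta_2=(0,1)$, with the standard $2$-dimensional representation of the $i$-th $\SL_2$-factor of $G$: for $\PP^1 = \SL_2/B$ the space of linear forms is the standard representation, and on $X = \PP^1 \times \PP^1$ only the $i$-th factor of $G$ acts non-trivially on $U_i$. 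In the weight conventions fixed above, the $T$-weights occurring in $U_i$ are $\pm 1$ on the $i$-th coordinate and $0$ on the other.

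The remaining step is the weight count. Taking a weight basis (monomials of degree $n$), $S^n$ of the standard $\SL_2$-representation decomposes as $\bigoplus_{j=0}^{n} \kk_{-n+2j}$ with every weight space $1$-dimensional, so
\[
  W = \bigoplus_{k,\,t \in \inter{0,n}} \kk_{(-n+2k,\,-n+2t)},
\]
again with $1$-dimensional weight spaces. Feeding this into the displayed formula for $\gr$ and using that the character $(a,b)$ corresponds to $\cO_X(a,b)$ gives the asserted identity. A pleasant feature is that the weight set $\{-n,-n+2,\ldots,n\}$ of $S^n$ is symmetric about $0$, so the resulting direct sum is unchanged had one normalized $U_i$ to have highest weight $-\delta_i$ instead of $\delta_i$; thus the sign conventions are immaterial for the final answer.

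The only genuinely delicate point — and the one I expect to spend care on — is the bookkeeping in the first paragraph: making the correspondence $\lambda \mapsto \cO_X(\lambda)$ precise and compatible with the identification of the vertices of $\cQ_X$ with $\ZZ^2$ set up earlier, i.e.\ checking that a $T$-character goes to $\cO_X(\lambda)$ rather than its inverse and with the right normalization. Everything else is a one-line computation with weights of symmetric powers of the standard $\SL_2$-module, and by the symmetry remark above this bookkeeping does not, in the end, affect the stated formula.
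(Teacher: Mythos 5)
Your proof is correct, and it reaches the statement by a slightly different route than the paper. You work entirely on the fibre: under the dictionary between $G$-homogeneous bundles on $X=G/P$ and $P$-modules, you semisimplify the $P$-module $S^nU_1\otimes S^nU_2$ via a composition series with one-dimensional quotients and read off $\gr$ from the $T$-weights, which for $S^n$ of the standard $\SL_2$-module are $-n+2j$, $j\in\inter{0,n}$, each with multiplicity one. The paper instead argues at the level of bundles: it starts from the equivariant Euler-type sequence $0\to\cO_X(-1,0)\to U_1\otimes\cO_X\to\cO_X(1,0)\to 0$, runs an induction through $0\to S^{n-1}U_1\otimes\cO_X(-1,0)\to S^nU_1\otimes\cO_X\to\cO_X(n,0)\to 0$ to get $\gr(S^nU_i\otimes\cO_X)$, and then invokes $\gr(E\otimes F)=\gr(E)\otimes\gr(F)$ for $G$-homogeneous bundles. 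The two arguments rest on the same weight data, so they buy essentially the same thing; yours is the more conceptual one-shot computation (and your observation that the symmetry of the weight set $\{-n,-n+2,\dots,n\}$ makes the sign normalization in the correspondence $\lambda\mapsto\cO_X(\lambda)$ irrelevant is a legitimate way to dispose of the only bookkeeping risk), while the paper's induction avoids appealing explicitly to composition series of $P$-modules, using only exact sequences of bundles plus multiplicativity of $\gr$. One small remark: you do not really need Lie--Kolchin (and hence algebraic closedness) to produce the filtration, since the Borel-stable flag of the standard representation induces the required $P$-filtration on symmetric powers and their tensor product directly.
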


\begin{proof}
First start by computing $\gr(S^n U_1 \otimes \cO_X)$ by induction,
observing that, for $n=1$, we have an $\SL_2(\kk)$-equivariant exact sequence:
\begin{equation}
  \label{P1} 
0 \rightarrow \cO_X(-1,0) \rightarrow U_1 \otimes \cO_X \rightarrow \cO_X(1,0) \rightarrow 0. 
\end{equation}
This gives rise, for any $n$, to:
$$
0 \rightarrow S^{n-1}U_1 \otimes\cO_X(-1,0) \rightarrow S^n U_1 \otimes \cO_X \rightarrow \cO_X(n,0) \rightarrow 0
$$
We get the following:
$$
\gr(S^n U_1 \otimes \cO_X) = \bigoplus_{k \in \inter{0,n}} \cO_X(-n+2k,0).
$$
Analogously, we have that
$$
\gr(S^n U_2 \otimes \cO_X) = \bigoplus_{t \in \inter{0,n}} \cO_X(0,-n+2t)
$$

The proof is achieved observing that, for any pair of $G$-homogeneous bundles $E$ and $F$, we have
 $\gr(E\otimes F) = \gr(E) \otimes \gr(F)$. 
\end{proof}

The previous lemma accounts for the vertices in $\supp(\cE)$,
which are:
\[
  (n,n) - \{2(k,t) \mid (0,0) \ne (k,t) \in \inter{0,n} \times \inter{0,n} \}.
\]

Let us look at the arrows of $[\cE]$. We start by observing that
the linear map in $\cQ_X$ arising from \eqref{P1} is
non-zero. More generally, the support
of $S^n U_1 \otimes \cO_X$ is:
$$
\xymatrix@-3ex{
  n & n-2 & n-4 & \cdots & -n+2 & -n \\
 \bullet \ar[r]  & \bullet \ar[r]  & \bullet  &  \cdots & \bullet \ar[r] & \bullet 
}
$$
The arrows correspond to elements of $\Ext^1_X(\cO_X(a+2,0),\cO_X(a,0))$.
Note that all maps in $\cQ_X$ associated with $[S^n U_1 \otimes \cO_X]$ are non-zero.
We get the following picture for $\supp(\cE)$, where all the
associated linear maps are non-zero.
\begin{equation}
  \label{quiver-n}
\xymatrix@-3ex{
&  n & n-2 & n-4 & \cdots & -n+2 & -n \\
-n & \bullet \ar[r] & \bullet \ar[r] & \bullet  & \cdots & \bullet \ar[r] & \bullet \\
-n +2 & \bullet \ar[r] \ar[u] & \bullet \ar[r] \ar[u] & \bullet \ar[u] &  \cdots & \bullet \ar[r] \ar[u] & \bullet \ar[u]\\
\vdots & \vdots & \vdots & \vdots & & \vdots & \vdots\\
n-4 & \bullet \ar[r] & \bullet \ar[r] & \bullet  & \cdots & \bullet \ar[r] & \bullet \\
n-2 & \bullet \ar[r] \ar[u] & \bullet \ar[r] \ar[u] & \bullet \ar[u] &  \cdots & \bullet \ar[r] \ar[u] & \bullet \ar[u]\\
n &  & \bullet \ar[r] \ar[u] & \bullet \ar[u] &  \cdots & \bullet \ar[r] \ar[u] & \bullet \ar[u]
}
\end{equation}

Here, the side labels denote the degrees of the summand $\cO_X(a,b)$
in the associated graded bundle. Moreover, the horizontal (resp. vertical) arrows are determined by
$\Ext^1(\cO_X(a+2,b),\cO_X(a,b))$
(resp. $\Ext^1(\cO_X(a,b+2),\cO_X(a,b))$). 
We will call \textit{main diagonal} of the support the set of
vertices of the form $(a,-a)$.

Consider a subrepresentation $[\cE']$ of $[\cE]$.
Note that all arrows of $[\cE]$ are isomorphisms and every vertex
in $\lambda \in \supp(\cE)$ is connected to another vertex to the right
of $\lambda$ or above $\lambda$ until reaching $(-n,-n)$.
Then the main observation
is that, if a
vertex $\lambda_1=(a_1,b_1)$ 
is in the support of $[\cE']$, then every vertex of $\supp(\cE)$ to right of
$\lambda_1$ or above $\lambda_1$
is also in the support of $[\cE']$, that is 
 $(a_2,b_2) \in \supp(\cE')$ if 
$-n\leq a_2 \leq a_1$ and $-n\leq b_2 \leq b_1$.
Therefore, $[\cE']$ is completely described by
means of its \textit{boundary vertices}, namely the vertices $(a,b)$
in $\supp(\cE')$, such that neither $(a+2,b)$ nor $(a,b+2)$ is in
$\supp(\cE')$.

\begin{Example}
Let us consider $n=7$ and a homogeneous bundle $\cE'$ whose
representation has the following support.
$$
\xymatrix@-3ex{
& 7 & 5 & 3 & 1 & -1 & -3 & -5 & -7\\
-7 & \bullet \ar[r] & \bullet \ar[r] & \bullet \ar[r] & \bullet \ar[r] & \bullet \ar[r] & \bullet \ar[r] & \bullet \ar[r] & \bullet \\
-5 & &&  & \bullet \ar[r] \ar[u] & \bullet \ar[r] \ar[u] & \bullet \ar[r] \ar[u] & \bullet \ar[r] \ar[u] & \bullet  \ar[u]\\
-3 & &&  & \bullet \ar[r] \ar[u] & \bullet \ar[r] \ar[u] & \bullet \ar[r] \ar[u] & \bullet \ar[r] \ar[u] & \bullet  \ar[u]\\
-1 & &&  &   & \bullet \ar[r] \ar[u] & \bullet \ar[r] \ar[u] & \bullet \ar[r] \ar[u] & \bullet \ar[u]\\
1 & &  & &  & & \bullet \ar[r] \ar[u] & \bullet \ar[r] \ar[u] & \bullet  \ar[u]\\
3 &  && &  & & \bullet \ar[r] \ar[u] & \bullet \ar[r] \ar[u] & \bullet  \ar[u] 
}
$$
If $[\cE']$ is a subrepresentation of $[\cE]$, then all arrows must be
non-zero.
The boundary vertices are given by the four vertices of the quiver indexed by $(7,-7), (1,-3), (-1,-1)$ and $(-3,3)$.
\end{Example}

Let us introduce stability with respect to the line bundle
$\cO_X(1,1)$ in terms of representations of $\cQ_X$ according to \cite{king:moduli}.
For $\lambda=(a,b) \in \ZZ^2$, put $c_1([V_\lambda \otimes
\cO_X(\lambda)])=\rk(V_\lambda)(a+b)$ and $\rk([V_\lambda \otimes
\cO_X(\lambda)])=\rk(V_\lambda)$. For a $G$-homogeneous bundle $E$
on $X$, define $c_1([E])=c_1([\gr(E)])$ and $\rk(E)=\rk([\gr(E)])$ by linearity.
For every $G$-homogeneous bundle $E'$ we put:
$$
\mu_E([E']) = c_1([E])\rk([E']) - \rk([E])c_1([E']).
$$
The representation $[E]$ is called \textit{stable} if for all
subrepresentations $[E']$ we have that $\mu_E([E']) \geq 0$ and
the equality holds if only if $[E']$ is either $[E]$ or $[0]$.

For the $G$-homogeneous bundle $\cE$, the stability of the representation $[\cE]$ is
equivalent to the slope-stability of $\cE$ itself. Indeed,
the proof of \cite[Theorem 7.2]{ottaviani-rubei:quiver} can be adapted
to the case of $X=\PP^1 \times \PP^1$
to show that the representation 
$[\cE]$ is slope-stable if and only if $\cE \simeq W \otimes
\cE'$, where $W$ is an irreducible $G$-module and $\cE'$
is a slope-stable $G$-homogeneous bundle on $X$.
Since $\rH^1(\cE(-n,-n)) \simeq \kk$, we must then have $W \simeq
\kk$ and $\cE \simeq \cE'$.

To conclude that $\cE$ is slope-stable, we need only show that $[\cE]$
is stable, which we do in the next result.

\begin{Lemma}
For any subrepresentation $[\cE']$, we have that $\mu_{\cE}([\cE'])
\geq 0$. Moreover, $\mu_{\cE}([\cE']) = 0$ if only if either $[\cE'] = [\cE]$ or $[\cE']=[0]$.
\end{Lemma}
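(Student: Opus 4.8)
The plan is to push everything through the combinatorics of the support quiver of $[\cE]$ and reduce to a single extremal inequality for order ideals in the grid $\{0,\dots,n\}^2$.

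First I would compute the invariants. All multiplicities in $\gr(\cE)$ are $1$, so $\rk([\cE])=|\supp(\cE)|=(n+1)^2-1$, while $c_1([\cE])=\sum_{(a,b)\in\supp(\cE)}(a+b)$ equals $0$ for the full grid $\{-n,-n+2,\dots,n\}^2$ minus the contribution $2n$ of the missing vertex $(n,n)$, hence $c_1([\cE])=-2n$. A subrepresentation $[\cE']$ has support an order ideal $\Sigma$ of $\supp(\cE)$ (it is closed under the arrows, all of which are isomorphisms), so $\rk([\cE'])=|\Sigma|$ and $c_1([\cE'])=\sum_{(a,b)\in\Sigma}(a+b)$, and the assertion $\mu_{\cE}([\cE'])\ge 0$ reads $\sum_{(a,b)\in\Sigma}(a+b)\le -\tfrac{2}{n+2}|\Sigma|$. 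Re-coordinatising $\supp(\cE)$ as $P\setminus\{(n,n)\}$ with $P=\{0,\dots,n\}^2$ via $a+b\leftrightarrow 2((i+j)-n)$, and observing that the order ideals of $\supp(\cE)$ are precisely the order ideals $\Sigma\subsetneq P$, this is equivalent to
\[
  \sum_{(i,j)\in\Sigma} v(i,j)\le 0,\qquad v(i,j):=(n+2)(i+j)-(n^2+2n-1),
\]
for every order ideal $\Sigma\subsetneq P$. Since $\sum_P v=(n+1)^2=v(n,n)$, we have $\sum_\Sigma v=0$ when $\Sigma=\emptyset$ (i.e. $[\cE']=[0]$) and when $\Sigma=P\setminus\{(n,n)\}$ (i.e. $[\cE']=[\cE]$), and the goal is that these are the only cases of equality.

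To prove the displayed inequality I would encode $\Sigma$ by its column-height vector $\psi=(\psi(0)\ge\cdots\ge\psi(n))$ with $\psi(i)\in\{0,\dots,n+1\}$, so that $\Sigma\ne P$ means $\psi(n)\le n$, and write
\[
  \sum_\Sigma v=\sum_{i=0}^{n}\phi(i,\psi(i)),\qquad \phi(i,p):=\sum_{j=0}^{p-1}v(i,j)=(n+2)\binom{p}{2}+p\bigl((n+2)i-(n^2+2n-1)\bigr).
\]
For fixed $i$, $p\mapsto\phi(i,p)$ is a convex quadratic with $\phi(i,0)=0$, so $F(\psi):=\sum_i\phi(i,\psi(i))$ extends to a convex function on the polytope $\cP=\{\,0\le t_n\le\cdots\le t_0\le n+1\,\}\cap\{t_n\le n\}$, whose vertices are integral and correspond to honest order ideals; hence $\max_{\cP}F$ is attained at a vertex of $\cP$, and it suffices to bound $F$ there. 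The vertices come in two families indexed by $k=-1,\dots,n-1$: the ``full-column'' vertices $V_k$ (first $k+1$ columns full, the rest empty) and the vertices $W_k$ (first $k+1$ columns full, the rest of height $n$); note $V_{-1}$ is the empty ideal and $W_{n-1}$ is $P\setminus\{(n,n)\}$. Passing along each family changes a single column height by one step, so $F(V_k)-F(V_{k-1})=\phi(k,n+1)$ and $F(W_k)-F(W_{k-1})=v(k,n)=(n+2)k+1$; together with $F(V_{-1})=F(W_{n-1})=0$ and $F(V_{n-1})=F(W_{-1})=-\tfrac{n^2(n+1)}{2}$, a short computation gives $F\le 0$ at every vertex, with equality exactly at $V_{-1}$ and $W_{n-1}$.

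This already yields $\sum_\Sigma v\le 0$. For the equality statement I would use that, $F$ being convex, its set of maximisers in $\cP$ is convex and, by the vertex analysis, meets the vertex set only in $V_{-1}=(0,\dots,0)$ and $W_{n-1}=(n+1,\dots,n+1,n)$; hence it lies in the segment joining them, and $\gcd(n,n+1)=1$ forces the only integral points on that segment to be its endpoints. Thus $\sum_\Sigma v=0$ only for $\Sigma\in\{\emptyset,\,P\setminus\{(n,n)\}\}$, which translates back to $[\cE']\in\{[0],[\cE]\}$. I expect the extremal inequality to be the only real obstacle, the rest being bookkeeping; inside it the crucial move is recognising $\sum_\Sigma v$ as a convex function of the column-height vector, so that the optimisation collapses to finitely many explicit vertices. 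A purely elementary bound such as $\phi(i,\psi(i))\le\tfrac{\psi(i)}{n+1}\phi(i,n+1)$ is tight only on full or empty columns and loses a term of order $|\Sigma|$, so it is not sharp enough on its own.
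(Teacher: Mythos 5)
Your reduction is genuinely different from the paper's argument: there, semistability is obtained by pairing each vertex $\lambda$ of $\supp(\cE')$ with its reflection $\tau(\lambda)$ across the main diagonal, so that paired contributions cancel, and the leftover terms are bounded by a crude count of the support; you instead encode the support of a subrepresentation as a proper order ideal $\Sigma$ of the grid and maximize the separable quadratic $F$ over the truncated order polytope $\cP$. That part of your proposal is correct: subrepresentation supports are indeed arrow-closed, hence proper order ideals; the translation of $\mu_{\cE}([\cE'])\ge 0$ into $\sum_{\Sigma}v\le 0$ is right (and your invariants $\rk([\cE])=(n+1)^2-1$, $c_1([\cE])=-2n$ are the correct ones for $\cE_n$ as defined); the vertex list $V_k,W_k$ of $\cP$ and the values and increments of $F$ there check out, giving $F\le 0$ at every vertex with equality exactly at $V_{-1}$ and $W_{n-1}$. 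Since a convex function on a compact polytope attains its maximum at a vertex, this yields the inequality $\mu_{\cE}([\cE'])\ge 0$ for every subrepresentation.

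The equality case, however, rests on a false principle: the set of maximisers of a convex function on a convex set is in general \emph{not} convex (that is true of the minimiser set; think of $x\mapsto x^2$ on $[-1,1]$), and even granting convexity, a convex subset meeting the vertex set only in $V_{-1}$ and $W_{n-1}$ need not lie in the segment joining them, so the step ``hence it lies in the segment'' does not follow as written. Fortunately your own setup repairs this in one line: each $\phi(i,\cdot)$ is a quadratic in $p$ with positive leading coefficient, hence strictly convex, so $F$ is strictly convex as a function of the height vector; a strictly convex function attains its maximum on a compact convex set only at extreme points, so any order ideal with $\sum_\Sigma v=0$ must already be a vertex of $\cP$, and your vertex analysis forces it to be $V_{-1}$ or $W_{n-1}$ --- the gcd/segment argument becomes unnecessary. (Alternatively, one may invoke the correct general fact that the maximiser set of a convex function on a polytope is a union of faces; its faces have all their vertices among $V_{-1},W_{n-1}$, and since the segment joining these two points is not a face, one concludes as you intended.) With that substitution your proof is complete and independent of the paper's reflection argument.
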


\begin{proof}
  Let $[\cE']$ be a non-zero subrepresentation of $[\cE]$.
  We have:
  \[
    \mu_{\cE}([\cE']) = \sum_{(a,b) \in \supp(\cE')} \left(c_1([\cE])
      - \rk([\cE])(a+b) \right)
    = \sum_{(a,b) \in \supp(\cE')} \left(-2n
      +(1-n^2)(a+b) \right).
  \]

  For a point $\lambda=(a,b) \in \ZZ^2$, write
  $\tau(\lambda)=(-b,-a)$, so that $\tau$ is the reflection along the
  main diagonal in \eqref{quiver-n}.
  Any vertex $\lambda=(a,b) \in \supp(\cE')$ satisfies $a+b = 2t$ for
  some $t \in \inter{-n,n}$. Write $\supp_t(\cE') = \{(a,b) \in
    \supp(\cE') \mid a+b=2t\}$. We get:
    \begin{equation}
      \label{halfmu}
    \frac 12 \mu_{\cE}([\cE']) =
    -n\left|\supp(\cE')\right| + \sum_{t \in \inter{-n,n}}\sum_{\lambda \in
      \supp_t(\cE')} (1-n^2)t.
    \end{equation}

  Now recall that, for any $\lambda \in \supp(\cE')$, the
  vertices of $\supp(\cE)$ to right of $\lambda$ or above $\lambda$
  are also in $\supp(\cE')$. Therefore, for any 
  $\lambda \in \supp_t(\cE')$ with $t \ge 0$, the vertex
  $\tau(\lambda)$ also lies in $\supp(\cE')$, more precisely
  $\tau(\lambda) \in \supp_{-t}(\cE')$. Note that the two terms
  $(1-n^2)t$ in the summation \eqref{halfmu} arising from a pair
  $(\lambda,\tau(\lambda)) \in \supp_t(\cE') \times \supp_{-t}(\cE')$
  add up to zero so we may
  restrict the summation to the vertices $\lambda \in \supp(\cE')$
  such that $\tau(\lambda)$ does not lie in $\supp(\cE')$. In turn
  this can happen only if $\lambda \in \supp_{-t}(\cE')$ with $t \ge 1$. Set
  $\cV_t(\cE')$ for the set of vertices $\lambda \in \supp_{-t}(\cE')$
  with $\tau(\lambda) \not \in \supp(\cE')$.
  Hence we rewrite \eqref{halfmu} as:
  \[
    \frac 12 \mu_{\cE}([\cE']) =
    -n\left|\supp(\cE')\right| -
    \sum_{t \in \inter{1,n}}\sum_{\lambda \in \cV_t(\cE')} (1-n^2)t.
  \]
  We have $\left|\supp(\cE')\right| \le n^2-1$ so:
  \begin{align*}
    \frac 12 \mu_{\cE}([\cE'])  &\ge -n(n^2-1)-
    \sum_{t \in \inter{1,n}}\sum_{\lambda \in \cV_t(\cE')}
    (1-n^2)t = \\
    & = (n^2-1)\sum_{t \in \inter{1,n}}\left(-1+\sum_{\lambda \in \cV_t(\cE')} t\right).
  \end{align*}

  Note that, since $[\cE']$ is non-zero, we must have $(-n,-n) \in \cV_n(\cE')$, hence:
  \begin{align*}
    \frac 12 \mu_{\cE}([\cE']) &\ge (n^2-1)\left(n-1+\sum_{t \in
    \inter{1,n-1}}\left(-1+\sum_{\lambda \in \cV_t(\cE')}
    t\right)\right) \ge \\
    &\ge (n^2-1)\left(n-1+(1-n)\right)=0,
  \end{align*}
where the last inequality simply follows from the fact that $t\geq 0$.

  We have thus proved that $[\cE]$ is semistable. Moreover, if
  equality is attained in the above displays, then we must have 
    $\left|\supp(\cE')\right| = n^2-1$ which implies that $[\cE']$ is
    equal to $[\cE]$.
\end{proof}

\subsection{Proof of Theorem
  \ref{main:determinant}}\label{section:provethm5} 

 This section is again in arbitrary characteristic.
All the ingredients to prove Theorem \ref{main:determinant} are ready.
According to Proposition \ref{En is stable}, the sheaf $\cE_{n-1}$ is
slope-stable in characteristic zero.
So it suffices to see that $\cT_D$ restricts over $X$ to
$\cE_{n-1}$, for a generic quadric hypersurface, isomorphic to $\PP^1
\times \PP^1$, of a linear 3-dimensional subspace of $\PP^N$.
We show next that this is indeed the case (in arbitrary characteristic).

By Lemma \ref{Lemma
i and ii} we only need to show that there is a linear space $L \simeq \PP^3
\subset \PP^N$ and a linear form $h$ over $L$ such that, in the
resulting algebra $A=A_L=R_L/I_L$, the
multiplication $\cdot h^2 : A_{n-3} \to A_{n-1}$ is an
isomorphism.

Choosing $L$ to be semigeneric in the sense of \S
\ref{subsection:semigeneric} and $h=x_0$, by Proposition \ref{propn2}
we get that $\cdot x_0^2 : A_{n-3} \to A_{n-1}$ is injective since
there is no polynomial involving $x_0^2$ in the graded piece of
degree $n-1$ of $I_L$. Moreover, we observed 
that this graded piece has dimension
$n^2$ so again $\dim(A_{n-3})=\dim(A_{n-1})={n \choose 3}$ and 
therefore $\cdot x_0^2 : A_{n-3}
\to A_{n-1}$ is an isomorphism.

Since $\cdot h^2 : A_{n-3} \to A_{n-1}$ is an
  isomorphism for a
  given choice of a linear form $h$ in $R_L$, then, by Lemma \ref{Lemma i and ii}, we get an isomorphism $\cdot g : A_{n-3}
  \to A_{n-1}$ also for a generic choice of a
  quadric form  $g$ in $R_L$.

  Next, considering $\cT_D|_L$ we get the fundamental relation:
  \[
    \rH^1(\cT_D|_L(t)) \simeq A_{t+n-1}, \qquad \mbox{for all $t \in \ZZ$},
  \]
  and these isomorphisms are compatible with the $R$-module structure.
  Then, we compute the cohomology of the restriction of $\cT_D$ to the
  quadric surface $X$ defined in $L$ by the form $g$, for $t \le 0$ by the diagram:
  \[
    \xymatrix@-2ex{
      0 \ar[r] & \rH^0(\cT_D|_X(t))   \ar[r] &
      \rH^1(\cT_D|_L(t-2)) \ar@{=}[d]  \ar^-{\cdot g}[r] &
      \rH^1(\cT_D|_L(t)) \ar@{=}[d] \ar[r] & \rH^1(\cT_D|_X(t))   \ar[r] & 0\\
&&       A_{t+n-3} \ar^-{\cdot g}[r] &        A_{t+n-1}, & 
    }
  \]
  For $t=0$ we get, by our assumption, $\rH^0(\cT_D|_X)=\rH^1(\cT_D|_X)=0$.
  It follows as in the proof of Theorem
  \ref{main:symmetric_determinant}, see the paragraph following 
  \eqref{TDP}, that $\cT_D|_X$ is isomorphic to $\cE_{n-1}$.
\medskip
  
This concludes the proof of Theorem
\ref{main:determinant}.

\section{Families of determinants}\label{section:familiesdeterminants}

Let $n \ge 3$ be an integer.
In view of Theorem \ref{main:determinant}, we know that the
logarithmic sheaf $\cT_D$ associated to the tautological determinant
$D=D_\ri$ of the $n$-matrix of 
variables is a slope-stable reflexive sheaf on $\PP^N$.
Denote by $\fM_n$ the moduli space of simple sheaves on $\PP^N$
containing $\cT_D$ or, in characteristic zero, the moduli space of
stable sheaves on $\PP^N$
containing $\cT_D$.
Our goal is to describe a dense open piece of this
moduli space as a certain group quotient.

\subsection{Moduli space and group quotient}

In view of the setup of \S \ref{intrinsic}, for
any $\rf \in \End_\kk(\AAA)$ we may consider $\det(M_\rf)$ as en
element of $S^n \AAA$. We get a rational map:
\[
  \rdet : \PP(\End_\kk(\AAA)) \dashrightarrow \PP(S^n \AAA),
\]
defined at the points where $\det(M_\rf) \ne 0$. The image of
$\rdet$ is the set of determinantal hypersurfaces of degree $n$. We denote
it by $\fD_n$.
Recall that $G=\SL(U) \times \SL(V)$ acts on $\PP(\End_\kk(\AAA))$ by
left and right composition. In terms of the matrices $M_\rf$, for
$(\bg,\bh) \in G$, the actions is
$M_{(\bg,\bh).\rf}=(\bg,\bh).M_\rf=\bh M_\rf \bg^{-1}$.
The determinant is fixed by this action, so we have a map:
\[
  \udet : \PP(\End_\kk(\AAA))/G \dashrightarrow \PP(S^n \AAA),
\]
whose image is again $\fD_n$. Recall that we put $D_\rf=\VV(\det(M_\rf))$.

\begin{Lemma} \label{stableiff}
   The following are equivalent:
  \begin{enumerate}[label=\roman*)]
  \item \label{Li} $\rf \in \GL(\AAA)$,
  \item \label{Lii} $\cT_{D_\rf}$ has no trivial direct summand,
  \item  \label{Liii} $\cT_{D_\rf}$ is simple,
  \end{enumerate}
  and, if $\cha(\kk)=0$ these conditions are equivalent to:
  \begin{enumerate}[label=\roman*), resume]
\item  \label{Liv} $\cT_{D_\rf}$ is semistable.
  \item  \label{Lv} $\cT_{D_\rf}$ is slope-stable.
  \end{enumerate}
\end{Lemma}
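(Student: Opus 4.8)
The plan is to establish the cycle $(i)\Rightarrow(iii)\Rightarrow(ii)\Rightarrow(i)$, together with $(i)\Rightarrow(v)\Rightarrow(iv)\Rightarrow(ii)$ when $\cha(\kk)=0$, bootstrapping from Lemma \ref{simple lemma} and Theorem \ref{main:determinant} for the ``positive'' direction and from Remark \ref{rmk-cone} for the converse. Throughout, the standing hypothesis is $\det(M_\rf)\neq 0$, which is what makes $D_\rf$ and $\cT_{D_\rf}$ meaningful; note that it holds automatically once $\rf\in\GL(\AAA)$.

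For $(i)\Rightarrow(iii)$ and $(i)\Rightarrow(v)$, the key point is that $\GL(\AAA)$ acts on $\End_\kk(\AAA)=\AAA^*\otimes\AAA$ through the second tensor factor, i.e.\ by post-composition, and that on matrices this amounts to acting on the entries of $M_\rf$ — which live in $\rH^0(\cO_{\PP(\AAA)}(1))=\AAA$ — by a linear change of coordinates on $\PP(\AAA)$. Thus, for $\rf\in\GL(\AAA)$, applying $\rf^{-1}$ carries $M_\rf$ to $M_\ri$ up to such a change of coordinates, so $D_\rf$ is projectively equivalent to the tautological determinant $D=D_\ri$. A projective automorphism $\psi$ of $\PP^N$ taking $D$ to $D_\rf$ preserves $\cO_{\PP^N}(1)$ and induces an isomorphism $\psi^{*}\cT_{D_\rf}\simeq\cT_D$; hence $\cT_{D_\rf}$ is simple by Lemma \ref{simple lemma}, and slope-stable when $\cha(\kk)=0$ by the last item of Theorem \ref{main:determinant}.

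The remaining links are immediate. We have $(iii)\Rightarrow(ii)$ and $(iv)\Rightarrow(ii)$ by Remark \ref{rmk-cone}: a trivial direct summand would make $\cT_{D_\rf}$ neither simple nor slope-semistable, and a Gieseker-semistable sheaf is slope-semistable; and $(v)\Rightarrow(iv)$ because slope-stability implies Gieseker stability, hence semistability. For $(ii)\Rightarrow(i)$ we argue by contraposition: assume $\rf\notin\GL(\AAA)$ and write $\rf=\sum_\alpha\phi_\alpha\otimes a_\alpha\in\AAA^*\otimes\AAA$ in a representation of minimal length, so that $\{a_\alpha\}$ is a basis of $\im(\rf)$. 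Unravelling the canonical identifications $\AAA=V^*\otimes U$, $\Hom_\kk(U,V)=\AAA^*$ and $\End_\kk(\AAA)=\AAA^*\otimes\AAA$ through which $M_\rf$ is attached to $\rf$, one checks that the $n^2$ entries of $M_\rf$ span precisely $\im(\rf)$ inside $\AAA=\rH^0(\cO_{\PP(\AAA)}(1))$, a subspace of dimension $\rk(\rf)<n^2=N+1$. Choosing coordinates on $\PP^N$ adapted to this subspace, the equation $\det(M_\rf)$ of $D_\rf$ involves at most $\rk(\rf)$ of the $N+1$ variables, so at least one of its partial derivatives vanishes identically; by Remark \ref{rmk-cone}, $\cT_{D_\rf}$ then has a nonzero trivial direct summand, and $(ii)$ fails. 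Putting the implications together gives $(i)\Leftrightarrow(ii)\Leftrightarrow(iii)$ in general and the equivalence of all five conditions when $\cha(\kk)=0$.

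The one step that carries content beyond what is already in hand is the identification ``the span of the entries of $M_\rf$ equals $\im(\rf)$'' used in $(ii)\Rightarrow(i)$: this is a routine but slightly delicate computation with the canonical isomorphisms above, after which the cone structure of $D_\rf$ — and hence the failure of simplicity and of (semi)stability — is forced by Remark \ref{rmk-cone}. I would expect this bookkeeping, rather than any cohomological input, to be the only place where care is needed.
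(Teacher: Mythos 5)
Your proposal is correct and follows essentially the same route as the paper: $(i)\Rightarrow(iii)$ (and $(i)\Rightarrow(v)$ in characteristic zero) by reducing $M_\rf$ to the matrix of indeterminates via a change of coordinates and invoking Lemma \ref{simple lemma} and Theorem \ref{main:determinant}, the easy implications via Remark \ref{rmk-cone}, and $(ii)\Rightarrow(i)$ by contraposition, observing that for $\rf\notin\GL(\AAA)$ the entries of $M_\rf$ span a proper subspace of $\AAA$ so some variable is missing and a partial derivative of $\det(M_\rf)$ vanishes identically. Your explicit identification of the span of the entries with $\im(\rf)$ just spells out what the paper phrases as ``the entries of $M_\rf$ do not form a basis of $\AAA$''; no gap.
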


\begin{proof}
  In arbitrary characteristic, we have \ref{Li} $\Rightarrow$
  \ref{Liii} by Lemma \ref{simple lemma}.
  Also, \ref{Liii} implies \ref{Lii}.

  Let us check that \ref{Lii} implies \ref{Li}.
  If $\rf \in \End_\kk(\AAA) \setminus \GL(\AAA)$, then up
  to choosing a suitable basis of $\AAA$, the entries of
    the associated matrix $M_\rf$ will not form a basis of $\AAA$.
    In
    other words there is a choice of coordinates or $R$ such that not all the
    variables $(x_{i,j})_{(i,j) \in  \inter{1,n}^2}$
    occur in $M_\rf$, which is to say that $M_\rf$ is
  constant in some variable of $R$. Therefore, the equation of
  $D_\rf=\VV(\det(M_\rf))$ does not depend of this variable. 
  Hence, the partial derivative of $\det(M_\rf)$ with respect to that
  variable is zero, which in turn implies that the sheaf $\cT_{D_\rf}$
  has a trivial direct summand. We have proved \ref{Lii} $\Rightarrow$ \ref{Li}.

\medskip
In characteristic zero,
  the implication \ref{Li} $\Rightarrow$ \ref{Lv} is essentially
  Theorem \ref{main:determinant}. 
  Indeed, if $\rf \in \GL(\AAA)$ then the entries of $M_\rf$ form a basis of $\AAA$.
  Hence we can consider an appropriate change of coordinates to transform
  $M_\rf$ into the matrix of indeterminates
  $\MM_\ri=(x_{i,j})_{(i,j) \in \inter{1,n}^2}$, being $\ri$ the identity in $\GL(\AAA)$.
  This manipulation has no consequence on the stability of the
  associated sheaf and we know that $\cT_{D_\ri}$ is slope-stable, so the
  sheaf $\cT_{D_\rf}$ is slope-stable.
  Finally, $\ref{Lv}$ implies \ref{Lii}, \ref{Liii} and \ref{Liv}
  (recall Remark \ref{rmk-cone}), 
  while \ref{Lii} still implies \ref{Li}.
\end{proof}

Having this in mind, we note that, since $M_\rf$ is
canonically associated to $\rf$ and the formation of
$\cT_{D_\rf}$ is functorial,
the sheaves
$(\cT_{D_\rf} \mid [\rf] \in \PGL(\AAA))$ glue to a coherent sheaf over
$\PGL(\AAA)$ and thus yield a moduli map $\PGL(\AAA) \to \fM_n$.
This descends to a moduli map up to the action of the closed subgroup
$G=\SL(U) \times \SL(V) \subset \PGL(\AAA)$ and therefore $\Psi$
factors through the map
$\udet$. We write $\fD_n^\circ$ for the set of tautological determinantal
hypersurfaces up to a change of basis, that is, the image of
$\udet$ restricted to $\PGL(\AAA)$.
We obtain an induced map $\Phi : \fD_n^\circ  \to \fM_n$ fitting in
the following commutative diagram.
\[
\xymatrix{
\PGL(\AAA)/G  \ar[rd]_{\udet} \ar[rr]^\Psi & & \fM_n\\
& \fD_n^\circ \ar[ur]_\Phi
}
\]

\subsection{The DK-Torelli property of the determinant} \label{sec-DKTorelli-det}

We first analyze the map $\Phi$ of the above diagram via a 
Torelli-type result.
Note that Proposition \ref{prop:torelli} 
fails for $D=D_\ri$.
Indeed, $\rH^1_*(\cT_D)=0$ so of course we cannot find the elements $h_1,h_2$
required to apply Proposition \ref{prop:torelli}. Moreover,
$D$ has singularities of multiplicity $n-1$, for example the
point $(1:0:\ldots:0)$. Therefore, we couldn't use anyway
\cite{wang:jacobian} to recover $D$ from the Jacobian ideal of $D$.
In spite of this, the following result shows that $D$ enjoys the
DK-Torelli property.

\begin{Proposition} \label{Torelli-det}
  The map $\Phi$ is injective.
\end{Proposition}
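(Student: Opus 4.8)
The plan is to recover the determinantal hypersurface $D_\rf$, up to a change of basis of $\AAA$, from the isomorphism class of its logarithmic tangent sheaf $\cT_{D_\rf}$, thereby showing that $\Phi$ (equivalently $\udet$ restricted to $\PGL(\AAA)$, modulo $\Psi$) is injective on $\fD_n^\circ$. First I would reduce to the model point: by Lemma \ref{stableiff}, every point of $\fD_n^\circ$ comes from some $\rf \in \GL(\AAA)$, and after a change of coordinates we may assume $\rf = \ri$, so that $\cT_{D_\rf} \simeq \cT_D$ where $D$ is the tautological determinant. Hence it suffices to show that if $D'$ is any determinantal hypersurface of degree $n$ with $\cT_{D'} \simeq \cT_D$, then $D' = D$ after a linear change of coordinates. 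The key is to extract the map $M_\ri : U \otimes \cO_{\PP(\AAA)}(-1) \to V \otimes \cO_{\PP(\AAA)}$, i.e.\ the matrix of indeterminates, intrinsically from $\cT_D$.

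The main tool is the sequence \eqref{syz}, namely $0 \to \cT_D \to \AAA \otimes \cO_{\PP(\AAA)} \to \cI_Z(n-1) \to 0$ with $Z = \sing(D)$, together with the Gulliksen-Neg\r ard resolution of Proposition \ref{res-determinant}. From $\cT_D$ alone one recovers: the space $\AAA = \rH^0(\cO_{\PP(\AAA)}(1))$ (so $\PP^N$ is reconstructed, with its linear structure); the ideal sheaf $\cI_Z(n-1) = \coker(\cT_D \to \AAA \otimes \cO)$ as the image of the evaluation map on $\rH^0$, hence the singular scheme $Z$; and, from the resolution \eqref{varphi}, the map $\varphi : \AAA \otimes R(-n-1) \to (\ssl(U)\oplus\ssl(V))\otimes R(-n)$, whose cokernel sits over $T_D$. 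The point is that this last map encodes, via the two Lie-algebra summands $\ssl(U)$ and $\ssl(V)$, the two-sided multiplication structure that defines the determinantal representation; the strategy is to show that $\ssl(U)$ and $\ssl(V)$ (hence $U$ and $V$ up to scalars, i.e.\ the factorization $\AAA = V^* \otimes U$) are canonically attached to $\cT_D$, for instance by identifying them as the two isotypical blocks in $\End(\cT_D(n)^{**})$ or through the $G$-equivariant structure of the minimal free resolution. Equivalently, and perhaps more directly, I would recover $\cI_Z$ and note that $Z$ is the locus where $M_\ri$ drops rank; the variety of rank-$(\le n-2)$ loci and the reflexive rank-one sheaf $\cL = \coker(M_\ri|_D)$ together with its transpose pin down $M_\ri$ up to the $\SL(U)\times\SL(V)$-action and transposition — but transposition lands on the same divisor $D$, so $D$ itself is determined.

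The hardest step will be the intrinsic recovery of the factorization $\AAA = V^* \otimes U$, i.e.\ of the pair $(U,V)$ up to transposition, purely from the abstract sheaf $\cT_D$ without reference to a chosen matrix presentation. Concretely, one must argue that the minimal graded free resolution of $T_D = \rH^0_*(\cT_D)$ is forced to have the $G$-equivariant shape \eqref{varphi}, and that the summands $\ssl(U)$, $\ssl(V)$ appearing in the first syzygy module are intrinsic — not merely one possible decomposition of an abstract $2(n^2-1)$-dimensional space. This should follow because the second syzygy is $\AAA \otimes R(-n-1)$ with $\AAA = R_1$ known, and the differential $\varphi$ is then determined up to isomorphism of the source and target; the decomposition of the target into two $\ssl$-blocks is read off from the structure of $\coker(\varphi) = T_D$, say by looking at how $R_1 = \AAA$ acts. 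Once $M_\ri$ is reconstructed up to $\SL(U) \times \SL(V)$ and transposition, we get $\det(M_\ri)$, hence $D$, determined up to the linear change of coordinates absorbed into $\PGL(\AAA)$; this is exactly the assertion that $\Phi$ is injective.
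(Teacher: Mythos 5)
There is a genuine gap, precisely at the step you yourself flag as the hardest one. Your plan hinges on recovering, from the abstract sheaf $\cT_D$ alone, the tensor factorization $\AAA \simeq V^*\otimes U$ (equivalently the splitting of the first syzygy module into $\ssl(U)\oplus\ssl(V)$, or the matrix $M_\ri$ up to the $\SL(U)\times\SL(V)$-action and transposition), but this is only asserted (``this should follow because\dots''), not proved. The minimal graded free resolution of $T_D$ is indeed determined by $\cT_D$ up to isomorphism, but what it hands you is an abstract free module of rank $2(n^2-1)$ in the first syzygy spot; the decomposition into the two $\ssl$-blocks is exactly the $G$-equivariant structure you are trying to reconstruct, so invoking it is circular unless you exhibit a canonical mechanism (and none is given). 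Your alternative suggestion via $\cL=\coker(M_\ri|_D)$ has the same problem: $\cL$ is defined from a chosen determinantal representation, not from $\cT_D$. Similarly, the claim that $\cI_Z(n-1)$ is the cokernel of ``the evaluation map on $\rH^0$'' needs care, since $\rH^0(\cT_D)=0$; the canonical identification goes through $\Hom(\cT_D(n-1),\cO_{\PP(\AAA)})\simeq\AAA^*$, which is what the paper sets up (and uses later, for the map $\rsyz$), but even granting the recovery of $Z=\sing(D)$ you never supply the step producing $D$ itself from the recovered data. There is also a logical slip in your conclusion: showing that $D$ is determined ``up to the linear change of coordinates absorbed into $\PGL(\AAA)$'' is vacuous here, because all points of $\fD_n^\circ$ lie in a single $\PGL(\AAA)$-orbit; injectivity of $\Phi$ requires that isomorphic sheaves give the \emph{same} divisor, which is why one needs a reconstruction of $D$ from $\cT_D$ that is canonical (functorial), so that it transports correctly under the initial normalization $\rf=\ri$.

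The paper's proof avoids the equivariant-recovery problem altogether. Using the explicit Gulliksen--Neg\aa rd presentation, the map $\varphi$ is described fibrewise at a point $\ra$ by $\psi_\ra\colon \rb\mapsto(\ra\rb,\rb\ra)$; diagonalizing $\ra$ one computes $\rk(\cT_D|_\ra)=n^2+k^2-2$ when $\rk(\ra)=n-k$. Hence the Segre locus of rank-one matrices is recognized intrinsically as the support of a Fitting ideal of $\cT_D$ (the locus where the fibre rank equals $2n^2-2n-1$), and $D$ is then recovered canonically as the variety of $(n-1)$-secant $(n-2)$-planes to that locus. Since Fitting supports and secant varieties are functorial in the sheaf, isomorphic logarithmic sheaves yield equal hypersurfaces, which is the statement of injectivity. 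If you want to salvage your route, you would need either a proof that the $\ssl(U)\oplus\ssl(V)$ splitting is canonically attached to $\cT_D$, or a direct argument that the singular scheme $Z$ (which \emph{is} canonically recoverable) determines $D$ — for instance by iterating singular loci down to the rank-one Segre and then taking the secant variety, which essentially reproduces the paper's endgame.
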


\begin{proof}
  Consider the tautological determinant $D=D_\ri=\det(\MM_\ri)$.
  We give a closer look to the Gulliksen-Neg\r ard complex
considered in the proof of Proposition \ref{res-determinant}. Fixing a
basis of $U$ and $V^*$ we identify $\AAA=\Hom_\kk(U,V)$ with the vector space
$\rM_n(\kk)$ of square matrices of size $n$.
Following \cite{bruns-vetter:LNM}, we write an explicit description
of the presentation matrix of $\cT_D$, that is, of the map $\varphi$
appearing in \eqref{varphi}.
To do this, we consider a non-zero matrix $\ra \in \rM_n(\kk)$
and we describe $\varphi$ fibre-wisely over $\ra$.
Consider the complex:
$$
\kk \stackrel{\iota}{\hookrightarrow} \rM_n(\kk) \oplus \rM_n(\kk) \stackrel{\pi}{\longrightarrow} \kk
$$
with $\iota(\lambda) = (\lambda \III_n,\lambda \III_n)$ for $\lambda
\in \kk$ and $\III_n \in \rM_n(\kk)$ the
identity matrix and $\pi(\ra,\rb)= \trace(\ra-\rb)$.
The homology of this complex is a vector space of dimension $2(n^2-1)$.
The map $\varphi$ is induced at the point corresponding to a matrix
$\ra$ by:
$$
\begin{array}{rccc}
\psi_\ra: & \rM_n(\kk) & \longrightarrow & \rM_n(\kk) \oplus \rM_n(\kk)\\
& \rb & \mapsto &(\ra \rb,\rb \ra)
\end{array}
$$

Up to the choice of a new basis of $U$ and $V^*$, we may suppose
that $\ra$ is diagonal.
On the other hand, the rank of $\varphi$, and hence of $\cT_D$ at a diagonal matrix
$\ra$ can be read off from the expression of 
$\psi_\ra$. Indeed, if $\ra$ is invertible then $\ker(\psi_\ra)$ is spanned
by $\ra^{-1}$, while for $\ra$ of rank $n-k$, with $k \in \inter{1,n-1}$,
writing 
$\ra=\diag(\lambda_1,\ldots,\lambda_{n-k},0,\ldots,0)$ with $\lambda_i
\ne 0$ for all $i \in \inter{1,n-k}$ we see that $\ker(\psi_\ra)$
consists of matrices $\rb=(b_{i,j})$ with $b_{i,j}=0$ for $i \le n-k$ or
$j \le n-k$. Summing up, for $\ra$ of rank $n-k$, with $k \in
\inter{1,n-1}$, we have
\[
  \rk(\cT_D|_\ra) = n^2+k^2-2.
\]

This gives:
$$
\left\{\ra \in \PP^N \mid \rk(\ra) =1 \right\} = \left\{\ra \in \PP^N \mid
  \rk(\cT_D|_\ra) =2n^2-2n-1 \right\}. 
$$
In other words, the locus of rank-1 matrices is the support of the
Fitting ideal of $\cT_D$ defined by the minors of order $2n$ of
$\varphi$. 

Now, 
the hypersurface $D$ is determined as the variety of $(n-1)$-secant
subspaces of dimension $n-2$ to the
locus of matrices of rank $1$. This says in particular that $\cT_D$
determines $D$ as the $(n-1)$-secant variety to the locus where
$\cT_D$ has rank $2n^2-2n-1$.

After an appropriate change of coordinates, as mentioned before, we
get that for every hypersurface  $D_\rf \in \fD_n^\circ$, the
associated reflexive sheaf $\cT_{D_\rf}$ determines $D_\rf$.
\end{proof}

\subsection{The determinant as a $2:1$ cover}

Here we show that the fibre of the map
$$\udet : \PGL(\AAA)/\SL(U)\times \SL(V) \to
\fD_n^\circ$$ consists of $2$ distinct points.
 This result goes back to Frobenius,
  \cite[\S 7.1]{frobenius:gruppen}.
It has been extended and recasted in various ways, let us mention
\cite[\S 4]{waterhouse:automorphisms-basic}, \cite[\S
8]{bermudez-garibaldi-larsen}, see also
\cite{dieudonne:orthogonal-4,marcus-moyls:linear}
We provide a proof for self-containedness and to point out a slightly
different approach based on divisor class groups involving Ulrich
sheaves, close to the methods of \cite{waterhouse:automorphisms-basic,reichstein-vistoli:determinantal}.

\begin{Proposition} \label{2:1}
  The morphism $\udet$ is set-theoretically $2:1$.  
\end{Proposition}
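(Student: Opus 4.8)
The statement asserts that for a determinantal hypersurface $D \in \fD_n^\circ$, the fibre of $\udet$ over $[D]$ consists of exactly two points, i.e. there are precisely two classes $[\rf] \in \PGL(\AAA)/\SL(U)\times\SL(V)$ (equivalently, two $\SL(U)\times\SL(V)$-orbits of linear determinantal representations $M_\rf$ of $D$) and transposition ${}^\tra M_\rf$ accounts for the second one. The plan is to reconstruct a determinantal representation of $D$ intrinsically from the geometry of the hypersurface, using the cokernel sheaf as the central object, and to show that the ambiguity in this reconstruction is exactly a $\ZZ/2$ coming from transposition (equivalently, from dualizing the cokernel sheaf).

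First I would recall the matrix-factorization/cokernel dictionary: for $\rf \in \GL(\AAA)$ the map $M_\rf : U \otimes \cO_{\PP(\AAA)}(-1) \to V \otimes \cO_{\PP(\AAA)}$ has cokernel $\cL_\rf$, a rank-$1$ reflexive sheaf on $D$, and transposing ${}^\tra M_\rf$ yields the ``residual'' sheaf $\cL_\rf^\vee(n-1)$ restricted to $D$ (Grothendieck duality on $D$, as in the proof of Lemma \ref{hatcM}). The two-sided action of $\SL(U)\times\SL(V)$ does not change the isomorphism class of $\cL_\rf$, and conversely a sheaf isomorphism $\cL_\rf \simeq \cL_{\rf'}$ together with fixed identifications $U \simeq \rH^0(\text{residual data})$, $V \simeq \rH^0(\cL_\rf)$ forces $M_\rf$ and $M_{\rf'}$ to differ by left-right multiplication, i.e. $[\rf]$ and $[\rf']$ lie in the same orbit. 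So the fibre of $\udet$ over $[D]$ is in bijection with the set of isomorphism classes of rank-$1$ reflexive sheaves on $D$ arising as cokernels of a linear $n \times n$ matrix representing $F$, and the claim becomes: this set has exactly two elements, namely $\{[\cL], [\cL^\vee(n-1)]\}$.

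Next I would pin down this set using the divisor class group of $D$. The determinantal hypersurface $D$ is normal (its singular locus $\sing(D)$, cut by the order-$(n-1)$ minors, has codimension $4$ in $\PP^N$, hence codimension $3$ in $D$, so $D$ is regular in codimension $2$ and Cohen--Macaulay, thus normal and even factorial away from $\sing(D)$ up to codimension $2$; but in fact the relevant statement is $\Cl(D) \simeq \ZZ$, generated by the class of $\cL$, with $\cL^{\otimes n}$ reflexively equal to $\cO_D(n-1)$ — this is classical, cf. the Ulrich-sheaf viewpoint in \cite{waterhouse:automorphisms-basic, reichstein-vistoli:determinantal}). The rank-$1$ reflexive sheaves that are cokernels of a \emph{linear} matrix factorization of $F$ are exactly the Ulrich-type sheaves, and the Ulrich condition on a rank-$1$ reflexive sheaf $\cM$ on $D$ (namely that it admits a linear resolution $U\otimes\cO(-1) \to V\otimes\cO$ of the right ranks) singles out the classes $m \in \ZZ \simeq \Cl(D)$ with $\cM \simeq \cL^{\otimes m}$ reflexive and the Hilbert polynomial matching; a direct cohomology computation (using that $\rH^0(\cL^{\otimes m}(t))$ and $\rH^0((\cL^\vee(n-1))^{\otimes m'}(t))$ must have the dimensions forced by a linear presentation by $U$ and $V$) shows the only two admissible classes are $m = 1$, giving $\cL$, and $m = -1$ (twisted up to $\cO_D(n-1)$), giving $\cL^\vee(n-1)$ — which is precisely the cokernel of ${}^\tra M_\rf$. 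Finally $\cL \not\simeq \cL^\vee(n-1)$ because, e.g., their classes in $\Cl(D) \simeq \ZZ$ are $1$ and $n-1$, distinct for $n \ge 3$ (and for $n=2$ the statement is vacuous or trivial by Remark \ref{rmk-case2}). This gives exactly $2$ points in the fibre.

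The main obstacle I anticipate is the rigorous computation of $\Cl(D)$ for the generic determinant together with the identification of which reflexive rank-$1$ sheaves are ``linearly presented'' — i.e. showing that a cokernel of a linear $n\times n$ matrix factorization of $F$ must be $\cL^{\pm1}$ and nothing else. This requires either invoking the known structure of the divisor class group of generic determinantal hypersurfaces and the theory of Ulrich sheaves, or carrying out a self-contained cohomological argument: from a hypothetical factorization $0 \to \cM(-n) \to U'\otimes\cO_D(-1) \to V'\otimes\cO_D \to \cM \to 0$ with $\dim U' = \dim V' = n$, deduce via Serre duality and the Cohen--Macaulayness of $D$ that $\cM$ is Ulrich, hence its class is extremal in $\Cl(D)$, hence $\pm$ the generator. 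I would present this via the divisor-class-group route flagged in the text (``a slightly different approach based on divisor class groups involving Ulrich sheaves''), citing \cite{bruns-vetter:LNM} for the structure of the class group and the minors, and reduce the transposition symmetry to the statement that dualizing the cokernel sends the generator of $\Cl(D)$ to its negative (twisted into the effective range), which is the content of Grothendieck duality applied to the matrix factorization.
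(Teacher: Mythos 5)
Your strategy is the same one the paper follows: identify the fibre of $\udet$ over $D$ with isomorphism classes of rank-one reflexive (Ulrich) cokernel sheaves of linear determinantal representations, count these via the divisor class group, and let transposition/duality account for the two points. But two concrete issues remain. First, the class-group facts you assert are wrong as stated: for the projective hypersurface one has $\Cl(D)\simeq \ZZ\fh\oplus\ZZ\fl^+$, free of rank two (the paper obtains this from the small resolution $\sigma^+:D^+=\PP(U\otimes\cQ_+)\to D$, whose exceptional locus has codimension $2$); it is only after quotienting by the hyperplane class that one gets $\ZZ$. The relation ``$\cL^{\otimes n}\cong\cO_D(n-1)$'' is false in either group; the correct relation is $\fl^+ +{}^\tra\fl^+=(n-1)\fh$, i.e.\ the reflexive hull of $\cL\otimes{}^\tra\cL$ is $\cO_D(n-1)$. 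Your justification that $\cL\not\simeq\cL^*(n-1)$ (``classes $1$ and $n-1$'') rests on the false relation; the conclusion survives with the corrected class group, since $\fl^+\ne(n-1)\fh-\fl^+$, but the argument as written does not.

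Second, and more seriously, the decisive counting step --- that the cokernel of any linear $n\times n$ representation of $F$ is isomorphic to $\cL$ or ${}^\tra\cL$ and to nothing else --- is precisely what you defer to ``a direct cohomology computation'' or a citation, and it is where essentially all of the paper's work lies. The paper proves it by computing, on $D^+$: the Picard group, $\rH^0(\cO_{D^+}({}^\tra\fl^+))\simeq U^*$ via the Koszul complex of $D^+\subset\PP(V)\times\PP(\AAA)$, the effective cone (spanned by $\fl^+$ and ${}^\tra\fl^+$), and finally the fact that a nonzero section of the cokernel sheaf vanishes on a Weil divisor of degree ${n\choose 2}$; the degree equation together with effectivity then leaves only the classes $\fl^+$ and ${}^\tra\fl^+$. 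An appeal to ``extremality in $\Cl(D)$'' or to \cite{bruns-vetter:LNM} alone does not discharge this: one needs both the effective cone and the degree (Hilbert-polynomial) constraint, and one must also exclude classes proportional to $\fh$, using that the cokernel sheaf is not invertible along $\sing(D)$. As it stands, your text is a correct plan along the paper's own lines, but with the crucial counting argument missing and one erroneous supporting claim.
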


\begin{proof}
 By the argument of Proposition
  \ref{Torelli-det}, it is enough to prove that the set-theoretic
  fibre of $\udet$ at $D=D_\ri=\det(M_\ri)$ consists of two distinct points.
  To do this, we look more closely at the geometry of a resolution of
  singularities $\sigma^+ : D^+ \to D$ and argue that, up to the $G$-action, the
  elements $\rf \in \PGL(\AAA)$ such that $D_\rf=D$ are in bijection
  with effective divisor classes $\fl$ on $D^+_\rf$ such that:
  \[
    \fl \cdot \fh^{n^2-3}={n \choose 2},
  \]
  where $\fh$ is the pull-back to $D^+$ of the hyperplane class of
  $D \subset \PP^N$. We then show that there are precisely two such
  divisor classes.

  To define $D^+$, we consider $\PP(V)$ and the tautological quotient
  bundle $\cQ_+$ of rank $(n-1)$ on $\PP(V)$, defined by the Euler sequence:
  \[
    0 \to \cO_{\PP(V)}(-1) \to V^* \otimes \cO_{\PP(V)} \to \cQ_+ \to 0.
  \]

  Put $D^+=\PP(U \otimes \cQ_+)$. Note that $\rH^0(U \otimes \cQ_+) \simeq
  \AAA$.
  Geometrically, we have:
  \[
    D^+=\{([v],[\ra]) \in \PP(V) \times \PP(\AAA) \mid v \circ \ra =0 \}.
  \]
  The linear system associated with
  the tautological relatively ample divisor $\fh$ defines a birational
  morphism $\sigma^+ : D^+ \to D$. 
  Denote by $\fl^+$ the pull-back to $D^+$ of a hyperplane of
  $\PP(V)$ via the bundle map $\pi^+ : D^+ \to \PP(V)$.
   The map $\sigma^+$ is an isomorphism away from the singular locus $\sing(D)$
   of $D$ which consists of the matrices $\ra : U \to V$ of rank at
   most $n-2$. This locus has codimension $3$ in $D$.
   The maps $\pi^+$ and $\sigma^+$ are the restrictions to $D^+$
   of the projections from $\PP(V) \times \PP(\AAA)$ onto the first and
   second factor. Note that the generic fibre of $\sigma^+$ over $\sing(D)$
   is a projective line, so the exceptional locus $\fe^+$ of
   $\sigma^+$ has codimension $2$ in $D^+$. Therefore $\sigma^+$
   induces an isomorphism:
   \[
     \Cl(D) \simeq \Pic(D^+) \simeq \ZZ \fh \oplus \ZZ \fl^+.
   \]
  
  Note that $D^+$ is cut in $\PP(V) \times \PP(\AAA)$ by a linear
  section, whose Koszul complex reads:
  \[
    0 \to \cO_{\PP(V) \times \PP(\AAA)}(-n,-n) \to
    \cdots \to U \otimes \cO_{\PP(V) \times \PP(\AAA)}(-1,-1) \to
    \cO_{\PP(V) \times \PP(\AAA)} \to \cO_{D^+} \to 0.
  \]

  Set ${}^\tra \fl^+ = (n-1)\fh - \fl^+$.
  From the above complex we compute:
  \[
    \rH^0(\cO_{D^+}({}^\tra \fl^+)) \simeq U^*.
  \]
  To see this, for $i \in \inter{1,n}$, set $\cK_j$ for the image of
  the $j$-th differential of the Koszul complex, taking the form:
  \[
    \bigwedge^j U \otimes \cO_{\PP(V) \times \PP(\AAA)}(-j,-j) \to
    \bigwedge^{j-1}U \otimes \cO_{\PP(V) \times \PP(\AAA)}(1-j,1-j).
  \]
  For all $j \in \inter{1,n}$, the Künneth formula gives
  $\rH^p(\cK_j(-1,n-1))=0$ for $p\in \NN \setminus \{j\}$. We obtain:
  \[
    \rH^0(\cO_{D^+}({}^\tra \fl^+)) \simeq  \rH^1(\cK_1(-1,n-1))
    \simeq \cdots \simeq \rH^{n-1}(\cK_{n-1}(-1,n-1))
    \simeq \bigwedge^{n-1} U \simeq U^*.
  \]

  The linear system $|{}^\tra \fl^+|$ gives a rational
  map $D^+ \dashrightarrow \PP(U^*)$.
  Resolving the indeterminacies of this map we get a
  variety $\hat D$ and a morphism $\hat D \to
  \PP(U^*)$. Geometrically:
  \[
    \hat D=\{([v],[\ra],[u]) \in \PP(V) \times \PP(\AAA) \times
    \PP(U^*) \mid v \circ \ra =0=\ra \circ u \}.
  \]

  Starting from $\PP(U^*)$ and the quotient bundle $\cQ_-$ over
  $\PP(U^*)$ we get second a desingularization $D^-=\PP(V^* \otimes
  \cQ_-)$ with a birational map $\sigma^- : D^- \to D$. This is
  described as:
  \[
    D^-=\{([\ra],[u]) \in \PP(\AAA) \times \PP(U^*)\mid \ra  \circ u =0 \}.
  \]
  The manifold $\hat D$ is the blow-up of $D$ along $\sing(D)$ and the map
  $\hat D \to D$ is a $\PP^1 \times \PP^1$-bundle over the smooth
  locus of $\sing(D)$.

  We look at the effective cone of $D^+$. Tensoring the Koszul complex
  above with $\cO_{\PP(V) \times \PP(\AAA)}(x,y)$, for some $(x,y) \in
  \ZZ^2$, we see $\rH^0(\cO_{D^+}(x\fl^+ + y \fh))=0$ if $y < (1-n)x$
  or if $y<0$. So the effective cone of $D^+$ is spanned
  over $\QQ$ by $\fl^+$ and ${}^\tra \fl^+$. Therefore, an effective
  divisor on $D^+$ is of the form $x\fl^+ + y \fh$, with:
  \begin{equation}
    \label{condizioni}
   (x,y) \in \ZZ \times \NN, \qquad
    \mbox{and:} \qquad
    y \ge (1-n)x.
  \end{equation}
  We compute:
  \[
    \fl^+ \cdot \fh^{n-3}=  {}^\tra\fl^+ \cdot \fh^{n-3}={n \choose
      2}, \qquad \fh^{n-2}=n.
  \]

  Choose now $[\rf] \in \PGL(\AAA)$ such that the determinant $D_\rf$ of
  the matrix $M_\rf$  satisfies $D_\rf=D$.
  Then the coherent sheaf $\cL_\rf=\coker(M_\rf)$ is a rank-one reflexive
  sheaf over $D$, 
  actually an Ulrich sheaf. Similarly we get a second Ulrich sheaf of
  rank $1$ as:
  \[
    {}^\tra \cL_\rf = \coker\left({}^\tra M_\rf : V^*\otimes
      \cO_{\PP(\AAA)}(-1) \to U^*\otimes \cO_{\PP(\AAA)}\right);
    \qquad \mbox{we have:} \qquad     \cL_\rf(1-n)^* \simeq {}^\tra \cL_\rf.
  \]
  Each of these sheaves determines an element of $\End(\AAA)$ up to
  $G$-action arising as the minimal presentation matrix of the module
  of global sections of the sheaf, in some basis.
  
  Next, note that a non-zero global section of $\cL=\cL_\rf$ vanishes along the Weil
  divisor $B$ of $D$ consisting of matrices of size $n\times (n-1)$ that
  have rank at most $n-2$. This pulls-back via $\sigma^+$ to an
  effective divisor of $D^+$ 
  of the form $x\fl^+ + y \fh$, for some $(x,y) \in \ZZ \times
  \NN$. The degree of $B$ 
  in $\PP(\AAA)$ is ${n \choose 2}$ so:
  \[
    (x\fl^+ + y \fh)\cdot \fh^{n-2}={n \choose 2}, \qquad \mbox{so:}
    \qquad y=\frac{(n-1)(1-x)}{2}.
  \]
  
Together with \eqref{condizioni}, this gives two possibilities for
$(x,y)$, namely either $(x,y)=(1,0)$, in which
case the divisor class is $\fl^+$, or $(x,y)=(-1,n-1)$ so that the divisor
class is ${}^\tra\fl^+$.

In turn, $|\fl^+|$ gives the rational projection $D \dashrightarrow
\PP(V)$ and the sheaf $\cL=\sigma^+_*(\cO_{D^+}(\fl^+))$,
while $|{}^\tra \fl^+|=|\fl^-|$ gives $D \dashrightarrow \PP(U^*)$,
and the sheaf ${}^\tra \cL=\sigma^-_*(\cO_{D^-}(\fl^-))$,
the indeterminacies of these maps being resolved by
$\pi^+:D \dashrightarrow \PP(V)$
and $\pi^-:D \dashrightarrow \PP(U^*)$ and simultaneously over $\hat
D$.
The two possible divisors of degree ${n \choose 2}$ give 
precisely two points in the fibre of $\udet$ over $D$.
\end{proof}

\subsection{The Hilbert scheme}

Our next goal is to prove that $\Psi$ is a local isomorphism.
To do this, we consider a further space, which we
denote by $\fH_n$.
This is defined as the Hilbert scheme of subschemes of $\PP(\AAA)$
having the same Hilbert polynomial as $\sing(D)$.
Given any $\rf \in \PGL(\AAA)$, the minors of order $(n-1)$ of the
matrix $M_\rf$ cut a subscheme lying in $\fH_n$, so the assignment $[\rf] \mapsto
Z_\rf = \sing(D_\rf)$ defines a morphism:
$$\Xi : \PGL(\AAA) \to \fH_n,$$
whose image we denote by $\fH_n^\circ$.

Given $Z = \sing(D_\rf) \in \fH_n^\circ$, the ideal homogeneous ideal
$I_Z$ is minimally generated by the $n^2$ minors of degree $n-1$ and
the kernel of this set of generators (namely the first syzygy) determines the module $T_D$ up to
isomorphism. After sheafification, this yields a morphism $\rsyz :
\fH_n^\circ \to \fM_n$. Summing up, we get a different factorization of $\Psi$ as:

\[
\xymatrix{
\PGL(\AAA)/G  \ar[rd]_{\Xi} \ar[rr]^\Psi & & \fM_n\\
& \fH_n^\circ \ar[ur]_{\rsyz}
}
\]

\begin{Proposition} \label{Psi}
  The morphisms $\rsyz$ and $\Psi$ are submersions.
\end{Proposition}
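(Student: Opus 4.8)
The plan is to compute the differentials of $\rsyz$ and $\Psi$ on Zariski tangent spaces and to check that they are surjective; as the sources involved are smooth, that is what "submersion" amounts to here. Since $\cT_D$ is simple by Lemma \ref{simple lemma}, near $[\cT_D]$ one has $T_{[\cT_D]}\fM_n\simeq\Ext^1_{\PP(\AAA)}(\cT_D,\cT_D)$, while $T_{[Z]}\fH_n\simeq\Hom_{\PP(\AAA)}(\cI_Z,\cO_Z)$ for $Z=\sing(D_\rf)$. So the heart of the argument is to prove surjectivity of
\[
\mathrm{d}\rsyz\colon\Hom_{\PP(\AAA)}(\cI_Z,\cO_Z)\longrightarrow\Ext^1_{\PP(\AAA)}(\cT_D,\cT_D).
\]

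First I would record the vanishings $\Ext^1(\cT_D,\cO_{\PP(\AAA)})=\Ext^1(\cI_Z,\cO_{\PP(\AAA)})=0$ and $\rH^1_*(\cT_D)=\rH^1_*(\cI_Z)=0$; all four are immediate from the cohomology computation in the proof of Lemma \ref{simple lemma} (the first two being $\rH^{N-1}(\cT_D(-N-1))^*$ and $\rH^{N-1}(\cI_Z(-N-1))^*$ by Serre duality, with $N-1$ outside the exceptional ranges $\{0,N-2,N\}$ and $\{0,N-3,N\}$ since $N=n^2-1\ge 8$). Given a first order deformation $\widetilde\cT$ of $\cT_D$ over $\kk[\epsilon]$, the obstruction to lifting the inclusion $\cT_D\hookrightarrow\AAA\otimes\cO_{\PP(\AAA)}$ of \eqref{syz} to an $\cO_{\PP(\AAA)}\otimes\kk[\epsilon]$-linear map $\widetilde\cT\to\AAA\otimes\cO_{\PP(\AAA)}\otimes\kk[\epsilon]$ is its Kodaira--Spencer class pushed into $\Ext^1(\cT_D,\AAA\otimes\cO_{\PP(\AAA)})=\AAA\otimes\Ext^1(\cT_D,\cO_{\PP(\AAA)})=0$; the lift exists, is injective, and its cokernel $\widetilde\cJ$ is $\kk[\epsilon]$-flat restricting to $\cI_Z(n-1)$. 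Likewise the canonical inclusion $\cI_Z(n-1)\hookrightarrow\cO_{\PP(\AAA)}(n-1)$ lifts to $\widetilde\cJ\hookrightarrow\cO_{\PP(\AAA)}(n-1)\otimes\kk[\epsilon]$, the obstruction lying in $\Ext^1(\cI_Z,\cO_{\PP(\AAA)})=0$; hence $\widetilde\cJ$ is the $(n-1)$-twist of the ideal sheaf of a $\kk[\epsilon]$-flat subscheme $\widetilde Z$, which by flatness has the Hilbert polynomial of $Z$ and so defines a tangent vector to $\fH_n$ at $[Z]$. Finally $\rH^1_*(\cI_Z)=0$ gives $\rH^0(\cI_{\widetilde Z}(n-1))=\AAA\otimes\kk[\epsilon]$ minimally generating $\cI_{\widetilde Z}$ in degree $n-1$, while $\rH^1_*(\cT_D)=0$ identifies the evaluation map of these generators with the quotient $\AAA\otimes\cO_{\PP(\AAA)}\otimes\kk[\epsilon]\twoheadrightarrow\widetilde\cJ$ just produced; therefore $\rsyz(\widetilde Z)=\widetilde\cT$. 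This shows $\mathrm{d}\rsyz$ is surjective, and the construction read backwards shows it is in fact an isomorphism.

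For $\Psi=\rsyz\circ\Xi$: the morphism $\Xi$ is the orbit morphism for the $\PGL(\AAA)$-action on $\fH_n$ (it sends $[\rf]$ to the translate $\rf\cdot Z=\sing(D_\rf)$), so it identifies $\fH_n^\circ$ with the orbit $\PGL(\AAA)\cdot[Z]$, a smooth locally closed subvariety of dimension $\dim\PGL(\AAA)-\dim G=(n^2-1)^2$ (the stabiliser of $[Z]$ has identity component $G$, the transpose accounting only for a finite part, compatibly with Proposition \ref{2:1}). Since $Z$ is a standard determinantal scheme of maximal codimension $4$, the Hilbert scheme $\fH_n$ is smooth at $[Z]$ of dimension $(n^2-1)^2$ by Kleppe--Mir\'o-Roig \cite{kleppe-miro-roig:dimension}; equivalently, every first order deformation of $Z$ is determinantal, so $\mathrm{d}\Xi$ is surjective onto $T_{[Z]}\fH_n$ and $\fH_n^\circ$ is open in $\fH_n$. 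Combined with the surjectivity of $\mathrm{d}\rsyz$, and since two sided multiplication by $G$ does not alter $[\cT_{D_\rf}]$, this gives that $\mathrm{d}\Psi=\mathrm{d}\rsyz\circ\mathrm{d}\Xi$ is surjective at $[\ri]$; $\PGL(\AAA)$-equivariance then propagates surjectivity over the whole of $\PGL(\AAA)/G$.

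The step I expect to be the real obstacle is the surjectivity of $\mathrm{d}\rsyz$, and within it the verification that the lifted cokernel $\widetilde\cJ$ is genuinely the twisted ideal sheaf of an honest $\kk[\epsilon]$-flat family and that its evaluation map coincides with the quotient produced by the lifting, so that the composite really returns the deformation $\widetilde\cT$ one began with; this is precisely where the codimension $4$ Cohen--Macaulayness of $\sing(D)$ and the exact vanishing ranges of $\rH^*_*(\cT_D)$ and $\rH^*_*(\cI_Z)$ from Lemma \ref{simple lemma} are indispensable. The input from \cite{kleppe-miro-roig:dimension} (equivalently, a direct computation that $\dim\Hom(\cI_Z,\cO_Z)=(n^2-1)^2$) is a secondary but necessary ingredient in the passage from $\rsyz$ to $\Psi$.
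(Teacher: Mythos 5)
Your treatment of $\rsyz$ is essentially sound: the $\kk[\epsilon]$-lifting argument (lift $\cT_D\hookrightarrow\AAA\otimes\cO_{\PP(\AAA)}$ using $\Ext^1(\cT_D,\cO_{\PP(\AAA)})=0$, then lift $\cI_Z(n-1)\hookrightarrow\cO_{\PP(\AAA)}(n-1)$ using $\Ext^1(\cI_Z,\cO_{\PP(\AAA)})=0$, then match evaluation maps via $\rH^1_*(\cI_Z)=\rH^1_*(\cT_D)=0$) is a hands-on unwinding of what the paper does more compactly by Yoneda pairing with the extension class $\xi$ of \eqref{syz}: the vanishings \eqref{vanishingI} and \eqref{vanishingII} give natural isomorphisms $\wedge\xi\colon\Ext^1_{\PP(\AAA)}(\cI_Z,\cI_Z)\to\Ext^2_{\PP(\AAA)}(\cI_Z(n-1),\cT_D)$ and $\wedge\xi\colon\Ext^1_{\PP(\AAA)}(\cT_D,\cT_D)\to\Ext^2_{\PP(\AAA)}(\cI_Z(n-1),\cT_D)$, whose comparison is identified with $\mathrm{d}\rsyz$. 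Same vanishing inputs from Lemma \ref{simple lemma}, equivalent content, different packaging; either version is acceptable for this half.

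The gap is in the second half, where you need $\mathrm{d}\Xi$ to be surjective (and since you observe $\mathrm{d}\rsyz$ is an isomorphism, surjectivity of $\mathrm{d}\Psi$ stands or falls with this). You justify it by calling $Z=\sing(D)$ a ``standard determinantal scheme of maximal codimension $4$'' and invoking \cite{kleppe-miro-roig:dimension}. But $Z$ is cut out by the \emph{submaximal} minors of order $n-1$ of the $n\times n$ generic matrix; standard determinantal schemes are those defined by the \emph{maximal} minors of a $t\times(t+c-1)$ matrix, so the smoothness/dimension results of \cite{kleppe-miro-roig:dimension} (used in this paper only for a Hom computation on the surface $S$) do not cover $Z$, and your key claims --- that $\fH_n$ is smooth at $[Z]$ of dimension $(n^2-1)^2$, equivalently that every first-order deformation of $Z$ is determinantal, equivalently that the $\PGL(\AAA)$-orbit of $[Z]$ is open in $\fH_n$ --- are left unproved. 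This is exactly the point the paper settles by citing \cite[Corollary 7.6]{kleppe-miro2020:deformation} on deformations of (not necessarily standard) determinantal schemes, or alternatively the explicit computation of the differential in \cite[Lemma 4.1]{reichstein-vistoli:determinantal}; absent such an input, or a direct computation that $\dim\Hom_{\PP(\AAA)}(\cI_Z,\cO_Z)=(n^2-1)^2$, your argument establishes that $\rsyz$ is a submersion but not that $\Psi$ is.
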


\begin{proof}
Up to an appropriate change of coordinates, it is enough to prove
the statement at the point $\ri$ associated to the tautological matrix
of indeterminates $(x_{i,j})_{1 \le i,j \le n}$, as mentioned before.
We consider the sequence \eqref{syz} and denote by $\xi \in
\Ext^1_{\PP(\AAA)}(\cI_Z(n-1),\cT_D)$ the extension corresponding to that sequence.
We proved in Lemma \ref{simple lemma}
that $\Ext^1_{\PP(\AAA)}(\cI_Z(n-1),\cO_{\PP(\AAA)})=0$ and
 $\Hom_{\PP(\AAA)}(\cI_Z(n-1),\cO_{\PP(\AAA)})=0$ so
$\Hom_{\PP(\AAA)}(\cT_D(n-1),\cO_{\PP(\AAA)})$ is
canonically identified with $\AAA^*$ and hence
$\cI_Z(n-1)$ is recovered as cokernel of the
dual evaluation:
\[
  \cT_D \to \Hom_{\PP(\AAA)}(\cT_D(n-1),\cO_{\PP(\AAA)})^* \otimes \cO_{\PP(\AAA)}.
\]

The upshot is that the map $\rsyz$ is injective, for the ideal sheaf
of the subscheme $Z \subset \PP(\AAA)$ is reconstructed by $\cT_D$.
Therefore $\rsyz$ is bijective as it is surjective by definition.

\medskip

Taking $\Hom_{\PP(\AAA)}(\cI_Z(n-1),-)$ of \eqref{syz} and using
\eqref{vanishingI} gives a natural isomorphism:
\[
  \wedge \xi : \Ext^1_{\PP(\AAA)}(\cI_Z,\cI_Z) \to \Ext^2_{\PP(\AAA)}(\cI_Z(n-1),\cT_D).
\]

Next we observe that, applying $\Hom_{\PP(\AAA)}(-,\cT_D)$ to
\eqref{syz} and using \eqref{vanishingII} we get a natural isomorphism:
\[
  \wedge \xi :  \Ext^1_{\PP(\AAA)}(\cT_D,\cT_D) \to \Ext^2_{\PP(\AAA)}(\cI_Z(n-1),\cT_D).
\]
We get an isomorphism $\Ext^1_{\PP(\AAA)}(\cI_Z,\cI_Z) \to
\Ext^1_{\PP(\AAA)}(\cT_D,\cT_D)$ induced by $\wedge \xi$ and $(\wedge \xi)^{(-1)}$
which corresponds to the differential of $\rsyz$ at the point $Z$ of
$\fH_n^\circ$. We have showed that $\rsyz$ is a submersion.
Finally, we use that $\Xi$ is a submersion, see
\cite[Corollary 7.6]{kleppe-miro2020:deformation}.
 This can also be obtained by explicit computation of the
  differential as in \cite[Lemma 4.1]{reichstein-vistoli:determinantal}.
Summing up, we proved that $\Psi$ is also a submersion.
\end{proof}

\subsection{Proof of Theorem \ref{main:determinant-moduli}}

In order to prove Theorem \ref{main:determinant-moduli}, we show the
following more precise result.

\begin{Theorem}
  The map $\Phi$ is an open immersion onto a smooth affine piece of an
  irreducible component of $\fM_n$ of dimension $(n^2-1)^2$. The map
  $\udet$ is an étale $2:1$ cover onto $\fD_n^\circ$.
\end{Theorem}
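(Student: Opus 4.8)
The plan is to run through the two triangular factorizations $\Psi=\Phi\circ\udet=\rsyz\circ\Xi$ of \S\ref{section:familiesdeterminants}, upgrading the ``submersion'' statement of Proposition \ref{Psi}, the bijectivity of Proposition \ref{Torelli-det}, and the set-theoretic $2:1$ statement of Proposition \ref{2:1} into genuine étaleness and open immersions, and to read off the dimension from the source. First I would record that $\PGL(\AAA)/G$, with $G=\SL(U)\times\SL(V)$, is a smooth irreducible variety of dimension $\dim\PGL(\AAA)-\dim G=(n^4-1)-2(n^2-1)=(n^2-1)^2$, and that it is affine since $\SL(U)\times\SL(V)$ is reductive (Matsushima's criterion, and its extension to positive characteristic). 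Note that $\Xi$ descends to a morphism $\bar\Xi:\PGL(\AAA)/G\to\fH_n^\circ$, because the two-sided $G$-action fixes $D_\rf$ and hence $\sing(D_\rf)$. By the diagrams it then suffices to prove that $\bar\Xi$ is a finite étale double cover, that $\rsyz$ is an open immersion, and finally to transport these facts along the Torelli identifications to $\Phi$ and $\udet$.

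For $\bar\Xi$: the morphism $\Xi:\PGL(\AAA)\to\fH_n$ is a submersion by \cite[Corollary 7.6]{kleppe-miro2020:deformation} (equivalently \cite[Lemma 4.1]{reichstein-vistoli:determinantal}), as already used in the proof of Proposition \ref{Psi}. Its fibre over $Z_\rf$ consists of those $\rf'$ whose matrix $M_{\rf'}$ has the same ideal of $(n-1)$-minors as $M_\rf$; since $M_{{}^\tra\rf}={}^\tra M_\rf$ has the same minors, and since by Frobenius' theorem (Proposition \ref{2:1}) the two orbits $G\cdot\rf$ and $G\cdot{}^\tra\rf$ exhaust this fibre, the fibre is a disjoint union of two $G$-orbits of dimension $2(n^2-1)$. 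Hence $\fH_n^\circ=\im(\Xi)$ is smooth (smoothness descends along the smooth surjection $\Xi$), irreducible, of dimension $(n^4-1)-2(n^2-1)=(n^2-1)^2$, and the induced $\bar\Xi$ is quasi-finite of degree two with differential a surjection between tangent spaces of equal dimension $(n^2-1)^2$, hence an isomorphism; so $\bar\Xi$ is étale, and being étale, quasi-finite of constant degree two, and surjective onto the normal variety $\fH_n^\circ$, it is finite. In particular $\fH_n^\circ$, the image of an affine variety under a finite morphism, is affine.

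For $\rsyz$: by the proof of Proposition \ref{Psi}, $\rsyz$ is bijective and its differential at $Z=Z_\rf$ is the isomorphism $\Ext^1_{\PP(\AAA)}(\cI_Z,\cI_Z)\to\Ext^1_{\PP(\AAA)}(\cT_{D_\rf},\cT_{D_\rf})$ obtained from $\wedge\xi$ and $(\wedge\xi)^{-1}$; in particular $\rsyz$ is unramified. Since $\cT_{D_\rf}$ is simple (Lemma \ref{simple lemma}), $T_{[\cT_{D_\rf}]}\fM_n=\Ext^1_{\PP(\AAA)}(\cT_{D_\rf},\cT_{D_\rf})$, which therefore has dimension $\dim T_Z\fH_n^\circ=(n^2-1)^2$; on the other hand $\rsyz$ injective forces the irreducible set $\im(\rsyz)$ to have dimension $\dim\fH_n^\circ=(n^2-1)^2$, so $\dim_{[\cT_{D_\rf}]}\fM_n\ge(n^2-1)^2$, whence $\dim_{[\cT_{D_\rf}]}\fM_n=\dim T_{[\cT_{D_\rf}]}\fM_n=(n^2-1)^2$ and $\fM_n$ is smooth of dimension $(n^2-1)^2$ at every point of $\im(\rsyz)$. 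Thus $\rsyz$ is a morphism of smooth varieties of equal dimension with everywhere bijective differential, hence étale (miracle flatness), hence an open immersion onto an open subscheme of the unique irreducible component $\fM_n^\circ$ of $\fM_n$ meeting $\im(\rsyz)$; this component has dimension $(n^2-1)^2$, and $\im(\rsyz)\cong\fH_n^\circ$ is a smooth affine open piece of it. Composing, $\Psi=\rsyz\circ\bar\Xi$ is, onto its image $\im(\rsyz)$, a finite étale double cover, which is Theorem \ref{main:determinant-moduli}.

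Finally I would transfer to $\Phi$ and $\udet$. The assignment $D_\rf\mapsto\sing(D_\rf)$ defines a morphism $\delta:\fD_n^\circ\to\fH_n^\circ$ with $\delta\circ\udet=\bar\Xi$ and $\rsyz\circ\delta=\Phi$; it is surjective by definition and injective by Proposition \ref{Torelli-det}, and it admits an inverse morphism, namely the reconstruction of $D$ from its singular locus via the determinantal Fitting ideal and $(n-1)$-secant variety described in the proof of Proposition \ref{Torelli-det}, which is algebraic in families; hence $\delta$ is an isomorphism. Therefore $\Phi=\rsyz\circ\delta$ is an open immersion onto the smooth affine open piece $\im(\rsyz)$ of the component $\fM_n^\circ$ of dimension $(n^2-1)^2$, and $\udet=\delta^{-1}\circ\bar\Xi$ is a finite étale double cover onto $\fD_n^\circ$, in agreement with the independent étaleness result of \cite{reichstein-vistoli:determinantal}. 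I expect the main obstacle to be the middle step: passing from ``bijective with isomorphic differential'' for $\rsyz$ to ``open immersion'' requires knowing that $\fM_n$ is genuinely smooth at $[\cT_{D_\rf}]$ rather than just that our family is smooth, which I handle by the dimension squeeze above (circumventing a direct proof that $\Ext^2_{\PP(\AAA)}(\cT_{D_\rf},\cT_{D_\rf})$ vanishes); a secondary subtlety is promoting the set-theoretic bijection $\delta$ to a scheme isomorphism, for which one must check that the reconstruction of $D$ from $\sing(D)$ is indeed a morphism of schemes.
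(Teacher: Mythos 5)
Your proposal is correct and follows essentially the same route as the paper: it combines Proposition \ref{Torelli-det}, Proposition \ref{2:1} and Proposition \ref{Psi} via the same factorizations of $\Psi$, and obtains affineness from the affineness of $\PGL(\AAA)/G$ (the paper passes directly to the free $\ZZ/2\ZZ$-quotient, you pass through the finite étale cover $\bar\Xi$ and Chevalley's theorem). The additional details you supply — the dimension squeeze giving smoothness of $\fM_n$ along the image of $\rsyz$, the finiteness of the degree-two cover, and the identification of the fibres of $\Xi$ with pairs of $G$-orbits — are precisely the points the paper leaves implicit in its appeal to ``submersion plus injectivity,'' and you handle them correctly.
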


\begin{proof}
In order to prove this, in view of Proposition \ref{Torelli-det} and
Proposition \ref{2:1} it suffices to show that $\Psi$ is a
submersion and that the image of $\Psi$ is affine.
The first statement is proved in Proposition \ref{Psi}. The fact that
the image of $\Psi$ is affine follows from the fact that 
$\PGL(\AAA)/G$ is affine, as $\PGL(\AAA)$ is affine
and $\PGL(\AAA)/G$ is the spectrum of the ring of
$G$-invariants of the coordinate ring of $\PGL(\AAA)$,
see \cite[Chapter IV]{humphreys:linear_algebraic_groups}.
So the image of $\Psi$ is affine as well as it is the quotient of
$\PGL(\AAA)/G$ by the free $\ZZ/2\ZZ$-action given by transposition.
\end{proof}


\begin{thebibliography}{BEGvB2009}

\bibitem[AFV2016]{abe-faenzi-valles}
{\sc Takuro Abe, Daniele Faenzi, and Jean Vall\`es}, \emph{Logarithmic bundles
  of deformed {W}eyl arrangements of type {$A_2$}}, Bull. Soc. Math. France
  \textbf{144} (2016), no.~4, 745--761.

\bibitem[Ang2014]{angelini}
{\sc Elena Angelini}, \emph{Logarithmic bundles of hypersurface arrangements in
  {$\bold{P}^n$}}, Collect. Math. \textbf{65} (2014), no.~3, 285--302.

\bibitem[BEGvB2009]{buchweitz-ebeling-von_bothmer}
{\sc Ragnar-Olaf Buchweitz, Wolfgang Ebeling, and Hans-Christian Graf~von
  Bothmer}, \emph{Low-dimensional singularities with free divisors as
  discriminants}, J. Algebraic Geom. \textbf{18} (2009), no.~2, 371--406.

\bibitem[BGL2014]{bermudez-garibaldi-larsen}
{\sc Hernando Bermudez, Skip Garibaldi, and Victor Larsen}, \emph{Linear
  preservers and representations with a 1-dimensional ring of invariants},
  Trans. Amer. Math. Soc. \textbf{366} (2014), no.~9, 4755--4780.

\bibitem[BK1990]{bondal-kapranov:homogeneous}
{\sc Alexei~I. Bondal and Mikhail~M. Kapranov}, \emph{Homogeneous bundles},
  Helices and vector bundles, London Math. Soc. Lecture Note Ser., vol. 148,
  Cambridge Univ. Press, Cambridge, 1990, pp.~45--55.

\bibitem[BV1988]{bruns-vetter:LNM}
{\sc Winfried Bruns and Udo Vetter}, \emph{Determinantal rings}, Lecture Notes
  in Math., vol. 1327, Springer-Verlag, Berlin, 1988.

\bibitem[Del1970]{deligne:equations}
{\sc Pierre Deligne}, \emph{\'{E}quations diff\'erentielles \`a points
  singuliers r\'eguliers}, Lecture Notes in Math., Vol. 163,
  Springer-Verlag, Berlin, 1970.

\bibitem[Die1949]{dieudonne:orthogonal-4}
{\sc Jean Dieudonn\'{e}}, \emph{Sur une g\'{e}n\'{e}ralisation du groupe
  orthogonal \`a quatre variables}, Arch. Math. \textbf{1} (1949), 282--287.

\bibitem[Dim2017]{dimca:jacobian-syzygies}
{\sc Alexandru Dimca}, \emph{Jacobian syzygies, stable reflexive sheaves, and
  {T}orelli properties for projective hypersurfaces with isolated
  singularities}, Algebr. Geom. \textbf{4} (2017), no.~3, 290--303.

\bibitem[{Dim}2019]{dimca:versality-bounds}
{\sc Alexandru {Dimca}}, \emph{{Versality, bounds of global Tjurina numbers and
  logarithmic vector fields along hypersurfaces with isolated singularities}}, Singularities and their interaction with geometry and low dimensional topology, 
Trends Math. (2021), 1--12.

\bibitem[DK1993]{dolgachev-kapranov:arrangements}
{\sc Igor~V. Dolgachev and Mikhail~M. Kapranov}, \emph{Arrangements of
  hyperplanes and vector bundles on {$\mathbb P\sp n$}}, Duke Math. J.
  \textbf{71} (1993), no.~3, 633--664.

\bibitem[dPW2001]{du_Plessis-Wall:discriminants}
{\sc Andrew~A. du~Plessis and C.~T.~C. Wall}, \emph{Discriminants, vector
  fields and singular hypersurfaces}, New developments in singularity theory
  ({C}ambridge, 2000), NATO Sci. Ser. II Math. Phys. Chem., vol.~21, Kluwer
  Acad. Publ., Dordrecht, 2001, pp.~351--377.

\bibitem[FMV2013]{faenzi-matei-valles}
{\sc Daniele Faenzi, Daniel Matei, and Jean Vall{\`e}s}, \emph{Hyperplane
  arrangements of {T}orelli type}, Compos. Math. \textbf{149} (2013), no.~2,
  309--332.

\bibitem[{Fro}1897]{frobenius:gruppen}
{\sc Georg {Frobenius}}, \emph{{Ueber die Darstellung der endlichen Gruppen
  durch lineare Substitutionen}}, {Berl. Ber.} \textbf{1897} (1897), 994--1015
  (German).

\bibitem[GN1972]{gulliksen-negard}
{\sc Tor~Holtedahl Gulliksen and Odd~Guttorm Neg\.{a}rd}, \emph{Un complexe
  r\'{e}solvant pour certains id\'{e}aux d\'{e}terminantiels}, C. R. Acad. Sci.
  Paris S\'{e}r. A-B \textbf{274} (1972), A16--A18.

\bibitem[GT1977]{goto-tachibana}
{\sc Shiro Goto and Sadao Tachibana}, \emph{A complex associated with a
  symmetric matrix}, J. Math. Kyoto Univ. \textbf{17} (1977), no.~1, 51--54.

\bibitem[Hil1998]{hille:homogeneous}
{\sc Lutz Hille}, \emph{Homogeneous vector bundles and {K}oszul algebras},
  Math. Nachr. \textbf{191} (1998), 189--195.

\bibitem[HL1997]{huybrechts-lehn:moduli}
{\sc Daniel Huybrechts and Manfred Lehn}, \emph{The geometry of moduli spaces
  of sheaves}, Aspects of Mathematics, E31, Friedr. Vieweg \& Sohn,
  Braunschweig, 1997.

\bibitem[Hum1975]{humphreys:linear_algebraic_groups}
{\sc James~E. Humphreys}, \emph{Linear algebraic groups}, Springer-Verlag, New
  York-Heidelberg, 1975, Graduate Texts in Mathematics, No. 21.

\bibitem[J{\'{o}}z1978]{jozefiak:symmetric}
{\sc Tadeusz J{\'{o}}zefiak}, \emph{Ideals generated by minors of a symmetric
  matrix}, Comment. Math. Helv. \textbf{53} (1978), no.~4, 595--607.

\bibitem[Kin1994]{king:moduli}
{\sc Alastair King}, \emph{Moduli of representations of finite-dimensional
  algebras}, Quart. J. Math. Oxford Ser. (2) \textbf{45} (1994), no.~180,
  515--530.

\bibitem[KMR2005]{kleppe-miro-roig:dimension}
{\sc Jan~O. Kleppe and Rosa~Maria Mir{\'o}-Roig}, \emph{Dimension of families
  of determinantal schemes}, Trans. Amer. Math. Soc. \textbf{357} (2005),
  no.~7, 2871--2907 (electronic).

\bibitem[KMR2020]{kleppe-miro2020:deformation}
{\sc Jan~O. Kleppe and Rosa~M. Miró-Roig}, \emph{Deformation and
  unobstructedness of determinantal schemes}, To appear in Mem. Amer. Math.
  Soc. (2020).

\bibitem[Maa2005]{maakestad:principal}
{\sc Helge Maakestad}, \emph{A note on principal parts on projective space and
  linear representations}, Proc. Amer. Math. Soc. \textbf{133} (2005), no.~2,
  349--355.

\bibitem[MM1959]{marcus-moyls:linear}
{\sc Marvin Marcus and Benjamin~N. Moyls}, \emph{Linear transformations on
  algebras of matrices}, Canad. J. Math. \textbf{11} (1959), 61--66.

\bibitem[OR2006]{ottaviani-rubei:quiver}
{\sc Giorgio Ottaviani and Elena Rubei}, \emph{Quivers and the cohomology of
  homogeneous vector bundles}, Duke Math. J. \textbf{132} (2006), no.~3,
  459--508.

\bibitem[OT1992]{orlik-terao:arrangements-hyperplanes}
{\sc Peter Orlik and Hiroaki Terao}, \emph{Arrangements of hyperplanes},
  Grundlehren der Mathematischen Wissenschaften [Fundamental Principles of
  Mathematical Sciences], vol. 300, Springer-Verlag, Berlin, 1992.

\bibitem[Re2012]{re:principal}
{\sc Riccardo Re}, \emph{Principal parts bundles on projective spaces and
  quiver representations}, Rend. Circ. Mat. Palermo (2) \textbf{61} (2012),
  no.~2, 179--198.

\bibitem[RV2017]{reichstein-vistoli:determinantal}
{\sc Zinovy Reichstein and Angelo Vistoli}, \emph{On the dimension of the locus
  of determinantal hypersurfaces}, Canad. Math. Bull. \textbf{60} (2017),
  no.~3, 613--630.

\bibitem[Sai1980]{saito:logarithmic}
{\sc Kyoji Saito}, \emph{Theory of logarithmic differential forms and
  logarithmic vector fields}, J. Fac. Sci. Univ. Tokyo Sect. IA Math.
  \textbf{27} (1980), no.~2, 265--291.

\bibitem[Ser2006]{sernesi:deformations}
{\sc Edoardo Sernesi}, \emph{Deformations of algebraic schemes}, Grundlehren
  der Mathematischen Wissenschaften, vol. 334, Springer-Verlag, Berlin, 2006.

\bibitem[Ser2014]{sernesi:jacobian-documenta}
{\sc \bysame}, \emph{The local cohomology of the {J}acobian ring}, Doc. Math.
  \textbf{19} (2014), 541--565.

\bibitem[Ter1981]{terao}
{\sc Hiroaki Terao}, \emph{Generalized exponents of a free arrangement of
  hyperplanes and {S}hepherd-{T}odd-{B}rieskorn formula}, Invent. Math.
  \textbf{63} (1981), no.~1, 159--179.

\bibitem[Wan2015]{wang:jacobian}
{\sc Zhenjian Wang}, \emph{On homogeneous polynomials determined by their
  {J}acobian ideal}, Manuscripta Math. \textbf{146} (2015), no.~3-4, 559--574.

\bibitem[Wat1995]{waterhouse:automorphisms-basic}
{\sc William~C. Waterhouse}, \emph{Automorphism group schemes of basic matrix
  invariants}, Trans. Amer. Math. Soc. \textbf{347} (1995), no.~10, 3859--3872.

\bibitem[Wey2003]{weyman:tract}
{\sc Jerzy Weyman}, \emph{Cohomology of vector bundles and syzygies}, Cambridge
  Tracts in Mathematics, vol. 149, Cambridge University Press, Cambridge, 2003.

\end{thebibliography}


\def\cprime{$'$} \def\cprime{$'$} \def\cprime{$'$} \def\cprime{$'$}
  \def\cprime{$'$} \def\cprime{$'$} \def\cprime{$'$} \def\cprime{$'$}
  \def\cprime{$'$}
\providecommand{\bysame}{\leavevmode\hbox to3em{\hrulefill}\thinspace}
\providecommand{\MR}{\relax\ifhmode\unskip\space\fi MR }
\providecommand{\MRhref}[2]{%
  \href{http://www.ams.org/mathscinet-getitem?mr=#1}{#2}
}
\providecommand{\href}[2]{#2}

\end{document}